\pgfplotsset{compat=1.18} 
\numberwithin{equation}{section}
\theoremstyle{plain}
\newtheorem{theorem}{Theorem}[section]
\newtheorem{prop}[theorem]{Proposition}
\newtheorem{question}[theorem]{Question}
\newtheorem*{question*}{Question}
\theoremstyle{definition}
\newtheorem{defn}[theorem]{Definition}
\newtheorem{remark}{Remark}
\newcommand{\natls}{{\mathbb N}}
\newcommand{\C}{\mathbb{C}}
\newcommand{\Z}{\mathbb{Z}}
\newcommand{\D}{\mathbb{D}}
\newcommand{\cD}{\mathcal{D}}
\newcommand{\cH}{\mathcal{H}}
\newcommand{\cT}{\mathcal{T}}
\newcommand{\disk}{\mathbb{D}}
\DeclareMathOperator{\PSL}{PSL}
\DeclareMathOperator{\Int}{Int}
\numberwithin{figure}{section}
\DeclareFontFamily{U}{tipa}{}
\DeclareFontShape{U}{tipa}{m}{n}{<->tipa10}{}
\newcommand{\arc@char}{{\usefont{U}{tipa}{m}{n}\symbol{62}}}
\newcommand{\arc}[1]{\mathpalette\arc@arc{#1}}
\newcommand{\arc@arc}[2]{%
  \sbox0{$\m@th#1#2$}%
  \vbox{
    \hbox{\resizebox{\wd0}{\height}{\arc@char}}
    \nointerlineskip
    \box0
  }%
}
\def\leftrightarrowsfill@{\arrowfill@\leftrarrows\Rrelbar\lrightarrows}
\newcommand{\xleftrightarrows}[2][]{\ext@arrow 3399\leftrightarrowsfill@{#1}{#2}}
\author[M.~Mj]{Mahan Mj}
\address{School of Mathematics, Tata Institute of Fundamental Research, 1 Homi Bhabha Road, Mumbai 400005, India}
\email{mahan@math.tifr.res.in, mahan.mj@gmail.com}
\author[S.~Mukherjee]{Sabyasachi Mukherjee}
\address{School of Mathematics, Tata Institute of Fundamental Research, 1 Homi Bhabha Road, Mumbai 400005, India}
\email{sabya@math.tifr.res.in,mukherjee.sabya86@gmail.com}
\thanks{Both authors were  supported by  the Department of Atomic Energy, Government of India, under project no.12-R\&D-TFR-5.01-0500 as also  by an endowment of the Infosys Foundation. MM was also supported in part by a DST JC Bose Fellowship. SM was supported in part by SERB research project grant MTR/2022/000248.} 
\date{\today}
\begin{document}

\title[Simultaneous Uniformization and Algebraic Correspondences]{Simultaneous Uniformization and\\ Algebraic Correspondences}

\begin{abstract}
We prove a generalization of  Bers' simultaneous uniformization theorem in the world of algebraic correspondences. More precisely, we construct algebraic correspondences that simultaneously uniformize a pair of non-homeomorphic genus zero orbifolds. We also present a complex-analytic realization of the Teichm{\"u}ller space of a punctured sphere in the space of correspondences.
\end{abstract}

\maketitle

\setcounter{tocdepth}{1}
\tableofcontents

\section{Introduction}\label{intro_sec}

Algebraic correspondences on the Riemann sphere $\widehat{\C}$, viewed as dynamical systems, include rational dynamics and actions of Kleinian groups. For the purposes of this paper, an \emph{algebraic correspondence} on $\widehat{\C}$ is a finite-to-finite multi-valued map with holomorphic local branches. Equivalently, let $\pi_1$ and $\pi_2$ be the two canonical projections from $\widehat{\mathbb{C}} \times \widehat{\mathbb{C}}$ to the first and second factors respectively, and $Z$ be a  codimension $1$  subvariety in $\widehat{\mathbb{C}} \times \widehat{\mathbb{C}}$ containing no fibers of $\pi_1$ and $\pi_2$; then $\pi_2 \circ \pi_1^{-1}$ is said to be an algebraic correspondence from the first to the second factor. Algebraic correspondences, viewed as multi-valued maps, can be iterated in the obvious way (cf. \cite{BP01}).


That rational dynamics and actions of Kleinian groups can be unified in the framework of correspondences was observed by Fatou in the 1920s \cite{Fatou29}. Bullett and Penrose constructed the first examples of correspondences that combine the dynamics of quadratic rational maps and the modular group \cite{BP94,BL20}.

\subsection*{Virtual orbit equivalence, and a combination framework}
 An \emph{orbit equivalence} framework for combining/mating Fuchsian groups (or surfaces) with polynomials was developed in \cite{MM1}. The key idea of \cite{MM1} was to replace a Fuchsian group with a piecewise M{\"o}bius circle map (called a \emph{mateable map}) that retains some of the key features of the group (namely, it is orbit equivalent to the group) and is compatible with polynomial dynamics (that is to say, it is topologically conjugate to $z^d$, for some $d\geq 2$). Using the notion of mateable maps, combination theorems for these objects were proved.   Principal examples of mateable maps associated with Fuchsian groups are given by \emph{Bowen-Series} (Section~\ref{sec-bs}) and \emph{higher Bowen-Series} (Section~\ref{sec-hbs}) maps of Fuchsian punctured sphere groups.

The above framework was extended to \emph{virtual orbit equivalences} in \cite{MM2}. More precisely, one looks at a finite index subgroup $\Gamma_0'$  of the original Fuchsian group $\Gamma_0$, and shows that the Bowen-Series map of $\Gamma_0'$ (which is orbit equivalent to $\Gamma_0'$) acting on the circle admits a factor dynamical system, called the \emph{factor Bowen-Series map}. The restriction of a factor Bowen-Series map to the circle is topologically conjugate to $z^d$, for some $d\geq 2$, and hence it can be conformally mated with complex polynomials. 
Algebraic descriptions of these conformal matings were also given (initially under a real-symmetry assumption, which was later dropped in \cite{LLM24}). This was used to construct correspondences on (possibly nodal) Riemann spheres. These correspondences are matings of complex polynomials and genus zero orbifolds.

The above construction was used in \cite{MM2} to construct holomorphic embeddings of \emph{Bers slices} of genus zero orbifolds in the space of algebraic correspondences such that the correspondences are matings of the corresponding surfaces and the polynomial $z^d$.

\subsection*{Statement of the main theorem}
In this article, we show that this mating framework also has purely Teichm{\"u}ller-theoretic consequences. Specifically, we establish the following generalization of the \emph{Bers' Simultaneous Uniformization Theorem} (cf. \cite{Ber60}).

\begin{theorem}\label{simult_unif_corr_thm}
Let $\Sigma_1, \Sigma_2$ be (possibly non-homeomorphic) hyperbolic orbifolds of genus zero with arbitrarily many (at least one) punctures, at most one order two orbifold point, and at most one order $\nu\geq 3$ orbifold point with $d(\Sigma_1)=d(\Sigma_2)$.
Then, there exists an algebraic correspondence $\mathfrak{C}$ (on a possibly noded Riemann surface) which acts via conformal automorphisms on its regular set $\Omega(\mathfrak{C})$, and the quotient $\Omega(\mathfrak{C})/\mathfrak{C}$ is biholomorphic to the disjoint union of $\Sigma_1$ and $\Sigma_2$.
\end{theorem}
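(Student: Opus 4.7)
My plan is to adapt the mating paradigm of \cite{MM1,MM2,LLM24}, replacing the polynomial side of those constructions with a second genus zero orbifold. I would associate to each $\Sigma_i$ its higher Bowen-Series map $F_i$, a piecewise Fuchsian Markov self-map of the ideal boundary of a uniformizing disc $\mathbb{D}_i$. The hypothesis $d(\Sigma_1)=d(\Sigma_2)=:d$ ensures that $F_1$ and $F_2$ have the same topological degree, so that the two boundary circles can be identified by a homeomorphism that conjugates $F_1$ to $F_2$ and matches their Markov branches.

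Given this identification, the core of the argument is a conformal mating. I would glue $\overline{\mathbb{D}_1}$ and $\overline{\mathbb{D}_2}$ along their boundaries by the above conjugacy, producing a topological sphere equipped with a piecewise M{\"o}bius expanding map $G$. To put a complex structure on this sphere for which $G$ is holomorphic, I would perform a David surgery, extending the boundary conjugacy to a David homeomorphism of the whole sphere and pulling back the standard complex structure. David, rather than quasiconformal, regularity is forced because the parabolic cusps at the punctures destroy quasi-symmetry of the boundary conjugacy; the relevant integrability condition should be verifiable by the same kind of modulus estimates used for mating orbifolds with $z^d$ in \cite{MM2,LLM24}.

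This produces a (possibly noded) Riemann sphere on which $G$ is piecewise holomorphic, with two invariant Jordan domains carrying the M{\"o}bius action of the respective uniformizing groups $\Gamma_1,\Gamma_2$. Algebraization then proceeds as in \cite{MM1,MM2,LLM24}: the branches of $G$, being local inverses of elements of $\Gamma_1$ or $\Gamma_2$, patch together into an algebraic correspondence $\mathfrak{C}$ because each such M{\"o}bius element extends globally to the ambient sphere. By construction the regular set $\Omega(\mathfrak{C})$ is the union of the two Jordan domains, $\mathfrak{C}$ acts by conformal automorphisms on each, and the orbit equivalence on the $i$-th domain coincides with that of $\Gamma_i$ on $\mathbb{D}_i$; hence $\Omega(\mathfrak{C})/\mathfrak{C}\cong\Sigma_1\sqcup\Sigma_2$.

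I expect the main obstacle to be the combinatorial and analytic matching of the two Markov structures. Because $\Sigma_1,\Sigma_2$ are allowed to be non-homeomorphic, the Markov partitions of $F_1,F_2$ differ as labelled partitions, so the boundary gluing has to be chosen carefully to conjugate $F_1$ to $F_2$ on the nose while simultaneously admitting a David extension with the required regularity near the countably many (pre-)parabolic points. Once this combinatorial reconciliation is in place, the algebraization step and the identification of the quotient should be essentially formal consequences of the framework already developed in the cited works.
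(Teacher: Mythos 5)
Your proposal misidentifies the class of boundary maps and, more seriously, glosses over the algebraization step in a way that does not actually yield a correspondence.

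\textbf{Wrong choice of boundary map.} You propose to use the \emph{higher} Bowen--Series maps of $\Sigma_1,\Sigma_2$. Higher Bowen--Series maps, as constructed in Section~\ref{sec-hbs}, are defined only for punctured-sphere groups (no orbifold points) and have degree $d^2$ on $\mathbb{S}^1$. The hypothesis in the theorem, however, involves orbifolds in $\mathfrak{F}$ with orbifold points, and the degree invariant $d(\Sigma)$ is defined through the \emph{factor} Bowen--Series map $A^{\mathrm{fBS}}_\Sigma$ of Subsection~\ref{fbs_sec}. The paper's proof mates $A^{\mathrm{fBS}}_{\Sigma_1}$ with $A^{\mathrm{fBS}}_{\Sigma_2}$. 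This is not a cosmetic difference: the factor Bowen--Series map has the decisive structural feature that it restricts to an orientation-reversing involution of $\partial\mathcal{H}$, which is what makes the conformal mating a B-involution (Proposition~\ref{conf_mating_b_inv_prop}). Indeed, the concluding remark of Section~\ref{bers_sec} records explicitly that the paper does \emph{not} know how to algebraize matings involving higher Bowen--Series maps; so the route you sketch stalls precisely at the step you declare ``essentially formal.''

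\textbf{The algebraization step is not what you describe.} You claim that after the David surgery, ``the branches of $G$, being local inverses of elements of $\Gamma_1$ or $\Gamma_2$, patch together into an algebraic correspondence because each such M\"obius element extends globally.'' This is not how the correspondence arises, and as stated it would not produce one. After straightening the complex structure, the local branches of the mating $F$ are no longer M\"obius (they are conformal conjugates of M\"obius maps by a David homeomorphism), and they do not individually extend to the sphere. Instead one must first exhibit $F$ as a B-involution $F=R\circ\eta\circ(R|_{\mathfrak{D}})^{-1}$ for a rational map $R$ and a M\"obius involution $\eta$ (this uses \cite[\S 16]{LLM24} and the involutive boundary behavior of the factor Bowen--Series map), and the correspondence is then defined by the algebraic equation
\[
\frac{R(\mathfrak{u}_2)-R(\eta(\mathfrak{u}_1))}{\mathfrak{u}_2-\eta(\mathfrak{u}_1)}=0,
\]
lifted through the branched cover $R$. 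Crucially, the regular set of $\mathfrak{C}$ is not ``the union of the two Jordan domains''; it is the \emph{preimage} $R^{-1}(\mathfrak{X}_1(\D))\sqcup R^{-1}(\mathfrak{X}_2(\D))$, and the group actions on these pieces are generated by $\eta$ together with the deck transformations of $R$, not by the original M\"obius pieces. Your sketch also does not explain where the nodes come from: they appear exactly when $\gcd(p_1,p_2)>1$, in which case $\cD$ is disconnected and the correspondence lives on a quotient of $\bigsqcup_j\widehat{\C}_j$ by a finite gluing relation along $\partial\mathfrak{D}_j$. Without the B-involution/rational-map structure, none of this can even be formulated.
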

\noindent (See Subsection~\ref{fbs_sec} for the definition of $d(\Sigma)$.)
\smallskip

A word about the general framework in which Theorem~\ref{simult_unif_corr_thm} belongs is in order. It is clear that a Kleinian group with a (symmetric) generating set consisting of $g_1, \cdots, g_k$ is equivalent to the data of a reducible polynomial $P(x,y)=\Pi_i (y-g_i(x))$. The codimension $1$ variety $\{P(x,y)=0\}$ gives rise to an algebraic correspondence, whose branches are given by $g_1, \cdots, g_k$. 

However, the framework of correspondences is  manifestly more flexible; it includes general rational functions and dynamical systems exhibiting features of rational maps and Fuchsian groups simultaneously (cf. \cite{MM2,LLM24}). In particular, uniqueness of a correspondence $\mathfrak{C}$ combining two orbifolds $\Sigma_1, \Sigma_2$ in the sense of Theorem~\ref{simult_unif_corr_thm} is too much to hope for (see the higher Bowen-Series maps in \cite{MM1} for instance).

In the special case  where $\Sigma_1, \Sigma_2$ are \emph{homeomorphic} hyperbolic orbifolds satisfying the hypotheses of Theorem~\ref{simult_unif_corr_thm}, the theorem reduces to  Bers' Simultaneous Uniformization Theorem (by producing a generating set for the quasi-Fuchsian group that simultaneously uniformizes $\Sigma_1, \Sigma_2$). It is in this sense that  Theorem~\ref{simult_unif_corr_thm} is a generalization.

\subsection*{A sketch of the proof}
Let us now briefly mention the main ideas that go into the proof of Theorem~\ref{simult_unif_corr_thm}. The key source of flexibility in the proof is the virtual orbit equivalence mating framework mentioned above. Specifically, this mating framework does not require the surfaces $\Sigma_1$ and $\Sigma_2$, appearing in Theorem~\ref{simult_unif_corr_thm}, to be homeomorphic; it is enough to assume that the factor Bowen-Series maps associated with $\Sigma_1$ and $\Sigma_2$ have equal degree. With this condition in place, we proceed to construct the conformal mating of the associated factor Bowen-Series maps (see Proposition~\ref{conf_mating_exists_prop}). 

We then inspect the conformal mating $F$ described in the previous paragraph on the boundary of its natural domain of definition, and observe that it is a \emph{boundary involution}; i.e., an orientation-reversing involution on a piecewise real-analytic curve (see Proposition~\ref{conf_mating_b_inv_prop}).
Using an algebraic characterization of such meromorphic maps derived in \cite{MM2,LLM24}, we write the conformal mating as an explicit algebraic function. Concretely, the conformal mating $F$ can be modeled by the standard involution $\eta(z)=1/z$ via a rational map $R$ of the Riemann sphere.  The rest of the proof of Theorem~\ref{simult_unif_corr_thm} follows the strategy of \cite{MM2}. The algebraic description of $F$ allows us to promote it to an algebraic correspondence, which is defined as compositions of $\eta$ and the local deck transformations of $R$. Finally, arguments similar to the ones used in \cite{MM2} show that the correspondence uniformizes the surfaces $\Sigma_1, \Sigma_2$ simultaneously, in a suitable sense (see Section~\ref{sec-proof_mainthm} for details).

\subsection*{Organization of the paper}
The paper is organized as follows.
Section~\ref{bs_hbs_fbs_sec} surveys the orbit equivalence and virtual orbit equivalence mating framework between genus zero orbifolds and polynomial dynamics leading up to the construction of algebraic correspondences arising as combinations of the corresponding Fuchsian groups and polynomials. In Section~\ref{qf_corr_sec}, we prove one of the main new results of this article. In particular, we show that the virtual orbit equivalence mating framework is `less demanding' in that it allows one to manufacture a semi-global complex-analytic map of the Riemann sphere that combines a pair of topologically nonequivalent genus zero orbifolds. Here we also characterize this \emph{conformal mating} as an explicit algebraic function, and then globalize it to obtain the desired algebraic correspondences. In the final Section~\ref{bers_sec}, we use Theorem~\ref{simult_unif_corr_thm} to construct a holomorphic embedding of the Teichm{\"u}ller space of a puncture sphere into the space of algebraic correspondences, each of which is generated by a M{\"o}bius involution and the local deck transformations of a rational map (see Theorem~\ref{bers_embedding_thm}).




\section{Mateable and virtually mateable maps}\label{bs_hbs_fbs_sec} This section surveys \cite{MM1} (particularly Sections 2--4 of that paper), where we introduced mateable maps and gave the first set of examples of such maps.
See \cite{MMsurvey} for a more detailed survey of this topic.

\subsection{Mateable maps}\label{sec-mateable}
\begin{defn}\label{defn-mateable}
Let $A:\mathbb{S}^1\to\mathbb{S}^1$ be a  continuous piecewise analytic map. Then $A$  is said to be a \emph{mateable} map corresponding to a Fuchsian group $\Gamma$ if the following hold:
	\begin{enumerate}[\upshape ({{M}}-1)]
		\item\label{mateable-oe} $A$ is orbit equivalent to $\Gamma$; i.e., the grand orbits of $A$ equal the orbits of $\Gamma$.
		\item\label{mateable-exp} $A$ is an expansive covering map of degree $d$ greater than one.
		\item\label{mateable-markov} $A$ is Markov; i.e., the maximal connected subsets of $\mathbb{S}^1$ on which $A$ is genuinely analytic give a Markov partition of $\mathbb{S}^1$ for $A$.
		\item\label{mateable-asym_hyp} No periodic break-point of $A$ is asymmetrically hyperbolic; i.e., at such break-points, the multipliers on the two sides need to be equal.
	\end{enumerate}
\end{defn}  

Here,
\begin{enumerate}
\item Condition~\ref{mateable-exp} is equivalent to saying that 
$A$ is topologically conjugate to the standard degree $d$ map
$z\mapsto z^d$ on $\mathbb{S}^1$.
\item Condition~\ref{mateable-oe} furnishes a rather soft \emph{dynamical} compatibility between the Fuchsian group $\Gamma$ and 
$z\mapsto z^d$. Indeed, since $z\mapsto z^d$ and $A$ are topologically  orbit-equivalent by the above observation, it follows from Condition~\ref{mateable-oe} that 
$z\mapsto z^d$ and $\Gamma$ have the same (grand) orbits after a topological change of coordinates.
\item Condition~\ref{mateable-markov} furnishes a \emph{combinatorial}  compatibility between the Fuchsian group $\Gamma$ and 
$z\mapsto z^d$ by demanding that the pieces of $A$ give a Markov partition for $z\mapsto z^d$ after a topological change of coordinates.
\item Condition~\ref{mateable-asym_hyp} ensures that locally
(at break-points) the multipliers on the left and right are consistent with the behavior of $z\mapsto z^d$.
\end{enumerate}

Thus, the conditions of Definition~\ref{defn-mateable}
impose minimalistic conditions for conformal mateability of $A$ and 
$z\mapsto z^d$. Surprisingly, it turns out that the conditions of Definition~\ref{defn-mateable} are sufficient as shown in \cite[Proposition~2.18]{MM1} (see below).
\medskip

\noindent {\bf Canonical extension and fundamental domain of a piecewise M\"obius map.}\\
	Let $A$ be a continuous 
	piecewise M\"obius map on the circle. Let $\D$ denote the unit disk.
	Let $J_1, \cdots, J_k$
	be the \emph{pieces} of $A$; i.e.,  $J_1, \cdots, J_k$ are a circularly ordered sequence of closed intervals with disjoint interiors such that
	\begin{enumerate}
 \item $\displaystyle\bigcup_{j=1}^k J_i = \mathbb{S}^1$,
		\item $J_j \cap J_{j+1} = \{x_{j+1}\}$ (we assume here that the indices are taken mod~$k$).
		\item $A |_{J_j} = g_j$.
	\end{enumerate}
	Let $\gamma_j$ be the bi-infinite hyperbolic geodesic in $\D$ (equipped with the standard hyperbolic metric) between $x_{j}, x_{j+1}$. Let $\mathcal{D}_j \subset \overline{\D}$ denote the closed region bounded by $J_j$ and $\gamma_j$. 
 \begin{defn}\label{defn-canextn}
     The \emph{canonical extension} of $A$, denoted by
	$\widehat{A}$,  
	is defined on $\cD:=\displaystyle\bigcup_{j=1}^k \mathcal{D}_j$ by $\widehat A = g_j$ on 
 $\mathcal{D}_j$.

 The set $\mathcal{D}$ is called the 
 \emph{canonical domain of definition} of $\widehat{A}$ in $\overline{\D}$.

 The open ideal polygon bounded by the bi-infinite hyperbolic geodesics $\gamma_j$ is called the
 \emph{fundamental domain} of the  piecewise M\"obius map $A$ and is denoted by $R$. \\
 \end{defn}

\noindent {\bf Polynomial dynamics.}\\
Now, let $P$ be 
 a complex polynomial of degree $d>1$ (for our purposes, the qualitative 
 features of $P$ will be similar to those of  $z \mapsto z^d$).

 \begin{defn}
      The \emph{filled Julia set} $\mathcal{K}(P)$ is defined to be the completely invariant set of all points whose forward orbits under $P$ are bounded. The polynomial $P$ is said to be \emph{hyperbolic} if all of its critical points converge to attracting cycles under forward iteration. 
 \end{defn}

 It is a classical fact of complex dynamics that the
 set of all hyperbolic polynomials of degree $d$ ($d>1$) is open in the parameter space. A connected component of such hyperbolic polynomials in the parameter space is called a \emph{hyperbolic component}. 
 
 Let $\cH_d$ denote the  hyperbolic component  containing the map $z \mapsto z^d$. We refer to  $\cH_d$ as the \emph{principal hyperbolic component}. 
 For any $f \in \cH_d$, the filled Julia set  is a (closed) quasidisk.
 Further,  the dynamics of $f$ on its Julia set is quasi-symmetrically conjugate to the action of $z \mapsto z^d$ on $\mathbb S^1$. Thus, the qualitative 
 features of $P$ are similar to those of  $z \mapsto z^d$.
 We are now ready to state the proposition that asserts that the conditions of Definition~\ref{defn-mateable} suffice. We refer the reader to \cite[\S 2.3]{MM1} for the definition of conformal mating.

\begin{prop}[Mateable maps are mateable]\cite[Proposition~2.18]{MM1}\label{pro-mateable}
	Let $A:\mathbb{S}^1\to\mathbb{S}^1$ be a mateable map of degree $d$ in the sense of Definition~\ref{defn-mateable}.
 Let $P\in\mathcal{H}_d$.
	Then, $\widehat{A}:\mathcal{D}\to\overline{\D}$ and $P:\mathcal{K}(P)\to\mathcal{K}(P)$ are conformally mateable. 
\end{prop}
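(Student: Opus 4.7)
The plan is to build the mating by a topological gluing followed by quasiconformal surgery, concluding with the Measurable Riemann Mapping Theorem. As a first step, I would invoke condition (M-2) to obtain a topological conjugacy $h:(\bS^1,z^d)\to(\bS^1,A)$, and use $P\in\cH_d$ to obtain a quasi-symmetric conjugacy $\phi:(\bS^1,z^d)\to(\boundary\cK(P),P|_{\boundary\cK(P)})$ coming from B\"ottcher coordinates at infinity. Setting $\psi:=\phi\circ h^{-1}$, I form the topological sphere $\widehat\Sigma:=\overline\D\sqcup_\psi\cK(P)$ by identifying $x\in\bS^1$ with $\psi(x)\in\boundary\cK(P)$, and define a continuous partial map $F:\cD\cup\cK(P)\to\widehat\Sigma$ by $F=\widehat A$ on $\cD$ and $F=P$ on $\cK(P)$. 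Continuity along the seam $\bS^1$ is exactly the conjugacy property of $\psi$.

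The heart of the argument is to show that $\psi$ is globally quasi-symmetric, so that the gluing can be promoted to a quasi-conformal one. Away from the break-points $x_j$, the Markov property (M-3) together with expansivity (M-2) presents $A$ on each piece $J_j$ as a uniformly expanding real-analytic Markov system combinatorially equivalent to the restriction of $z\mapsto z^d$ to a corresponding piece; a standard bounded-distortion argument then gives a quasi-symmetric conjugacy on each piece. Globalizing across a break-point $x_j$ requires compatibility of the local scaling from the two sides, and this is exactly what condition (M-4) delivers: at a periodic break-point the two one-sided multipliers of $A$ coincide, so the dilatation of $h$ remains bounded when crossing $x_j$, and spreading this control along pre-images via the Markov structure upgrades $\psi$ to a globally quasi-symmetric homeomorphism.

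Given global quasi-symmetry, I would extend $\psi$ (for instance by Douady--Earle) to a quasi-conformal homeomorphism in a collar annulus of $\bS^1$ inside $\overline\D$ and modify $F$ there to produce a globally defined quasi-regular map $\widetilde F:\widehat\Sigma\to\widehat\Sigma$. Starting from the standard complex structure away from the collar, the $\widetilde F$-pullbacks of the Beltrami coefficient of the interpolation accumulate only on the grand orbit of the seam, and expansion of both $\widehat A$ and $P$ keeps the dilatation uniformly bounded; this yields an $\widetilde F$-invariant Beltrami coefficient on $\widehat\Sigma$. Straightening by the Measurable Riemann Mapping Theorem produces a uniformization $\Phi:\widehat\Sigma\to\widehat\C$, and $\Phi\circ\widetilde F\circ\Phi^{-1}$ is the sought-for conformal mating of $\widehat A$ with $P$. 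The main obstacle is precisely the global quasi-symmetry of $\psi$ at the break-points, which makes condition (M-4) indispensable; once this is in hand, the remaining surgery is standard.
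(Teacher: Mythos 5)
Your overall architecture --- glue $\overline{\D}$ to $\cK(P)$ along the seam, define the map piecewise, and straighten --- matches the paper's strategy. But there is a genuine gap in the middle step, and it is exactly the step you flag as ``the heart of the argument.''

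You claim that (M-4) lets you upgrade the conjugacy $h:(\bS^1,z^d)\to(\bS^1,A)$ (hence $\psi$) to a \emph{globally quasi-symmetric} homeomorphism, so that the seam can be absorbed by a quasiconformal collar and the resulting invariant Beltrami coefficient is uniformly bounded, ready for the Measurable Riemann Mapping Theorem. This fails for the class of maps the definition is designed to cover. Condition (M-4) rules out \emph{asymmetrically hyperbolic} periodic break-points, but it does \emph{not} force the break-points to be hyperbolic; the principal examples (Bowen--Series and higher Bowen--Series maps of punctured-sphere groups) have \emph{parabolic} periodic break-points, corresponding to cusps. A parabolic fixed point of $A$ must be matched, under $h$, with a repelling fixed point of $z^d$. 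No quasi-symmetric circle homeomorphism can conjugate a parabolic germ to a hyperbolic one: the local scaling ratios near the parabolic point degenerate, so the cross-ratio distortion of $h$ is unbounded. Consequently $\psi$ is not quasi-symmetric, the Douady--Earle (or any) extension is not quasiconformal, and the Beltrami coefficient you build is not in $L^\infty$ --- the Measurable Riemann Mapping Theorem is simply not available.

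The paper circumvents this by working in the \emph{David} category rather than the quasiconformal one. Conditions (M-2)--(M-4) are exactly what is needed (via the LMMN extension theorem) to conclude that $h$ extends to a \emph{David} homeomorphism of $\overline{\D}$, i.e., a homeomorphism whose dilatation is unbounded but has subexponential distribution. The invariant Beltrami coefficient on the glued sphere is then a David coefficient, and one integrates it using the \emph{David Integrability Theorem} in place of MRMT. Finally, conformality of the straightened map across the limit curve, and uniqueness of the mating, come from \emph{conformal removability of David circles}, not from quasiconformal removability of quasicircles. So the missing idea is precisely the replacement of quasi-symmetry/quasiconformality by the David framework at parabolic break-points; without it, your argument only covers the (non-generic for this paper) case in which all periodic break-points are symmetrically hyperbolic.
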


\subsection{Bowen-Series Maps}\label{sec-bs}
It remains to furnish examples of mateable maps in the sense of Definition~\ref{defn-mateable}. The first examples of mateable maps come from Bowen-Series maps of Fuchsian groups corresponding to punctured spheres. We briefly recall this, and refer the reader to \cite[Section 3.2]{MMsurvey} for further details.

Let $\Sigma_d$ denote the $(d+1)-$punctured sphere. We construct a specific $2d-$sided ideal polygon in the unit disk symmetric about the $x-$axis. Let $1=z_0, \cdots, z_d=-1$ denote the $2d$-th roots of unity on the upper semi-circle arranged counter-clockwise. The vertices of the ideal $2d$-gon are given by $z_0,\cdots, z_d, \overline{z_1}, \cdots, \overline{z_{d-1}}$. The side-pairing transformations take the edge (bi-infinite geodesic) between $\overline{z_i}, \overline{z_{i+1}}$ to the edge (bi-infinite geodesic) between $z_i, z_{i+1}$ for all the middle edges; i.e., $i=1,\cdots,d-2$. The edge between $z_0, \overline{z_1}$ is taken to the edge between 
$z_0, z_{1}$. Similarly, the edge between  $\overline{z_{d-1}}$ and $z_{d}$
is taken to the edge between $z_{d-1}$ and $z_{d}$. See \cite[Figure 2]{MMsurvey} for a diagram illustrating this situation. Let $\sigma_1,\cdots, \sigma_{d}$ denote the associated M{\"o}bius transformations on the unit disk $\D$.
The associated Bowen-Series map $A_{\mathrm{BS},d}:\mathbb{S}^1 \to \mathbb{S}^1$ is the piecewise analytic map defined by
\begin{enumerate}
    \item $\sigma_i^{-1}$ on the arc joining $z_{i-1}$ and $z_{i}$ for $i=1,\cdots, d$,
    \item $\sigma_i$ on the arc joining $\overline{z_{i-1}}$ and $\overline{z_{i}}$ for $i=1,\cdots, d$.
\end{enumerate}
Let $\Gamma_0$ be the group generated by $\sigma_1,\cdots, \sigma_{d}$. For any marked group $\Gamma\in\mathrm{Teich}(\Sigma_d)$ (where $\mathrm{Teich}(\Sigma_d)$ stands for the Teichm\"uller space of $\Sigma_d$), the Bowen-Series map $A_{\mathrm{BS},\Gamma}:\mathbb{S}^1\to\mathbb{S}^1$ is defined as the conjugate of $A_{\mathrm{BS},d}:\mathbb{S}^1 \to \mathbb{S}^1$ by the quasiconformal homeomorphism that conjugates the marked group $\Gamma_0$ to the marked group $\Gamma$.

The following summarizes the properties of the above Bowen-Series maps.

\begin{theorem}\cite[Proposition~3.3, Theorem~3.7]{MM1}\label{thm-BSmateable}
Suppose that $d\geq 2$, so that $\Sigma_d$ has at least 3 punctures. Then for any marked group $\Gamma\in\mathrm{Teich}(\Sigma_d)$, the Bowen-Series map
$A_{\mathrm{BS},\Gamma}:\mathbb{S}^1 \to \mathbb{S}^1$ is a degree $2d-1$ mateable map in the sense of Definition~\ref{defn-mateable}. In particular,  $A_{\mathrm{BS},\Gamma}:\mathbb{S}^1 \to \mathbb{S}^1$ is orbit equivalent to the Fuchsian group $\Gamma$.

 Let $P\in\mathcal{H}_{2d-1}$.
 Then, the canonical extension (cf.\ Definition~\ref{defn-canextn}) $\widehat{A}_{\mathrm{BS},\Gamma}:\mathcal{D}_{A_{\mathrm{BS},\Gamma}}\to\overline{\disk}$ and $P:\mathcal{K}(P)\to\mathcal{K}(P)$ are conformally mateable.
\end{theorem}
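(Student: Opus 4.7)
The plan is to verify the four mateability axioms (M-\ref{mateable-oe})--(M-\ref{mateable-asym_hyp}) of Definition~\ref{defn-mateable} for $A_{\mathrm{BS},\Gamma}$ with $d=2d-1$, and then invoke Proposition~\ref{pro-mateable} to conclude. Since the map $A_{\mathrm{BS},\Gamma}$ is by definition a quasi-symmetric conjugate of the model $A_{\mathrm{BS},d}$, and all four axioms are invariant under quasi-symmetric conjugacy (the orbit structure, degree, Markov combinatorics, and parabolic nature of cusps are all preserved), it suffices to verify the axioms for $A_{\mathrm{BS},d}$ and the symmetric ideal $2d$-gon $R$.

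For axiom (M-\ref{mateable-oe}): by construction, each piece of $A_{\mathrm{BS},d}$ is a restriction of one of the generators $\sigma_i^{\pm 1}$ of $\Gamma_0$, so grand orbits of $A_{\mathrm{BS},d}$ are contained in $\Gamma_0$-orbits. The converse — that every $\Gamma_0$-orbit is realized as a grand orbit — is the content of the classical Bowen--Series construction and is proved by tracking a $\Gamma_0$-orbit through the tiling of $\disk$ by translates of $R$: the side-pairings dictate precisely the action of $A_{\mathrm{BS},d}$ at the boundary.

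For (M-\ref{mateable-exp}) and (M-\ref{mateable-markov}): the partition of $\mathbb{S}^1$ consists of $2d$ arcs whose endpoints are the vertices of $R$. Since each $\sigma_i^{\pm 1}$ sends the endpoints of its piece to other vertices of $R$ (this is a tautology from the side-pairing structure), the images $A_{\mathrm{BS},d}(J_j)$ are unions of pieces of the partition, yielding the Markov property. A direct count of how many pieces cover a generic point of $\mathbb{S}^1$ — with particular care at the boundary vertices $z_0=1$ and $z_d=-1$, where $\sigma_1$ and $\sigma_d$ are paired asymmetrically relative to the middle edges — shows that every point has exactly $2d-1$ preimages, so $A_{\mathrm{BS},d}$ has degree $2d-1$. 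Expansivity follows since the M\"obius pieces uniformly stretch on arcs, making $A_{\mathrm{BS},d}$ topologically conjugate to $z\mapsto z^{2d-1}$.

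For (M-\ref{mateable-asym_hyp}): the break points of $A_{\mathrm{BS},d}$ are precisely the vertices $z_i,\overline{z_i}$, which are ideal vertices of $R$ and hence cusps of $\Gamma_0$. At each such vertex two adjacent generators share a common parabolic fixed point, so the one-sided multipliers of $A_{\mathrm{BS},d}$ at these break points are both equal to $1$, ruling out asymmetric hyperbolicity. Once (M-\ref{mateable-oe})--(M-\ref{mateable-asym_hyp}) have been verified, Proposition~\ref{pro-mateable} applied with $d$ replaced by $2d-1$ immediately yields conformal mateability of $\widehat{A}_{\mathrm{BS},\Gamma}:\mathcal{D}_{A_{\mathrm{BS},\Gamma}}\to\overline{\disk}$ with any $P\in\mathcal{H}_{2d-1}$. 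The main obstacle is the careful bookkeeping at the end vertices $z_0,z_d$: the side-pairing combinatorics there differs from the generic middle-edge case, and one must check both that the degree is exactly $2d-1$ (not $2d$) and that the parabolic symmetry of multipliers is preserved despite this asymmetry in the polygon's pairing scheme.
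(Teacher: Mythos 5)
The paper does not actually give a proof of this theorem; it is cited verbatim from \cite[Proposition~3.3, Theorem~3.7]{MM1}. So your sketch is being compared against what that reference must establish, not against an in-paper argument. Your overall plan --- reduce to the model map $A_{\mathrm{BS},d}$ by quasi-symmetric conjugacy invariance, verify the four axioms of Definition~\ref{defn-mateable}, then feed the result into Proposition~\ref{pro-mateable} --- is the correct structure and is what one expects \cite{MM1} to carry out. Your treatment of (M-\ref{mateable-oe}) and (M-\ref{mateable-markov}), as well as your emphasis on the degree-count being $2d-1$ rather than $2d$, is appropriate.

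There are, however, two genuine flaws in the details. First, the expansivity argument is wrong: you claim ``the M\"obius pieces uniformly stretch on arcs,'' but at the ideal vertices of $R$ (which are cusps of $\Gamma_0$) the one-sided derivatives of the pieces equal~$1$, so $A_{\mathrm{BS},d}$ is emphatically \emph{not} uniformly expanding. You have conflated the topological condition of expansivity with metric expansion; in the presence of parabolic fixed points on $\mathbb{S}^1$, (M-\ref{mateable-exp}) must be obtained from the Markov structure and a separation argument (or from the topological conjugacy to $z\mapsto z^{2d-1}$, established independently), not from a uniform derivative bound. Second, the multiplier check (M-\ref{mateable-asym_hyp}) as you phrase it is incorrect: it is not true that at each vertex $z_i$ the ``two adjacent generators share a common parabolic fixed point'' --- the middle generators $\sigma_i$ need not fix $z_i$ at all. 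What is true, and what one must prove, is that the $A_{\mathrm{BS},d}$-periodic break-points trace out vertex cycles of the fundamental polygon, so the relevant first-return maps are products of generators that coincide with the (parabolic) cusp elements of $\Gamma_0$; the equality of the two one-sided multipliers of the \emph{first-return} map then follows. Your reasoning substitutes the derivatives of individual generators for the derivatives of first-return words, and this is where the argument, as written, does not go through.
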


\subsection{Higher Bowen-Series Maps}\label{sec-hbs}
We will now describe another class of mateable maps associated with Fuchsian punctured sphere groups.
\smallskip

\noindent {\bf Higher degree map without folding.}\\
Recall the notions of canonical extension and fundamental domain $R$ from Definition~\ref{defn-canextn}.
A \emph{diagonal} of $R$ is a bi-infinite geodesic in $R$ joining a pair of non-adjacent vertices in $R$. Let $A: \mathbb S^1 \to \mathbb S^1$ be
a	piecewise M\"obius  map with fundamental domain $R$. We say that $A$ has a \emph{diagonal fold} if there
	exist consecutive edges $\alpha_1, \alpha_2$ of $\partial R$
	and a diagonal $\delta$ of $R$ such that the canonical extension $\widehat{A}$ maps $\alpha_1, \alpha_2$  to
 $ \delta$. 
 
 Let $a_1, a_2$ and $ a_2, a_3$ be the endpoints of $\alpha_1$, $\alpha_2$ respectively. Also, let $p, q$ be the endpoints of $\delta$. Since  $\widehat{A}$ is continuous on $\cD$, it follows that
	$A(a_1)=p=A(a_3)$ and $A(a_2)=q$.

 \begin{defn}\label{def-hdm}
   Let $A$ be a  piecewise M\"obius map from $\mathbb S^1$ to itself. 
   $A$ is said to be a \emph{higher degree map without folding} if it satisfies the following.
	\begin{enumerate}
		\item There exists an  ideal polygon $R_0\subset R$ such that  the (cyclically ordered) edges 
		$\delta_1, \cdots, \delta_l$ of $R_0$ are   diagonals of $R$.  
		\item If $p$ is an ideal vertex of $R_0$, then it is fixed under $A$; i.e., $A(p)=p$.
		\item Every edge $\alpha$ of $R$ is mapped by $A$ to  one of the sides
		$\delta_1, \cdots, \delta_l$ of~$R_0$.
		\item $A$ has no diagonal folds. 
	\end{enumerate} 
	The ideal polygon $R_0$ is called the \emph{inner domain} of $A$.
 \end{defn}

We assume as usual the sides
$\alpha_1, \cdots, \alpha_k$ of $R$ are cyclically ordered. Then we observe that consecutive edges
$\alpha_i, \alpha_{i+1}$ are mapped to consecutive edges of the inner domain $R_0$. The ordering, may however, be reversed, i.e.\ clockwise cyclic ordering may go to
counterclockwise cyclic ordering and vice versa.
Thus, we obtain
a continuous map $\widehat{A}: \partial R \to \partial R_0$.
After adding on the ideal endpoints of $R$ and $R_0$, we note that $\widehat{A}: \partial R \to \partial R_0$ has a well-defined
degree $d$. Thus, any edge of $R_0$ has exactly $|d|$ pre-images (this is the place where we use the `no folds' hypothesis). 

\begin{defn}\label{def-degree}
    We refer to $|d| >1$ as the \emph{polygonal degree} of $A$.
\end{defn}

\noindent {\bf Higher Bowen-Series Maps.}\\
Next, we fix a regular ideal 2d-gon $W$ as in Section~\ref{sec-bs}. This will be the fundamental domain for a base Fuchsian group $\Gamma_0$ isomorphic to $\pi_1(\Sigma_d)$, where (recall) $\Sigma_d$ denotes a $(d+1)-$punctured sphere.
Concretely, assume that the ideal vertices of $W$ are 
 the $2d$-th roots of unity. Let the vertices of $W$ on the
 lower semi-circle be numbered $1=1_-$, $2_-$ $\cdots$, $(d+1)_-=d+1$ in counterclockwise order. Let the vertices of $W$ on the
upper semi-circle be numbered $1, 2, \cdots, d+1$ in clockwise order (see Figure~\ref{fig-hbs}, see also \cite[Figure 3]{MM1}).

As in Section~\ref{sec-bs}. the generators of $\Gamma_0$ are given by $\sigma_1, \cdots, \sigma_{d} $, where $\sigma_i$ takes the edge $\overline{i_- (i+1)_-}$ to the  edge $\overline{i (i+1)}$ (here on, for $a,b\in\mathbb{S}^1$, the bi-infinite hyperbolic geodesic in $\D$ with ideal endpoints at $a,b$ will be denoted by $\overline{ab}$).

\begin{figure}[ht!]
\captionsetup{width=0.98\linewidth}
\begin{tikzpicture}
\node[anchor=south west,inner sep=0] at (0,0) {\includegraphics[width=0.5\linewidth]{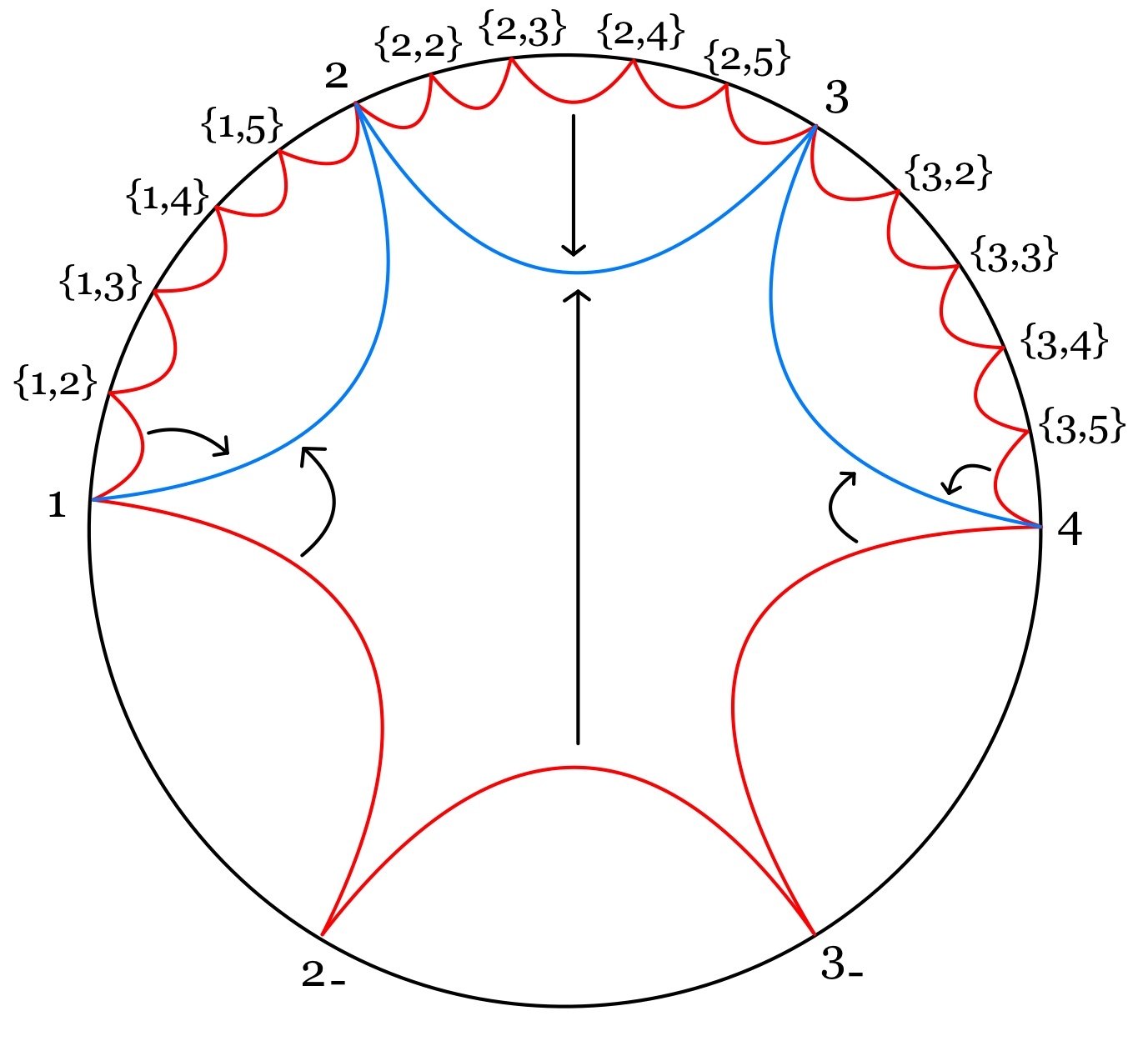}};
\node[anchor=south west,inner sep=0] at (7,0) {\includegraphics[width=0.44\linewidth]{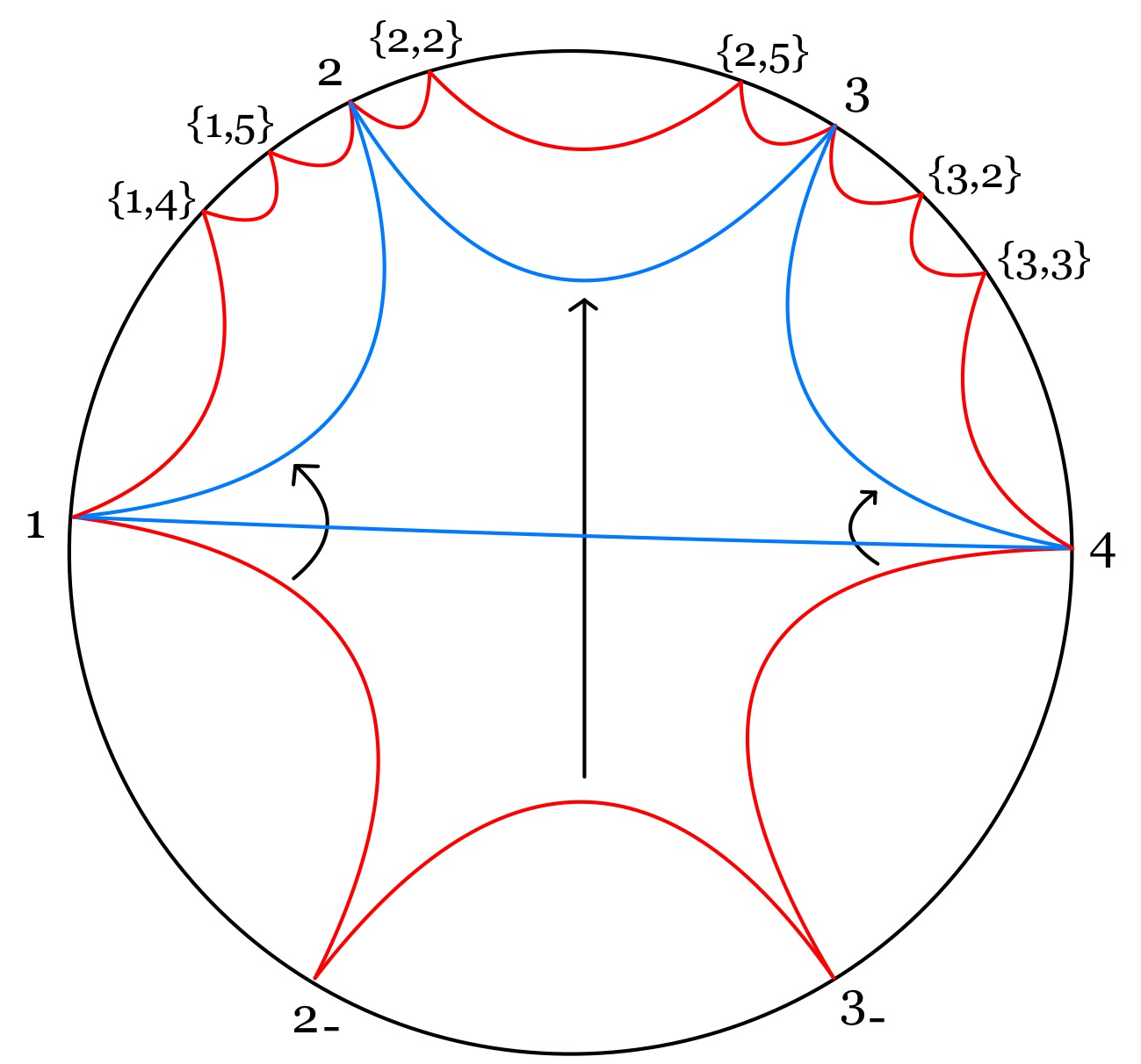}};

\node at (4.4,2.9) {\begin{small}$\sigma_3$\end{small}};
\node at (3.5,3) {\begin{small}$\sigma_2$\end{small}};
\node at (2.1,2.9) {\begin{small}$\sigma_1$\end{small}};
\node at (1.25,3.6) {\begin{small}$\sigma_1^{-1}$\end{small}};
\node at (3.5,4.8) {\begin{small}$\sigma_2^{-1}$\end{small}};
\node at (5.12,3.38) {\begin{small}$\sigma_3^{-1}$\end{small}};
\node at (10.94,2.75) {\begin{small}$\sigma_3$\end{small}};
\node at (10.1,3) {\begin{small}$\sigma_2$\end{small}};
\node at (8.84,2.78) {\begin{small}$\sigma_1$\end{small}};
\end{tikzpicture}
\caption{Fundamental domains for $A_{\Gamma_0, \mathrm{aux}}$ and $A_{\Gamma_0, \mathrm{hBS}}$: $4$ punctures.}
\label{fig-hbs}
\end{figure}

Next, we need to interpolate vertices between vertices $i, i+1$ on the
upper semi-circle. Between vertices $i, i+1$, we interpolate $2d$ vertices 
given by the vertices of $\sigma_i.W$. Note that $\sigma_i.W \cap W=\overline{i (i+1)}$. The resulting new vertices are labeled $\{i,2\}, \{i,3\}, \cdots,
	\{i,2d-1\}$ in clockwise order. 
 
We will now define an auxiliary piecewise M{\"o}bius map $A_{\Gamma_0, \mathrm{aux}}$ having 
$$
R=\Int{\left(W \cup \bigcup_{i=1}^d \sigma_i.W\right)}
$$ 
as its fundamental domain.
Note that $\overline{i (i+1)}$, $i=1,\cdots,d$, are diagonals of $R$.

For two ideal boundary points $a,b$ of $R$, let $\arc{ab}$ denote the maximal arc of $\mathbb S^1$ with endpoints at $a,b$ and no internal break-points; i.e., $\arc{ab}$ has endpoints at the break-points $a, b$, and there do not exist any other ideal boundary points of $R$ in the arc. On  $\arc{\ i_- (i+1)_-\ }$, define $$A_{\Gamma_0, \mathrm{aux}}=\sigma_i, \, i=1, \cdots, d.$$
 Note that $A_{\Gamma_0, \mathrm{aux}}$ maps 
 $\arc{\ i_- (i+1)_-\ }$ to the closure of the complement of  the arc $\arc{\ i (i+1)\ }$.

Next, for  $i=1, \cdots, d$, and on each of the $d$ short arcs $\arc{\ \{i,j\} \{i,j+1\}\ }$ for $i \leq j \leq i+d-1$ between $i, i+1$, define 
$$A_{\Gamma_0, \mathrm{aux}}=\sigma_i^{-1}.$$
Note that $A_{\Gamma_0, \mathrm{aux}}$ maps $\displaystyle\left(\bigcup_{j=i}^{i+d-1}\arc{\ \{i,j\} \{i,j+1\} }\right)$
to the entire top semicircle between $1$ and $d+1$. We are implicitly identifying $\{i,1\}$ with $i$ and $\{i, i+2d\}$ with $i+1$ here. Further, for $i \leq j \leq i+d-1$, $A_{\Gamma_0, \mathrm{aux}}$ maps the clockwise arc from $\{i,j\}$ to $\{i,j+1\}$ onto the clockwise arc from $j$ to $j+1$.

    For $i\in\{2,\cdots, d\}$ and $1\leq j\leq i-1$, let $j=i-s$, so that
	$1\leq s \leq i-1$. Define $$A_{\Gamma_0, \mathrm{aux}}=\sigma_s \circ \sigma_i^{-1}$$ on $\arc{\ \{i,j\} \{i,j+1\}\ }.$ Note that $A_{\Gamma_0, \mathrm{aux}}$ maps $\arc{\{i,j\} \{i,j+1\}}$ to the long arc from $s$ to $s+1$ in a counterclockwise sense.

For $i\in\{1,\cdots, d-1\}$ and $i+d \leq j\leq 2d-1$, let $j=i+d+t$. Thus, 
	$0\leq t \leq d-1-i$. Define $$A_{\Gamma_0, \mathrm{aux}}=\sigma_{d-t} \circ \sigma_i^{-1}$$
 on $\arc{\ \{i,j\} \{i,j+1\}\ }$. Hence, $A_{\Gamma_0, \mathrm{aux}}$ maps $\arc{ \{i,j\} \{i,j+1\}}$ to the long arc from $d-t$ to $d+1-t$
	in a counterclockwise sense.
 
We observe that
$A_{\Gamma_0, \mathrm{aux}}$ fixes the vertex $i$ for all $i=1,\cdots, d+1$.

 Define $A_{\Gamma_0, \mathrm{hBS}}$ to be the minimal piecewise M\"obius map equaling $A_{\Gamma_0, \mathrm{aux}}$ on $\mathbb S^1$. Thus,  $A_{\Gamma_0, \mathrm{hBS}}$ equals $A_{\Gamma_0, \mathrm{aux}}$ pointwise; however, all superfluous break-points have been removed in passing from $A_{\Gamma_0, \mathrm{aux}}$ 
 to $A_{\Gamma_0, \mathrm{hBS}}$.
 
 Let $\widehat{A}_{\Gamma_0, \mathrm{hBS}}$ be the canonical extension of $A_{\Gamma_0, \mathrm{hBS}}$. It is easy to check that $\widehat{A}_{\Gamma_0, \mathrm{hBS}}$ is a higher degree map without folding in the sense of Definition~\ref{def-hdm}. The inner polygon for this higher degree map without folding is the ideal polygon with vertices at $1, 2,\cdots, d+1$.

\begin{defn}\label{defn-hbs}
	We call the piecewise M{\"o}bius Markov map $A_{\Gamma_0, \mathrm{hBS}}$ the \emph{higher Bowen-Series map} of $\Gamma_0$ (associated with the fundamental domain $W$). For a marked group $\Gamma\in\mathrm{Teich}(\Sigma_d)$, the \emph{higher Bowen-Series map} $A_{\Gamma,\mathrm{hBS}}:\mathbb{S}^1\to\mathbb{S}^1$ is defined as the conjugate of $A_{\Gamma_0,\mathrm{hBS}}:\mathbb{S}^1 \to \mathbb{S}^1$ by the quasiconformal homeomorphism that conjugates the marked group $\Gamma_0$ to the marked group $\Gamma$.
\end{defn}

One of the main theorems of \cite{MM1} can now be summarized as follows:

\begin{theorem}\label{theorem-bshbsmateable}
   Let $A$ be a higher Bowen-Series map (in the sense of Definition~\ref{defn-hbs}) of a Fuchsian group uniformizing a punctured sphere. Then the canonical extension $\widehat{A}$ of $A$ 
   can be conformally mated with polynomials lying in the principal hyperbolic component of degree $d^2=\deg(A\vert_{\mathbb{S}^1})$.
\end{theorem}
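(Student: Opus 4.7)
The plan is to verify that $A_{\Gamma_0,\mathrm{hBS}}$ satisfies the four mateability conditions of Definition~\ref{defn-mateable} with degree $d^2$, and then to apply Proposition~\ref{pro-mateable}, which will immediately yield the conformal mating with any $P\in\mathcal{H}_{d^2}$. The general case $\Gamma\in\mathrm{Teich}(\Sigma_d)$ reduces to the base case $\Gamma_0$: by Definition~\ref{defn-hbs}, $A_{\Gamma,\mathrm{hBS}}$ is the quasiconformal conjugate of $A_{\Gamma_0,\mathrm{hBS}}$, and all four conditions are preserved under such conjugation. In particular, the equality of one-sided multipliers in~(M-\ref{mateable-asym_hyp}) persists at periodic break-points because the relevant germs remain parabolic, with multipliers equal to $1$ in every marking.

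Conditions~(M-\ref{mateable-oe}) and~(M-\ref{mateable-asym_hyp}) are relatively direct. For~(M-\ref{mateable-oe}), every branch of $A_{\Gamma_0,\mathrm{hBS}}$ is one of $\sigma_i$, $\sigma_i^{-1}$, $\sigma_s\circ\sigma_i^{-1}$, or $\sigma_{d-t}\circ\sigma_i^{-1}$, so grand $A$-orbits lie inside $\Gamma_0$-orbits; conversely, every generator $\sigma_i^{\pm 1}$ appears as a branch somewhere on $\mathbb{S}^1$, giving the reverse inclusion. For~(M-\ref{mateable-asym_hyp}), the only candidates for periodic break-points are among the vertices $1, 2, \ldots, d+1$ of the inner polygon $R_0$, each of which is fixed by $A$ (cf.\ Definition~\ref{def-hdm}); the one-sided germs of $A$ at such a vertex are parabolic M\"obius elements of $\Gamma_0$ fixing that vertex, so both one-sided multipliers equal $1$.

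The main work is the Markov condition~(M-\ref{mateable-markov}) and expansiveness of degree $d^2$ in~(M-\ref{mateable-exp}). For the Markov property I would trace through the branch formulas of Section~\ref{sec-hbs}: the long lower arc $\arc{i_- (i+1)_-}$ maps under $\sigma_i$ onto the complement of the top arc $\arc{i\,(i+1)}$, while each short upper arc $\arc{\{i,j\}\{i,j+1\}}$ maps --- according to whether its branch is $\sigma_i^{-1}$, $\sigma_s\circ\sigma_i^{-1}$, or $\sigma_{d-t}\circ\sigma_i^{-1}$ --- onto the top arc between $1$ and $d+1$, onto a long arc from $s$ to $s+1$, or onto a long arc from $d-t$ to $d+1-t$. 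One then checks that each target is an exact union of pieces of $A_{\Gamma_0,\mathrm{hBS}}$. For expansiveness, I would sum branch multiplicities to identify the circle degree as $d^2$, and then use that $\widehat{A}_{\Gamma_0,\mathrm{hBS}}$ is a higher degree map without folding (Definition~\ref{def-hdm}): the geodesic boundaries of pieces are mapped strictly inward toward $R_0$ by the canonical extension, which together with the standard parabolic expansion near the fixed vertices delivers the required topological expansion (equivalently, topological conjugacy to $z\mapsto z^{d^2}$). Proposition~\ref{pro-mateable} then supplies the conformal mating.

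The main obstacle will be the combinatorial bookkeeping underlying~(M-\ref{mateable-markov}): one must account for the passage from $A_{\Gamma_0,\mathrm{aux}}$ to its \emph{minimal} piecewise M\"obius form $A_{\Gamma_0,\mathrm{hBS}}$ --- which absorbs break-points where consecutive branches happen to agree --- and verify that target arcs still decompose exactly into the new coarser pieces. A related technical point is to confirm that the circle degree is precisely $d^2$, in agreement with the polygonal degree of $\widehat{A}_{\Gamma_0,\mathrm{hBS}}$ from Definition~\ref{def-degree} applied to the fundamental domain $R$.
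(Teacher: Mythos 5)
The paper does not actually prove Theorem~\ref{theorem-bshbsmateable}: it is stated as a summary of one of the main theorems of \cite{MM1}, with the citation standing in for the proof. So there is no internal argument to compare against. That said, your high-level strategy --- verify the four conditions of Definition~\ref{defn-mateable} for $A_{\Gamma_0,\mathrm{hBS}}$ (with degree $d^2$) and then invoke Proposition~\ref{pro-mateable}, reducing the general marked group to $\Gamma_0$ by quasiconformal conjugation --- is exactly the right skeleton and matches what \cite{MM1} carries out.

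The genuine gap is in your treatment of condition~(M-\ref{mateable-oe}). The forward containment is fine: since every branch of $A_{\Gamma_0,\mathrm{hBS}}$ is an element of $\Gamma_0$, any relation $A^m(x)=A^n(y)$ is of the form $g(x)=h(y)$ with $g,h\in\Gamma_0$, so grand $A$-orbits lie in $\Gamma_0$-orbits. But the reverse containment does not follow from ``every generator $\sigma_i^{\pm1}$ appears as a branch somewhere.'' What that observation gives you is that $y$ and $\sigma_i(y)$ are grand-orbit equivalent for $y$ in the \emph{specific} arc on which the branch is $\sigma_i$; it does not give $x\sim_A\sigma_i(x)$ for an arbitrary $x\in\mathbb{S}^1$. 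Trying to reduce to the good arc by iterating (write $A(x)=g(x)$ and replace $\sigma_i$ by $\sigma_i g^{-1}$) just trades one generator for a longer word, with no a priori reason to terminate. Establishing the reverse inclusion is in fact the substantive content of the orbit-equivalence theorems for (higher) Bowen--Series maps in \cite{MM1}; it requires the Markov coding and fundamental-domain combinatorics, not just the presence of each generator among the branches. As written, your proof of (M-\ref{mateable-oe}) asserts the hard direction in a single clause.

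Two smaller remarks. For~(M-\ref{mateable-asym_hyp}), you assert that the only candidates for periodic break-points are the fixed vertices $1,\dots,d+1$ of the inner polygon; this needs an argument (one must rule out that the interpolated vertices or the lower-semicircle vertices are periodic), although it is true and the conclusion --- parabolic germs on both sides, multipliers $1$ --- is correct once that is in place. For~(M-\ref{mateable-exp}) and~(M-\ref{mateable-markov}), you have honestly flagged the combinatorial bookkeeping (especially the passage from $A_{\Gamma_0,\mathrm{aux}}$ to the minimal map $A_{\Gamma_0,\mathrm{hBS}}$ and the degree count $d^2$) as the ``main work,'' which is fair for a sketch, but the (M-\ref{mateable-oe}) gap is the one that would actually sink the proof if left as is.
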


\subsection{Virtually mateable maps}
We would now like to generalize Definition~\ref{defn-mateable}  to allow mild discontinuities.
\begin{defn}\label{defn-vmateable}
Let $A:\mathbb{S}^1\to\mathbb{S}^1$ be a  continuous piecewise analytic map.  Then $A$  is said to be a \emph{virtually mateable} map corresponding to a Fuchsian group $\Gamma$ if the following hold:
	\begin{enumerate}[\upshape ({{VM}}-1)]
		\item\label{vmateable-oe} $A$ is a factor of a possibly discontinuous circle endomorphism $\widetilde{A}$ such that the latter is orbit equivalent to a finite index subgroup of $\Gamma$.
		\item\label{vmateable-exp} $A$ is an expansive covering map of degree $d$ greater than one.
		\item\label{vmateable-markov} $A$ is virtually Markov; i.e., there exists $n \in \natls$ such that the  $n-$fold preimages of maximal connected subsets of $\mathbb{S}^1$ on which $A$ is genuinely analytic give a Markov partition of $\mathbb{S}^1$ for $A$.
		\item\label{vmateable-asym_hyp} No periodic break-point of $A$ is asymmetrically hyperbolic; i.e., at such break-points, the multipliers on the two sides need to be equal.
\end{enumerate}
\end{defn}
\begin{prop}[Virtually mateable maps are mateable]\label{prop-vmateable}
	Let $A:\mathbb{S}^1\to\mathbb{S}^1$ be a virtually mateable map of degree $d$ in the sense of Definition~\ref{defn-vmateable}.
 Let $P\in\mathcal{H}_d$.
	Then, $\widehat{A}:\mathcal{D}\to\overline{\D}$ and $P:\mathcal{K}(P)\to\mathcal{K}(P)$ are conformally mateable. 
\end{prop}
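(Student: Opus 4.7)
My plan is to deduce Proposition~\ref{prop-vmateable} from Proposition~\ref{pro-mateable} by identifying which features of Definition~\ref{defn-mateable} actually enter the conformal mating construction of \cite[Proposition~2.18]{MM1}, and verifying that a virtually mateable map satisfies these features after a mild refinement of its Markov structure.

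The first step is to convert the virtually Markov structure into a genuine Markov structure. By hypothesis (VM-\ref{vmateable-markov}), the $n$-fold pullback $\mathcal{P}_n$ of the maximal analytic pieces of $A$ is a finite Markov partition of $\mathbb{S}^1$ for $A$ itself. I would regard the boundary points of $\mathcal{P}_n$ as the ``break-points'' of $A$; with this convention, $A$ satisfies condition (M-\ref{mateable-markov}) literally. The additional break-points introduced by the refinement all lie in the interior of original analytic pieces, so $A$ is locally analytic, and in particular symmetrically hyperbolic, at each of them. Hence condition (VM-\ref{vmateable-asym_hyp}) at the original break-points upgrades to condition (M-\ref{mateable-asym_hyp}) on the refined partition without further work.

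Next, I would revisit the proof of Proposition~\ref{pro-mateable}. That construction assembles a conformal mating in two pieces: on the polynomial side, the B\"ottcher coordinate on the basin of infinity of $P\in\mathcal{H}_d$ conjugates the boundary dynamics of $\mathcal{K}(P)$ to $z\mapsto z^d$ on $\mathbb{S}^1$; on the disk side, conditions (M-\ref{mateable-exp}), (M-\ref{mateable-markov}) and (M-\ref{mateable-asym_hyp}) produce, via a David welding followed by an application of the measurable Riemann mapping theorem, a homeomorphism that conjugates $\widehat A$ on $\mathcal{D}$ to a holomorphic model compatible with the polynomial side; the two are then glued along $\mathbb{S}^1$ via the induced semiconjugacy. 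The orbit-equivalence hypothesis (M-\ref{mateable-oe}) enters only in the identification of the quotient Riemann surface on the disk side; it does not enter into the construction of the mating map itself. Since conditions (VM-\ref{vmateable-exp}), (VM-\ref{vmateable-markov}) (after the refinement above) and (VM-\ref{vmateable-asym_hyp}) are identical to their mateable counterparts, the construction goes through verbatim for the virtually mateable $A$.

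The main obstacle I anticipate is ensuring that the David welding at the enlarged set of break-points coming from $\mathcal{P}_n$ still yields a globally David-regular conjugacy on the disk. This reduces to checking that the new break-points introduce no bad local behavior, which follows from their analyticity: the only periodic break-points of $A$ with respect to the refined partition whose multiplier asymmetry could obstruct the welding are those already present in the original analytic partition, and for these (VM-\ref{vmateable-asym_hyp}) provides the needed control. Once this point is verified, the welding and measurable Riemann mapping step of \cite[Proposition~2.18]{MM1} produces the desired conformal mating of $\widehat A:\mathcal{D}\to\overline{\D}$ with $P:\mathcal{K}(P)\to\mathcal{K}(P)$. The factor orbit-equivalence condition (VM-\ref{vmateable-oe}) plays no role in the construction itself and would only be used afterwards to identify the quotient of the mating as an orbifold associated with the finite-index subgroup of $\Gamma$.
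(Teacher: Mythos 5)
Your proposal takes essentially the same route as the paper: invoke the construction of Proposition~\ref{pro-mateable} (i.e., \cite[Proposition~2.18]{MM1}) after observing that orbit equivalence plays no role in the David-welding and measurable-Riemann-mapping steps, which is exactly the paper's one-line justification. Your version additionally spells out the refinement passing from (VM-\ref{vmateable-markov}) to (M-\ref{mateable-markov}) and checks that the newly created break-points are interior to analytic pieces and hence harmless; this is a useful amplification of a detail the paper leaves implicit, but it is not a different argument.
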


\begin{proof}
    The proof is exactly the same as that of 
    Proposition~\ref{pro-mateable}. This is because the key analytical tool used in its proof, the David extension theorem, does not need orbit equivalence.
\end{proof}

\subsection{Factor Bowen-Series maps}\label{fbs_sec} In this section, we provide examples of virtually mateable maps that are not mateable maps. Nevertheless, Proposition~\ref{prop-vmateable} applies to these examples.

The following class of orbifolds and the associated circle endomorphisms were introduced in \cite{MM2}.\\
 \smallskip
 
 $\mathfrak{F}:=$ hyperbolic orbifolds $\Sigma$ of genus zero with
	\begin{enumerate}
	\item at least one puncture,
	\item at most one order two orbifold point, and
	\item at most one order $\nu\geq 3$ orbifold point.
	\end{enumerate}
We set
\begin{align*}
n=
\begin{cases}
\nu \quad \textrm{if } \Sigma\in\mathfrak{F}\ \textrm{ has an order } \nu\geq 3 \textrm{ orbifold point},\\
1 \quad \mathrm{otherwise}.
\end{cases}
\end{align*}
An orbifold $\Sigma\in\mathfrak{F}$ admits an $n-$fold cyclic cover $\widetilde{\Sigma}$, which is obtained by skewering the surface $\Sigma$ along an infinite geodesic connecting the order $\nu$ orbifold point and a cusp, and gluing $n$ copies of it cyclically. If $\Sigma$ does not have an order $\nu\geq 3$ orbifold point, then $\widetilde{\Sigma}=\Sigma$.
It is easily seen from the above construction that if $\Sigma$ has $\delta_1\geq1$ punctures and $\delta_2\in\{0,1\}$ order two orbifold points, then $\widetilde{\Sigma}$ is a genus zero orbifold with $n(\delta_1-1)+1$ punctures and $n\delta_2$ order two orbifold points. 

When $n\geq 3$, the Fuchsian group $\Gamma$ that uniformizes the surface $\Sigma$ admits a (closed) fundamental polygon $\Pi$, two of whose paired sides are given by the radial lines at angles $0$ and $2\pi/n$. The remaining
\begin{align*}
p=
\begin{cases}
2(\delta_1-1) \quad \textrm{ when }\ \delta_2=0,\\ 
2\delta_1-1 \qquad \textrm{ when }\ \delta_2=1,
\end{cases}
\end{align*}
sides of $\Pi$ are bi-infinite geodesics in $\D$.
The $n$-fold cyclic cover $\widetilde{\Sigma}$ is uniformized by a Fuchsian group $\widetilde{\Gamma}$ which admits a (closed) ideal $m=np-$gon $\widetilde{\Pi}$ as a fundamental domain, and this fundamental domain $\widetilde{\Pi}$ is obtained by gluing $n$ copies of $\Pi$ cyclically around the origin. In particular, $\widetilde{\Pi}$ has ideal vertices at the $n-$th roots of unity (all of which are identified) and it is symmetric under rotation by $2\pi/n$ around the origin. 

We note that $\Gamma=\widetilde{\Gamma}\rtimes \langle M_\omega\rangle$, where $M_\omega(z)=\omega z$, and $\omega:=\exp(2\pi i/n)$.

Due to the $2\pi/n-$rotational symmetry of the construction, the Bowen-Series map $A^{\textrm{BS}}_{\widetilde{\Sigma}}\equiv A^{\textrm{BS}}_{\widetilde{\Gamma}}:\overline{\D}\setminus\Int{\widetilde{\Pi}}\to\overline{\D}$ of $\widetilde{\Gamma}$ equipped with the fundamental domain $\widetilde{\Pi}$ commutes with $M_\omega$. (The map $A^{\textrm{BS}}_{\widetilde{\Gamma}}$ has jump discontinuities at the $n-$th roots of unity, but is continuous otherwise.)
This symmetry allows one to pass to a factor of the above Bowen-Series map on the quotient cone $\D/\langle M_\omega\rangle$. The resulting map is denoted by 
$$
\widehat{A}^{\textrm{BS}}_{\widetilde{\Sigma}}: \left(\overline{\D}\setminus\Int{\widetilde{\Pi}}\right)/\langle M_\omega\rangle\to\overline{\D}/\langle M_\omega\rangle.
$$ 
Let $\xi:\overline{\D}/\langle M_\omega\rangle\to\overline{\D}$ be a uniformization of the cone $\overline{\D}/\langle M_\omega\rangle$ by the closed disk $\overline{\D}$ induced by $z\mapsto z^n$. Then, the \emph{factor Bowen-Series map} associated with $\Sigma$ is defined as
$$
A^{\textrm{fBS}}_{\Sigma}:=\xi\circ \widehat{A}^{\textrm{BS}}_{\widetilde{\Sigma}} \circ\xi^{-1}: \overline{\D}\setminus\Int{\mathcal{H}}\to\overline{\D},
$$ 
where $\mathcal{H}:=\xi(\widetilde{\Pi}/\langle M_\omega\rangle)$. The set $\mathcal{H}$ has $p$ ideal boundary points on~$\mathbb{S}^1$.

By \cite[Proposition~2.5]{MM2}, the factor Bowen-Series map $A^{\textrm{fBS}}_{\Sigma}$ is a piecewise analytic, orientation-preserving, expansive covering map of $\mathbb{S}^1$ of degree  
\begin{align*}
d\equiv d(\Sigma)= m-1 = 1-2n \cdot\chi_{\mathrm{orb}}(\Sigma).
 \end{align*} 
Moreover, when $n\geq 3$, the map $A_\Sigma^{\mathrm{fBS}}$ has $p$ critical points, each of multiplicity $n-1$.
All these critical points are mapped to the same critical value. Finally, the factor Bowen-Series map $A_\Sigma^{\mathrm{fBS}}$ restricts to a self-homeomorphism of order two on~$\partial\mathcal{H}$.

It is easily checked that factor Bowen-Series maps are examples of virtually mateable maps. We also note that factor Bowen-Series maps generalize Bowen-Series maps of punctured sphere Fuchsian groups described in Subsection~\ref{sec-bs}.

According to Proposition~\ref{prop-vmateable}, such maps can be conformally mated with polynomials lying in principal hyperbolic components of appropriate degree. The following considerably stronger version of this mating statement was proved in \cite{LLM24}:

\begin{theorem}\cite[Theorem~1.6]{LLM24}\label{conf_mating_fbs_poly_thm}
Let $P$ be a degree $d$ polynomial with a connected Julia set.
Suppose that $P$ is either 
\begin{itemize}
\item geometrically finite; or 
\item periodically repelling  (i.e., all cycles of $P$ in $\C$ are repelling), finitely renormalizable.
\end{itemize}
Then, $P$ can be conformally mated with any degree $d$ factor Bowen-Series map, and the resulting conformal mating is unique up to M{\"o}bius conjugacy. 
\end{theorem}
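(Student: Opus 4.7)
The plan is to weld $P\colon\mathcal{K}(P)\to\mathcal{K}(P)$ to the canonical extension $\widehat{A}^{\mathrm{fBS}}_{\Sigma}\colon\mathcal{D}\to\overline{\D}$ along their common boundary dynamics, producing a branched covering of a topological sphere that is holomorphic off the seam, and then to straighten it by David's integrability theorem. The first step is to build the topological mating: both $P|_{J(P)}$ and $A^{\mathrm{fBS}}_{\Sigma}|_{\mathbb{S}^1}$ are degree $d$ expansive coverings, each semi-conjugate to $z\mapsto z^d$ on $\mathbb{S}^1$ (via Carath\'eodory prime-end dynamics for $P$, and via the ideal-boundary identification $\partial R\cong\mathbb{S}^1$ for the factor Bowen--Series map). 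I would quotient $\mathcal{K}(P)\sqcup(\overline{\D}\setminus R)$ by the ray-equivalence relation identifying points with the same image in $\mathbb{S}^1$, and check via Moore's theorem that the resulting space is the topological $2$-sphere. Acyclicity of the ray classes follows from Kiwi's landing theorems in the geometrically finite case, and from local connectivity of $J(P)$ (as established under finite renormalizability plus absence of neutral cycles by the Kahn--Lyubich and Kozlovski--Shen--van Strien theories) in the periodically repelling case.

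Next I would define a continuous, piecewise holomorphic model $F$ on $S^2$ equal to $P$ on one hemisphere and to $\widehat{A}^{\mathrm{fBS}}_{\Sigma}$ on the other, and pull back the standard complex structure from the polynomial hemisphere by iterated $F$-preimages to obtain a measurable, $F$-invariant Beltrami coefficient $\mu$ supported on the grand orbit of the seam together with the grand orbits of the critical points of $\widehat{A}^{\mathrm{fBS}}_{\Sigma}$. The analytic heart of the proof is to verify that $\mu$ satisfies a David condition
\begin{equation*}
\sigma\bigl(\{z\in S^2:|\mu(z)|>1-\varepsilon\}\bigr)\leq C\,e^{-\alpha/\varepsilon},\qquad 0<\varepsilon<\varepsilon_0,
\end{equation*}
so that David's theorem yields a homeomorphism $\Phi\colon S^2\to\widehat{\mathbb{C}}$ whose conjugate $\Phi\circ F\circ\Phi^{-1}$ is meromorphic; this is the desired conformal mating. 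For uniqueness, any two matings restrict on each hemisphere to M\"obius conjugacies (using rigidity of $P$ on one side and of the orbifold quotient on the other), and a standard pull-back argument across the David-regular seam promotes these local identifications into a single global M\"obius conjugacy.

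Verifying the David condition is the main obstacle and is where the two hypotheses in the theorem enter essentially. In the geometrically finite case, parabolic cycles of $P$ and parabolic break-points of $A^{\mathrm{fBS}}_{\Sigma}$ have matching multipliers by the balancing condition~\ref{vmateable-asym_hyp} built into the factor Bowen--Series map, and one glues the parabolic germs using Ha\"issinsky-style David extensions, controlling $\mu$ on the rest of $S^2$ via uniform expansion away from the cusps. In the periodically repelling, finitely renormalizable case, the absence of neutral cycles combines with Kahn--Lyubich \emph{a priori} bounds on the principal nest---available precisely because renormalization terminates---to force the pushforwards of $\mu$ under $F$ to decay exponentially near its singular support, yielding the required David estimate.
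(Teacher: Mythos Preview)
Note first that the paper does not prove this theorem; it is quoted verbatim from \cite[Theorem~1.6]{LLM24}. The David-surgery template you can compare against is the one in the proof of Proposition~\ref{conf_mating_exists_prop} (the two-sided factor Bowen--Series case), which follows \cite{LMMN} and is the same machinery underlying the cited result.

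Your high-level plan (topological mating plus David straightening) matches that template, but the mechanism you give for producing the David coefficient is wrong. You propose to obtain $\mu$ by ``pull[ing] back the standard complex structure from the polynomial hemisphere by iterated $F$-preimages''. But $F$ is already holomorphic in the interior of \emph{both} hemispheres in their natural charts---$P$ on $\mathcal{K}(P)$ and $A^{\mathrm{fBS}}_\Sigma$ on $\overline{\D}\setminus\Int{\cH}$ are genuinely analytic---so iterated $F$-pullback of the standard structure just returns the standard structure; the obstruction is concentrated on the measure-zero seam and is invisible to this procedure. In the actual construction one works in a fixed model sphere whose equator carries $z\mapsto z^d$, extends each of the two circle conjugacies (the one taking $z^d$ to $A^{\mathrm{fBS}}_\Sigma\vert_{\mathbb{S}^1}$, and the Carath\'eodory semiconjugacy for $P$) to David homeomorphisms of the respective hemispheres via \cite[Theorem~4.13]{LMMN}, and defines $\mu$ as the pullback of the standard structure under \emph{these David maps}---no iteration is involved. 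Relatedly, your appeal to condition~\ref{vmateable-asym_hyp} is a misreading: that condition asserts only that the two one-sided derivatives of $A^{\mathrm{fBS}}_\Sigma$ agree at each periodic break-point, which is exactly the hypothesis enabling the David extension on the fBS side; it says nothing about matching parabolic multipliers of $P$. The polynomial hypotheses (geometrically finite, or periodically repelling and finitely renormalizable) enter on the \emph{other} hemisphere, through the geometry of $J(P)$ and the basin of infinity needed to make the Carath\'eodory-side extension David---not through any decay of $\mu$ under $F$-pushforward or through principal-nest bounds as you suggest.
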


The conformal matings of Theorem~\ref{conf_mating_fbs_poly_thm} turn out to be algebraic functions (see \cite[Theorem~14.5,Theorem 15.8]{LLM24}). This algebraic description can be used to construct algebraic correspondences on (possibly nodal) Riemann spheres that capture the full dynamics of the Fuchsian groups (uniformizing genus zero orbifolds in $\mathfrak{F}$) as well as of the polynomials.

\begin{theorem}\cite[Theorem~1.9]{LLM24}\cite[Theorem~B]{MM2}\label{corr_mating_fbs_poly_thm}
Let $\Sigma\in\mathfrak{F}$ with the corresponding Fuchsian group $G$.
Let $P$ be a degree $d(\Sigma)$ polynomial with a connected Julia set which is either
\begin{itemize}
\item geometrically finite; or 
\item periodically repelling, finitely renormalizable.
\end{itemize}
Then there exists a holomorphic correspondence $\mathfrak{C}$ on a (possibly nodal) Riemann sphere which is a mating of $P$ and $G$.
\end{theorem}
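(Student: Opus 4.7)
The plan is to realize $\mathfrak{C}$ as a \emph{simultaneous conformal mating} of the two factor Bowen--Series maps $A_i := A^{\mathrm{fBS}}_{\Sigma_i}$ ($i=1,2$) and then to globalize this analytic datum to an algebraic correspondence. Since $d(\Sigma_1)=d(\Sigma_2)=d$, both $A_i$ are virtually mateable of the same degree $d$ in the sense of Definition~\ref{defn-vmateable}, with canonical extensions $\widehat{A}_i : \overline{\D}\setminus\Int{\mathcal{H}_i} \to \overline{\D}$ (see Subsection~\ref{fbs_sec}). The guiding philosophy is that in Proposition~\ref{prop-vmateable} the role of the polynomial $P:\mathcal{K}(P)\to\mathcal{K}(P)$ is essentially that of a degree $d$ expanding holomorphic self-map of a closed quasidisk whose boundary dynamics is quasi-symmetrically conjugate to $z\mapsto z^d$; the second virtually mateable map $\widehat{A}_2$ supplies precisely such a structure on a second copy of $\overline{\D}$, so it can take the place of $P$ in the mating procedure.

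First I would construct the topological model: glue two disks $\overline{\D}_1, \overline{\D}_2$ (each carrying one of the factor Bowen--Series dynamics) along their boundary circles via an orientation-reversing welding homeomorphism that identifies break-points of $A_1$ with those of $A_2$ in a degree-$d$ compatible fashion. The result is a topological sphere $\mathcal{S}$ equipped with a piecewise analytic self-map $F=\widehat{A}_1\sqcup \widehat{A}_2$ defined on $\mathcal{S}\setminus(\Int{\mathcal{H}_1}\cup \Int{\mathcal{H}_2})$. Pulling back the two native disk conformal structures under iterates of $F$ (and the Fuchsian group orbits) yields a measurable almost-complex structure $\mu$ on $\mathcal{S}$ that is compatible with the two disk structures on grand orbits.

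The next step is to upgrade this model to an honest conformal mating via the David integrability theorem -- the same analytic tool that powers the proofs of Proposition~\ref{prop-vmateable} and \cite[Theorem~1.6]{LLM24}. Condition~\ref{vmateable-asym_hyp} for both $A_1$ and $A_2$ ensures that the welding and the resulting $\mu$ are David-regular at all periodic break-points; solving the Beltrami equation $\bar\partial w = \mu\, \partial w$ produces a homeomorphism $w:\mathcal{S}\to\widehat{\C}$ conjugating $F$ to a holomorphic map $\widetilde F$ on its canonical domain in $\widehat{\C}$. Invoking the algebraic characterization of conformal matings of factor Bowen--Series maps from \cite[Theorems~14.5 and 15.8]{LLM24}, one then identifies $\widetilde F$ with an explicit algebraic function, and the Zariski closure of its graph is the sought correspondence $\mathfrak{C}$; any nodes on the underlying Riemann surface arise from forced identifications at critical values coming from the distinct combinatorics of $\Sigma_1$ and $\Sigma_2$.

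Finally, to identify the quotient, note that by construction the regular set $\Omega(\mathfrak{C})$ splits into two invariant quasidisks $\Omega_i=w(\Int{\overline{\D}_i})$, on each of which $\mathfrak{C}$ restricts (via $w$) to the action of the Fuchsian group $G_i$ uniformizing $\Sigma_i$; hence $\Omega_i/\mathfrak{C}\cong\Sigma_i$, and the full quotient is $\Sigma_1\sqcup\Sigma_2$ as required. The main obstacle I expect is the David-theoretic welding of the third paragraph: one must verify that $\mu$ lies in the David class \emph{simultaneously} on both hemispheres of $\mathcal{S}$, which requires careful bookkeeping of multipliers on the two sides of each break-point and of the symmetry properties ensured by Condition~\ref{vmateable-asym_hyp}. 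This is precisely where the hypothesis $d(\Sigma_1)=d(\Sigma_2)$ becomes indispensable, since together with the virtual Markov property it guarantees that the Markov pieces of $A_1$ and $A_2$ match up combinatorially along the welding circle, so that the David regularity constants on the two sides can be made uniform.
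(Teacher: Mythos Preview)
Your proposal addresses the wrong theorem. The statement you were asked to prove is Theorem~\ref{corr_mating_fbs_poly_thm}: given a \emph{single} orbifold $\Sigma\in\mathfrak{F}$ with Fuchsian group $G$ and a \emph{polynomial} $P$ of degree $d(\Sigma)$, construct a correspondence that mates $P$ and $G$. Your write-up instead introduces two orbifolds $\Sigma_1,\Sigma_2$ with $d(\Sigma_1)=d(\Sigma_2)$, mates their factor Bowen--Series maps with one another, and concludes that the quotient of the regular set is $\Sigma_1\sqcup\Sigma_2$. That is a sketch of Theorem~\ref{simult_unif_corr_thm} (the simultaneous uniformization result), not of Theorem~\ref{corr_mating_fbs_poly_thm}. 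Nowhere does a polynomial $P$ appear in your argument, and nowhere does the paper's statement mention a second orbifold.

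Moreover, Theorem~\ref{corr_mating_fbs_poly_thm} is not proved in this paper; it is quoted from \cite[Theorem~1.9]{LLM24} and \cite[Theorem~B]{MM2} as background for the new results. So there is no ``paper's own proof'' to compare against here. If your intent was to sketch Theorem~\ref{simult_unif_corr_thm}, then your outline is broadly in the right spirit (David welding of two factor Bowen--Series maps, followed by algebraic globalization), and indeed parallels the paper's Propositions~\ref{conf_mating_exists_prop} and~\ref{conf_mating_b_inv_prop} together with the lifting construction in the proof of Theorem~\ref{simult_unif_corr_thm}; but you should resubmit it against that statement rather than this one.
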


\section{Algebraic correspondences uniformizing two genus zero orbifolds}\label{qf_corr_sec}

The passage from a genus zero orbifold group to its factor Bowen-Series map can be thought of as a `forgetful procedure' from an invertible dynamical system to a non-invertible one. However, as explained in the previous section, conformal matings of factor Bowen-Series maps with polynomials can in fact be promoted to algebraic correspondences where the complete dynamical structure of the groups is recovered.

In this section, we will illustrate a new application of this mating framework by establishing a combination theorem for a pair of topologically distinct genus zero orbifolds. The `forgetfulness' mentioned above is key to this construction; indeed, we will show that two factor Bowen-Series maps can be conformally mated (producing a holomorphic map on a subset of the sphere) provided that they have the same degree on $\mathbb{S}^1$, even if the underlying topological surfaces are not homeomorphic. Subsequently, we will give an algebraic description of this mating, which will facilitate the construction of an algebraic correspondence which uniformizes two topologically nonequivalent genus zero orbifolds. The resulting correspondence can be regarded as a generalization of quasi-Fuchsian groups that uniformize a pair of homeomorphic surfaces.  

\subsection{Conformal mating of factor Bowen-Series maps}\label{conf_mat_subsec}

For the rest of this section, let us fix $\Sigma_1,\Sigma_2\in\mathfrak{F}$ (see Subsection~\ref{fbs_sec} for the definition of $\mathfrak{F}$) such that 
\begin{itemize}
\item $d(\Sigma_1)=d(\Sigma_2)$, 
\item $\Sigma_1\ncong\Sigma_2$. 
\end{itemize}
Such examples arise in the following ways.

\begin{enumerate}\label{equality_cases}
\item $\Sigma_1=$ sphere with $\delta_1$ punctures, $\Sigma_2=$ sphere with $\delta_1'$ punctures and an order $\nu\geq 3$ orbifold point such that $\nu(\delta_1'-1)=\delta_1-1$. In this case, $\Sigma_1$ is homeomorphic to the $\nu-$fold cover $\widetilde{\Sigma}_2$ of $\Sigma_2$.

\item $\Sigma_1=$ sphere with $\delta_1$ punctures, $\Sigma_2=$ sphere with $\delta_1'$ punctures, an order $2$ orbifold point, and an order $\nu\geq 3$ orbifold point such that $\nu(2\delta_1'-1)=2\delta_1-2$.

\item $\Sigma_1=$ sphere with $\delta_1$ punctures and an order $2$ orbifold point, $\Sigma_2=$ sphere with $\delta_1'$ punctures, an order $2$ orbifold point, and an order $\nu\geq 3$ orbifold point such that $\nu(2\delta_1'-1)=2\delta_1-1$.

\item $\Sigma_1=$ sphere with $\delta_1$ punctures and an order $\nu_1\geq 3$ orbifold point, $\Sigma_2=$ sphere with $\delta_1'$ punctures and an order $\nu_1'\geq 3$ orbifold point, such that $\nu_1(\delta_1-1)=\nu_1'(\delta_1'-1)$.

\item $\Sigma_1=$ sphere with $\delta_1$ punctures, an order $2$ orbifold point, and an order $\nu_1\geq 3$ orbifold point, $\Sigma_2=$ sphere with $\delta_1'$ punctures, an order $2$ orbifold point, and an order $\nu_1'\geq 3$ orbifold point, such that $\nu_1(2\delta_1-1)=\nu_1'(2\delta_1'-1)$.

\item $\Sigma_1=$ sphere with $\delta_1$ punctures and an order $\nu_1\geq 3$ orbifold point, $\Sigma_2=$ sphere with $\delta_1'$ punctures, an order $2$ orbifold point, and an order $\nu_1'\geq 3$ orbifold point such that $2\nu_1(\delta_1-1)=\nu_1'(2\delta_1'-1)$.
\end{enumerate}

Let $A_1\equiv A^{\textrm{fBS}}_{\Sigma_1}$ and $A_2\equiv A^{\textrm{fBS}}_{\Sigma_2}$ be the factor Bowen-Series maps associated with the surfaces $\Sigma_1$ and $\Sigma_2$. Since each $A_j$, $j\in\{1,2\}$, is an expansive circle covering of degree $d:=d(\Sigma_1)=d(\Sigma_2)$, there exist unique circle homeomorphisms $\mathfrak{g}_j:\mathbb{S}^1\to\mathbb{S}^1$ conjugating $z^d$ to $A_j$ and carrying $1$ to $1$. 

\begin{defn}\label{conf_mating_def}
The maps $A_1:\overline{\D}\setminus\Int{\mathcal{H}_1}\to\overline{\D}$ and $A_2:\overline{\D}\setminus\Int{\mathcal{H}_2}\to\overline{\D}$ are said to be \emph{conformally mateable} if there exist a continuous map $F\colon \mathrm{Dom}(F)\subsetneq\widehat{\C}\to\widehat{\C}$ (called a \emph{conformal mating} of $A_1$ and $A_2$) that is complex-analytic in the interior of $\mathrm{Dom}(F)$ and homeomorphisms $\mathfrak{X}_j:\overline{\D}\to\widehat{\C}$, $j\in\{1,2\}$, conformal on $\D$, satisfying
	\begin{enumerate}[\upshape ({{CM}}-1)]
		\item\label{topo_cond} $\mathfrak{X}_1\left(\overline{\D}\right)\cup \mathfrak{X}_2\left(\overline{\D}\right) = \widehat{\C}$,
		
		\item\label{jordan_cond} $\Lambda:=\mathfrak{X}_1(\mathbb{S}^1)=\mathfrak{X}_2(\mathbb{S}^1)$ is a Jordan curve with $\mathfrak{X}_1(\mathfrak{g}_1(w))=\mathfrak{X}_2(\mathfrak{g}_2(\overline{w}))$ for $w\in\mathbb{S}^1$,
		
		\item\label{dom_cond} $\mathrm{Dom}(F)= \mathfrak{X}_1\left(\overline{\D}\setminus\Int{\mathcal{H}_1}\right)\cup\mathfrak{X}_2\left(\overline{\D}\setminus\Int{\mathcal{H}_2}\right)$, and
		
		\item $\mathfrak{X}_j\circ A_j(z) = F\circ \mathfrak{X}_j(z),\quad \mathrm{for}\ z\in\overline{\D}\setminus\Int{\mathcal{H}_j}$, $j\in\{1,2\}$.
				\end{enumerate}
The maps $\mathfrak{X}_j$, $j\in\{1,2\}$, are called \emph{mating conjugacies} associated with the conformal mating $F$ of $A_1$ and $A_2$. 
We say that the mating of $A_1$ and $A_2$ is unique if $F$ is unique up to M{\"o}bius conjugation.
\end{defn}

If a mating $F$ exists, the point $\mathfrak{X}_1(1)=\mathfrak{X}_2(1)$ is a marked fixed point of $F$ on $\Lambda$.

\begin{prop}\label{conf_mating_exists_prop}
The maps $A_1$ and $A_2$ are conformally mateable. Moreover, the conformal mating is unique. 
\end{prop}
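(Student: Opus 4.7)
The plan is to adapt the David-extension strategy that proves Proposition~\ref{prop-vmateable} (equivalently \cite[Proposition~2.18]{MM1}) to the two-sided setting in which the polynomial is replaced by a second factor Bowen-Series map.

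First, I would construct a topological model. Take two disjoint copies $\overline{\D}_1,\overline{\D}_2$ of the closed disk and glue them along their boundary circles via the involution $\mathfrak{g}_1(w)\sim\mathfrak{g}_2(\overline{w})$, producing a topological $2$-sphere $S$. Since $z\mapsto z^d$ commutes with complex conjugation and $\mathfrak{g}_j$ conjugates $z\mapsto z^d$ to $A_j$, the piecewise definition $F_0:=A_j$ on $\overline{\D}_j\setminus\Int\mathcal{H}_j$ glues continuously on $S$; this gives both a candidate Jordan curve $\Lambda_{\mathrm{top}}$ (the image of $\mathbb{S}^1$) and a candidate $\mathrm{Dom}(F_0)$ satisfying the topological requirements (CM-\ref{topo_cond})--(CM-\ref{dom_cond}).

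Second, I would produce a David-integrable complex structure on $S$. Each $A_j$ is virtually mateable (Definition~\ref{defn-vmateable}); Markov dynamics together with expansion along the grand orbit of every break-point allows one to build a David homeomorphism $\Psi_j:\overline{\D}_j\to\overline{\D}$ (with David exponent determined by the expansion and the multiplicities $n{-}1$ of the critical points coming from the $\nu\geq 3$ orbifold point) that conjugates $A_j$ to $z\mapsto z^d$ on $\mathbb{S}^1$ and is conformal on $\mathrm{Int}\,\mathcal{H}_j$. The two pulled-back Beltrami forms agree along $\mathbb{S}^1$ by construction of the gluing, so they assemble into a global Beltrami differential $\mu$ on $S$ that is trivial on $\Int\mathcal{H}_1\cup\Int\mathcal{H}_2$ and David on the rest. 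Applying the David integrability theorem yields a homeomorphism $\Phi:S\to\widehat{\C}$, conformal off the common $\Phi$-image of $\mathbb{S}^1$, and setting $\mathfrak{X}_j:=\Phi|_{\overline{\D}_j}$ and $F:=\Phi\circ F_0\circ\Phi^{-1}$ produces a continuous map satisfying (CM-\ref{topo_cond})--(CM-\ref{dom_cond}). Analyticity of $F$ on $\mathrm{Int}\,\mathrm{Dom}(F)$ is immediate on each open disk, and across $\Lambda=\Phi(\mathbb{S}^1)$ it follows from continuity together with removability of the David Jordan curve $\Lambda$ for the map $F$ restricted to a neighborhood thereof (a standard consequence of David regularity for Markov dynamics, as used in \cite{LLM24,MM1,MM2}).

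Finally, uniqueness of $F$ up to M\"obius conjugation follows by the usual dynamical rigidity argument: any two conformal matings $F,F'$ come with mating conjugacies $\mathfrak{X}_j,\mathfrak{X}_j'$ whose composition $\mathfrak{X}_j'\circ\mathfrak{X}_j^{-1}$ is conformal on each open disk and conjugates $F$ to $F'$ along the boundary; expansiveness of $F$ on $\Lambda$ and density of iterated preimages pin down the boundary identification uniquely, and the Schwarz reflection-type argument (or removability of $\Lambda$) upgrades the two-sided conformal conjugacy to a conformal automorphism of $\widehat{\C}$, hence M\"obius.

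The main obstacle is the David step. In the one-sided Proposition~\ref{prop-vmateable} one side is already a honest polynomial and its B\"ottcher coordinate furnishes a genuine conformal conjugacy with $z^d$; here both sides contribute David (not quasiconformal) distortion, and one must verify that the sum of the two David exponents still satisfies the David integrability bound. This is where the hypotheses built into $\mathfrak{F}$ (at most one order $\nu\geq 3$ point, at most one order $2$ point) and the expansion properties listed after Definition~\ref{defn-vmateable} enter decisively, together with the orbit-equivalence--free version of the David extension theorem flagged in the proof of Proposition~\ref{prop-vmateable}.
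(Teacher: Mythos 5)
Your plan is essentially the paper's: build the topological mating by gluing the two hemispheres along the circle via the conjugacies $\mathfrak{g}_j$, endow the resulting sphere with an invariant David Beltrami coefficient using the David extensions of $\mathfrak{g}_j$ (\cite[Theorem~4.13]{LMMN}, which applies here because the periodic break-points of factor Bowen--Series maps are symmetrically parabolic), integrate it via the David Integrability Theorem, and use conformal removability of David Jordan curves \cite[Theorem~2.8]{LMMN} both to push holomorphicity across $\Lambda$ and to prove uniqueness. The paper works directly on $\widehat{\C}$ with $\D$ and $\D^*=\widehat{\C}\setminus\overline{\D}$ in place of your abstract sphere $S$, identifying the two hemispheres with $\overline{\D}$ via $\mathfrak{g}_1$ and $\widetilde{\mathfrak{g}}_2=\mathfrak{g}_2\circ\eta$, but the mechanism is the same.

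Two points in your write-up need fixing. First, your Beltrami set-up is reversed. With $\mu|_{\overline{\D}_j}=\Psi_j^*(\mu_0)$, the integrating map $\Phi$ is conformal with respect to $\mu$, \emph{not} $\mu_0$; so $\Phi|_{\overline{\D}_j}$ is not conformal on $\D_j$ (only on $\Int\mathcal{H}_j$, where you forced $\Psi_j$ conformal), and hence is not a valid mating conjugacy. The alternative $\Phi\circ\Psi_j^{-1}$ is conformal, but it does not intertwine $A_j$ with $F$ (one would need $A_j\circ\Psi_j=\Psi_j\circ A_j$, whereas by construction $\Psi_j\circ A_j=P_0\circ\Psi_j$). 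The correct Beltrami is the one obtained by transporting the standard structures on the two copies of $\overline{\D}$ to $\widehat{\C}$ through the David extensions of the circle conjugacies, namely $\mathfrak{g}_1^*(\mu_0)$ on $\D$ and $\widetilde{\mathfrak{g}}_2^*(\mu_0)$ on $\D^*$; then one defines the topological mating as the $\mathfrak{g}_j$-conjugate of $A_j$ on each hemisphere, lets $H$ integrate $\mu$, and takes $\mathfrak{X}_1=H\circ\mathfrak{g}_1^{-1}$, $\mathfrak{X}_2=H\circ\widetilde{\mathfrak{g}}_2^{-1}$, which are conformal by \cite[Theorem~2.2]{LMMN}. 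Second, your closing concern about ``the sum of the two David exponents'' is not an actual obstacle: a coefficient that is David on $\D$ and David on $\D^*$ is automatically David on $\widehat{\C}$, since the measure of $\{|\mu|>1-\varepsilon\}$ is just the sum of two exponentially decaying contributions; no extra constraint on $\mathfrak{F}$ is needed for integrability.
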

\begin{proof}
The existence of the desired conformal mating is a consequence of \cite[Theorem~5.2]{LMMN}. We include the mating construction for completeness and future reference. 

Let $P_0(z)$ be the map $z\mapsto z^d$, where $d$ is the common degree of $A_1, A_2$ on $\mathbb{S}^1$.
By the proof of \cite[Lemma~3.4]{MM2}, each $A_j$, $j\in\{1,2\}$, admits a Markov partition satisfying conditions (4.1) and (4.2) of \cite[Theorem~5.2]{LMMN}. Moreover, each periodic break-point of its piecewise analytic definition is symmetrically parabolic (cf. \cite[Definition~4.6, Remark~4.7]{LMMN}). 
By \cite[Theorem~4.13]{LMMN}, the circle homeomorphisms $\mathfrak{g}_j$, that conjugate $P_0$ to $A_j$, $j\in \{1,2\}$, extend continuously to David homeomorphisms~of~$\D$. 

Let $\eta(z):=1/z$ and $\widetilde{\mathfrak{g}}_2= \mathfrak{g}_2\circ \eta:\widehat{\C}\setminus\D\to\overline{\D}$. We first define the \emph{topological mating} of $A_1$ and $A_2$ as follows:
\begin{align*}
\widetilde{F}:= \begin{cases} \mathfrak{g}_1^{-1}\circ A_1\circ \mathfrak{g}_1,  & \textrm{on}\,\, \overline{\D}\ \setminus\ \mathfrak{g}_1^{-1}(\Int{\cH}_1)\\
    \widetilde{\mathfrak{g}_2}^{-1}\circ A_2\circ \widetilde{\mathfrak{g}}_2, &\textrm{on}\,\, \overline{\D^*}\ \setminus\ \widetilde{\mathfrak{g}_2}^{-1}(\Int{\cH}_2), 
\end{cases}
\end{align*}
where $\D^*:=\widehat{\C}\setminus\overline{\D}$.
The two definitions agree on $\mathbb S^1$. The domain of definition $\mathrm{Dom}(\widetilde{F})$ of the topological mating is $\widehat{\C}\setminus\left(\mathfrak{g}_1^{-1}(\Int{\cH}_1)\cup\widetilde{\mathfrak{g}_2}^{-1}(\Int{\cH}_2)\right)$ (see Figure~\ref{mating_domain_model_fig}).
\begin{figure}[h!]
\captionsetup{width=0.96\linewidth}
\begin{tikzpicture}
\node[anchor=south west,inner sep=0] at (0,0) {\includegraphics[width=0.49\textwidth]{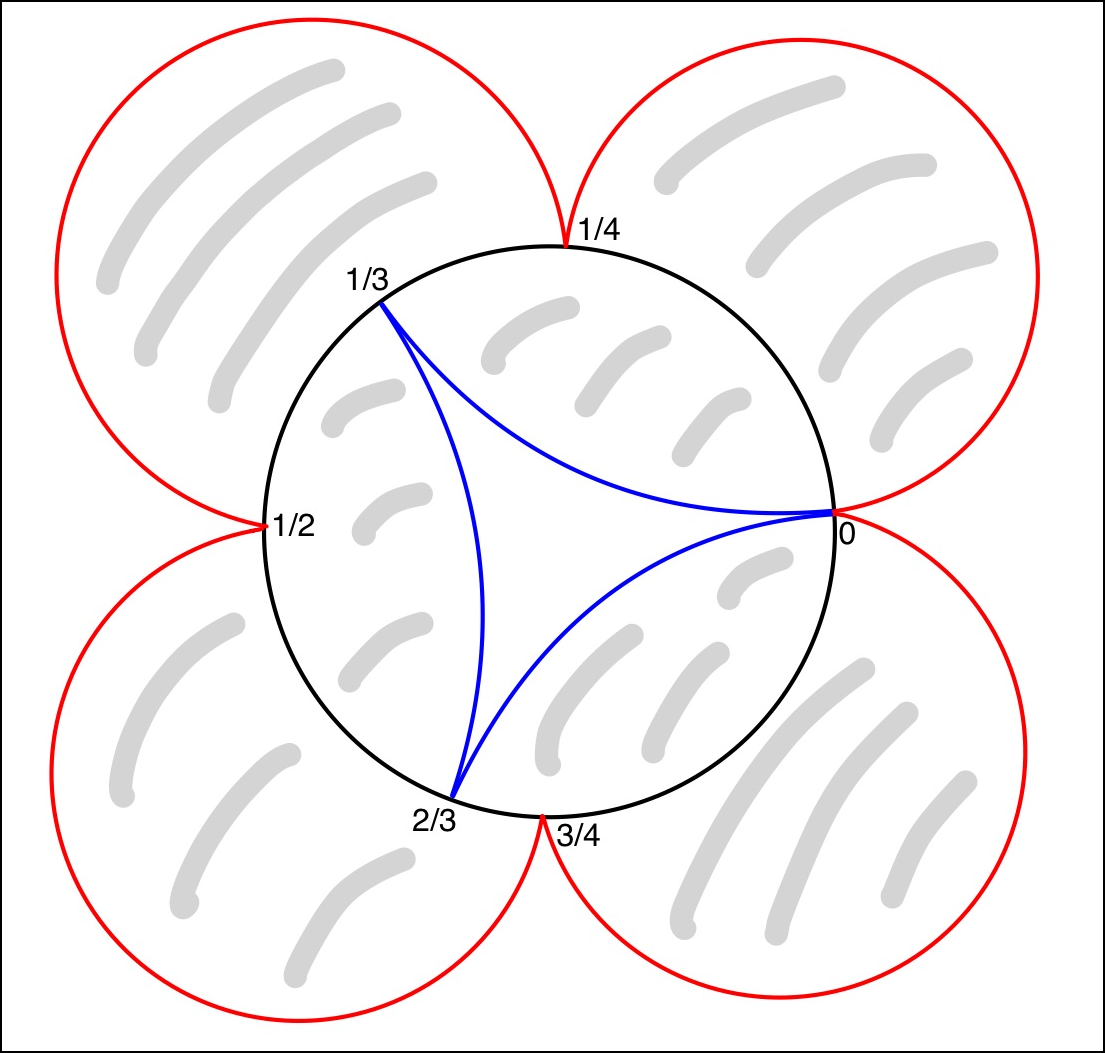}}; 
\node[anchor=south west,inner sep=0] at (6.4,0) {\includegraphics[width=0.48\textwidth]{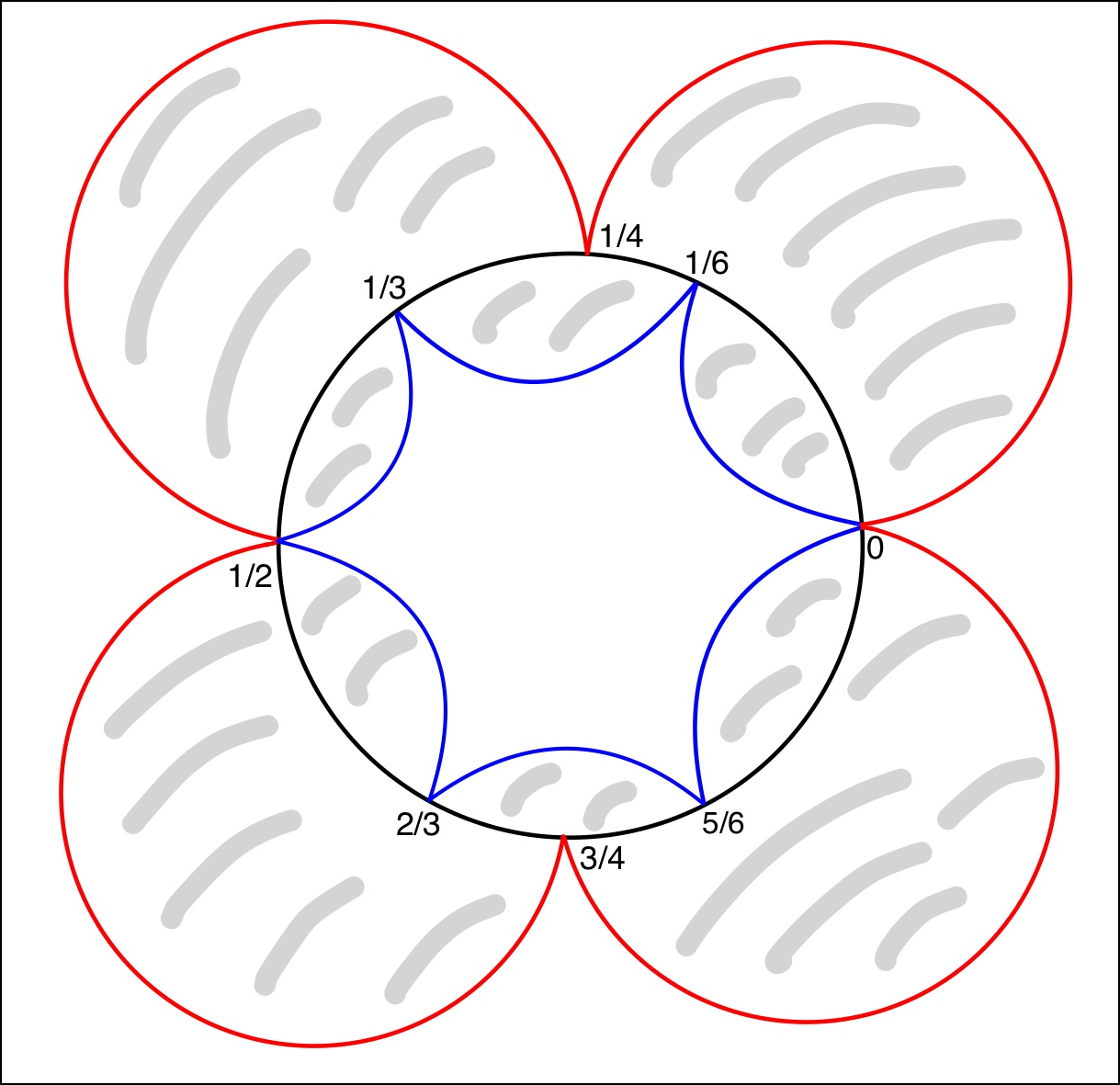}}; 
\node at (3.25,2.96) {\begin{tiny}$\mathfrak{g}_1^{-1}(\cH_1)$\end{tiny}};
\node at (5.64,3.02) {\begin{tiny}$\widetilde{\mathfrak{g}}_2^{-1}(\cH_2)$\end{tiny}};
\node at (9.6,2.96) {\begin{tiny}$\mathfrak{g}_1^{-1}(\cH_1)$\end{tiny}};
\node at (11.94,3.02) {\begin{tiny}$\widetilde{\mathfrak{g}}_2^{-1}(\cH_2)$\end{tiny}};
\end{tikzpicture}
\caption{Left: The shaded region is the domain of definition of the topological mating of a factor Bowen-Series for a sphere with $3$ punctures and an order $3$ orbifold point, and a factor Bowen-Series for a sphere with $2$ punctures, an order $2$ orbifold point and an order $4$ orbifold point. The interior of the domain of definition is a simply connected domain. Right: The shaded region is the domain of definition of the topological mating of a factor Bowen-Series for a sphere with $4$ punctures and an order $4$ orbifold point, and a factor Bowen-Series for a sphere with $3$ punctures and an order $6$ orbifold point. The interior of the domain of definition has two simply connected components.}
\label{mating_domain_model_fig}
\end{figure}

Next, we define an $\widetilde{F}$-invariant David (Beltrami) coefficient $\mu$ on $\widehat{\C}$ as follows (see \cite[\S 2]{LMMN} for background on David homeomorphisms and David coefficients). In $\D$, we let $\mu$ be the pullback of the standard complex structure under the David homeomorphism $\mathfrak{g}_1$. In $\widehat{\C}\setminus \overline{\D}$, we let $\mu$ be the pullback of the standard complex structure under map $\widetilde{\mathfrak{g}}_2$. Then, $\mu$ is a David coefficient on $\widehat{\C}$. Since $A_1, A_2$ are holomorphic, it follows that $\mu$ is $\widetilde{F}$-invariant.

By the David Integrability Theorem (see \cite{Dav88}, \cite[Theorem~20.6.2, p. 578]{AIM09}), there exists a David homeomorphism $H$ of $\widehat{\C}$ that solves the Beltrami equation with coefficient $\mu$. Consider the map 
$$
F:=H\circ\widetilde{F}\circ H^{-1}:\mathrm{Dom}(F):=H\left(\mathrm{Dom}(\widetilde{F})\right)\to\widehat{\C}.
$$
By \cite[Theorem~2.2]{LMMN}, the maps $\mathfrak{X}_1:=H\circ\mathfrak{g}_1^{-1}:\D\to\widehat{\C}$ and $\mathfrak{X}_2:=H\circ\widetilde{\mathfrak{g}}_2^{-1}:\D\to\widehat{\C}$ are conformal.
Hence, $F$ is holomorphic on $\Int{(\mathrm{Dom}(F))}\setminus\Lambda$, where $\Lambda:=H(\mathbb S^1)$.
Further, $F$ extends to a homeomorphism in a neighborhood of $\Lambda$, pinched at finitely many points, such that this extension is conformal outside $\Lambda$. Since David circles are locally conformally removable (cf. \cite[Theorem 2.8]{LMMN}), it follows that the above local extensions are conformal. Hence, $F$ is holomorphic on $\Int{\mathrm{Dom}(F)}$.
It is readily checked that $\mathfrak{X}_1, \mathfrak{X}_2$ are the desired mating conjugacies. 

Uniqueness of the mating follows from conformal removability of the curve~$\Lambda$.
\end{proof}

To give an explicit description of the mating $F$ of $A_1$ and $A_2$, we need the following definition.

\begin{defn}\label{b_inv_def}
Let $\{\Omega_1,\cdots,\Omega_k\}$ be a disjoint collection of proper simply connected sub-domains of $\widehat{\C}$ such that $\Int{\overline{\Omega_j}}=\Omega_j$, $j\in\{1,\cdots, k\}$, and let $\cD:=\displaystyle\bigsqcup_{j=1}^k\Omega_j$. Further, let $\mathfrak{S}\subset\partial\cD$ be a finite set such that $\partial^0\cD:=\partial\cD\setminus\mathfrak{S}$ is a finite union of disjoint non-singular real-analytic curves.

\noindent The set $\cD$ is called an \emph{inversive multi-domain} if it admits a continuous map $S:\overline{\cD}\to\widehat{\C}$ satisfying the properties:
\begin{enumerate}[\upshape ({{I}}-1)]
\item\label{mero_cond} $S$ is meromorphic on $\cD$,
\item\label{permute_cond} $S(\partial\Omega_j)=\partial\Omega_{j'}$, for some $j'\in\{1,\cdots,k\}$, and
\item\label{inv_cond} $S:\partial\cD\to\partial\cD$ is an orientation-reversing involution preserving $\mathfrak{S}$.
\end{enumerate}
The map $S$ is called a \emph{B-involution} of the inversive multi-domain $\cD$.

\noindent When $k=1$, the domain $\cD$ is called an \emph{inversive domain}.
\end{defn}

\begin{prop}\label{conf_mating_b_inv_prop}
The conformal mating $F$ of $A_1$ and $A_2$ is a B-involution of an inversive multi-domain $\cD$. Further, if $\gcd(p_1,p_2)=1$, then $\cD$ is connected.
\end{prop}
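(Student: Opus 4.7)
The plan is to exploit the explicit conformal mating construction from Proposition~\ref{conf_mating_exists_prop} to identify the topology of $\mathrm{Dom}(F)$ and verify the inversive multi-domain and B-involution axioms for $\cD:=\Int\mathrm{Dom}(F)$. Working in the uniformized model via the David homeomorphism $H$, $\Int\mathrm{Dom}(\widetilde{F})=\widehat{\C}\setminus(K_1\cup K_2)$, where $K_1:=\mathfrak{g}_1^{-1}(\cH_1)\subset\overline{\D}$ and $K_2:=\widetilde{\mathfrak{g}}_2^{-1}(\cH_2)\subset\overline{\D^*}$ are closed topological disks with disjoint interiors, each pinched to $\mathbb{S}^1$ at $p_j$ ``touch points.''

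First I would carry out a topological analysis. Each $\partial\cH_j$ consists of $p_j$ non-singular real-analytic arcs (images of geodesic sides of the fundamental polygon under the $\xi_j$-uniformization) joined cyclically at the $p_j$ ideal vertices on $\mathbb{S}^1$; these push forward to $\partial\cD$ via the conformal maps $\mathfrak{X}_j|_\D$, and the pinch-point set $\mathfrak{S}\subset\Lambda$ consists of the images of the ideal vertices. Setting $k:=|K_1\cap K_2\cap\mathbb{S}^1|$, a Mayer--Vietoris-type computation yields $\chi(\widehat{\C}\setminus(K_1\cup K_2))=2-\bigl(\chi(K_1)+\chi(K_2)-\chi(K_1\cap K_2)\bigr)=k$, and each complementary component has frontier a pinched Jordan curve built from alternating arcs of $\partial K_1$ and $\partial K_2$, so each is topologically a disk (simply connected) satisfying $\Int\overline{\Omega}=\Omega$. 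Thus $\cD=\bigsqcup_{\ell=1}^{k}\Omega_\ell$ is a disjoint union of $k$ simply connected domains. Note that $k\geq 1$ always: the marked fixed point $1$ is an ideal vertex of each $\cH_j$ in the symmetric base case (descending from the identified $n_j$-th-root-of-unity vertex of $\widetilde{\Pi}_j$), and this coincidence persists under QC deformation.

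Next I would verify the three B-involution axioms. Axiom (I-\ref{mero_cond}) is immediate from Proposition~\ref{conf_mating_exists_prop}. For (I-\ref{inv_cond}), I use the property from Subsection~\ref{fbs_sec} that $A_j|_{\partial\cH_j}$ is an orientation-reversing order-two self-homeomorphism preserving the ideal vertex set; conjugating by $\mathfrak{X}_j$, $F|_{\mathfrak{X}_j(\partial\cH_j)}$ is a continuous orientation-reversing involution, and the two local involutions glue continuously along the pinch points (compatibly, since each pinch on $\Lambda$ is mapped to another pinch by the involutions on both sides) to give the global involution $F|_{\partial\cD}$ preserving $\mathfrak{S}$. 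Axiom (I-\ref{permute_cond}) then follows by continuity: $F(\partial\Omega_\ell)$ is the boundary of some $\Omega_{\ell'}$, forcing $F(\Omega_\ell)=\Omega_{\ell'}$.

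Finally, for the connectedness claim, I would show $k=\gcd(p_1,p_2)$. In the symmetric base case, the $np_j$ equi-distributed ideal vertices of $\widetilde{\Pi}_j$ descend through the $\langle M_{\omega_j}\rangle$-quotient and the $\xi_j$-uniformization to the $p_j$-th roots of unity as ideal vertices of $\cH_j$ on the $A_j$-circle; combined with the mating identification $\mathfrak{X}_1(\mathfrak{g}_1(w))=\mathfrak{X}_2(\mathfrak{g}_2(\overline{w}))$ and the conjugation-invariance $\overline{\mu_{p_j}}=\mu_{p_j}$, the shared touch points correspond to $\mu_{p_1}\cap\mu_{p_2}=\mu_{\gcd(p_1,p_2)}$, yielding $k=\gcd(p_1,p_2)$. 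The main obstacle is making this positional identification rigorous in a general marking: the circle homeomorphisms $\mathfrak{g}_j$ distort root-of-unity configurations, so one must argue that the shared-touch-point count is invariant under QC (David) deformation and reduce to the base case. In particular, $\gcd(p_1,p_2)=1$ then forces $k=1$, whence $\cD$ is connected.
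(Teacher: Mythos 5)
Your plan follows the same skeleton as the paper's proof. Both establish the B-involution axioms from the order-two involution $A_j|_{\partial\cH_j}$ furnished by Subsection~\ref{fbs_sec}, and both reduce the component count to the number of ``touch points'' of the two pinched closed disks on the circle under the welding identification $\mathfrak{X}_1(\mathfrak{g}_1(w))=\mathfrak{X}_2(\mathfrak{g}_2(\overline{w}))$. Your Mayer--Vietoris computation is a clean explicit route to the fact that $\cD$ breaks into $k$ simply connected pieces, which the paper states without elaboration, and your verification of (I-\ref{mero_cond})--(I-\ref{inv_cond}) is essentially the paper's.

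The gap is exactly where you flag it, but you have misdiagnosed it. What the argument actually needs is the statement $\mathfrak{g}_j^{-1}(\mathscr{I}_j)=\mu_{p_j}$ (the $p_j$-th roots of unity), where $\mathscr{I}_j:=\partial\cH_j\cap\mathbb{S}^1$: one must locate the ideal vertices of $\cH_j$ \emph{after pulling back to $z^d$-coordinates by $\mathfrak{g}_j$}, not locate $\mathscr{I}_j$ itself on the $A_j$-circle. Your write-up conflates the two; they coincide only if $\mathfrak{g}_j$ happens to fix $\mu_{p_j}$ pointwise, which is not a priori clear. The paper simply cites \cite[\S 4.1]{MM2} for this fact. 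Moreover, your worry about ``invariance under QC (David) deformation'' is easy to dispatch once the correct object is identified: if $A_\Gamma=\phi\circ A_{\Gamma_0}\circ\phi^{-1}$ for a circle homeomorphism $\phi$ fixing $1$, then uniqueness of the normalized conjugacy gives $\mathfrak{g}_\Gamma=\phi\circ\mathfrak{g}_{\Gamma_0}$, while $\mathscr{I}_\Gamma=\phi(\mathscr{I}_{\Gamma_0})$, so $\mathfrak{g}_\Gamma^{-1}(\mathscr{I}_\Gamma)=\mathfrak{g}_{\Gamma_0}^{-1}(\mathscr{I}_{\Gamma_0})$ by the chain rule. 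Thus the set $\mathfrak{g}_j^{-1}(\mathscr{I}_j)$ is marking-independent, and your proposed reduction to the symmetric base case is sound in principle --- but the QC-invariance is automatic rather than ``the main obstacle,'' and the remaining content is the base-case identification of $\mathfrak{g}_{\Gamma_0}^{-1}(\mathscr{I}_{\Gamma_0})$ with $\mu_{p_j}$, which you would still need to work out or cite.
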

\begin{proof}
Let us denote the ideal boundary points of $\cH_j$ on $\mathbb{S}^1$ by $\mathscr{I}_j$, $j\in\{1,2\}$.
We set 
$$
\cD:=\Int{\mathrm{Dom}(F)}, \quad \mathfrak{S}_j:=\mathfrak{X}_j(\mathscr{I}_j),\ j\in\{1,2\} \quad \textrm{and} \quad \mathfrak{S}:=\mathfrak{S}_1\cup\mathfrak{S}_2.
$$
The facts that $\partial\cH_j\setminus\mathscr{I}_j$ consists of finitely many disjoint non-singular real-analytic curves and that $\mathfrak{X}_j$ is conformal on $\D$, $j\in\{1,2\}$, imply that $\partial\cD\setminus\mathfrak{S}$ is a finite union of disjoint non-singular real-analytic curves. Further, the meromorphic map $F:\overline{\cD}\to\widehat{\C}$ preserves the set $\mathfrak{S}$. 

Note that the homeomorphism $\mathfrak{g}_j$ pulls back the set $\mathscr{I}_j$ to the $p_j-$th roots of unity, $j\in\{1,2\}$ (cf. \cite[\S 4.1]{MM2}). It now follows from the definition of $F$ that $\mathfrak{X}_1(\mathscr{I}_1)$ and $\mathfrak{X}_2(\mathscr{I}_2)$ intersect precisely at the $r:=\gcd(p_1,p_2)$ points 
$$
\{\mathfrak{X}_1(\mathfrak{g}_1(w))=\mathfrak{X}_2(\mathfrak{g}_2(\overline{w})): w\ \textrm{ is an}\ r\textrm{-th root of unity}\}.
$$
One of these points is $\mathfrak{X}_1(1)=\mathfrak{X}_2(1)$. This implies that $\cD$ is a disjoint union of proper simply connected domains $\Omega_j\subsetneq\widehat{\C}$ with $\Int{\overline{\Omega_j}}=\Omega_j$, $j\in\{1,\cdots, k\}$. 

The fact that each $A_j$ induces an orientation-reversing self-homeomorphism of order two on $\partial\cH_j$, $j\in\{1,2\}$, implies that $F:\partial\cD\to\partial\cD$ is an orientation-reversing involution preserving $\mathfrak{S}$. It also follows from the above discussion that $F$ carries $\partial\Omega_j$ to $\partial\Omega_{k+1-j}$, $j\in\{1,\cdots,k\}$ (after possibly renumbering the $\Omega_j$s).

Finally, if $\gcd(p_1,p_2)=1$, then $\mathfrak{X}_1(\partial\cH_1)\cap\mathfrak{X}_2(\partial\cH_2)$ is a singleton, and hence $\cD$ is connected. 
\end{proof}

\subsection{Correspondence uniformizing a pair of genus zero orbifolds}\label{sec-proof_mainthm}

We refer the reader to \cite{BP01} for the notion of regular and limit sets for holomorphic correspondences.

\begin{proof}[Proof of Theorem~\ref{simult_unif_corr_thm}]
Let $\eta(z)=1/z$, and $\kappa:\{1,\cdots,k\}\to\{1,\cdots,k\},\ \kappa(j)= k+1-j$.
By Proposition~\ref{conf_mating_b_inv_prop} and \cite[\S 16]{LLM24}, there exist Jordan domains $\mathfrak{D}_j$ and rational maps $R_j$, $j\in\{1,\cdots,k\}$, such that the following hold.
\begin{enumerate}
\item $\eta:\mathfrak{D}_j\to\widehat{\C}\setminus\overline{\mathfrak{D}_{\kappa(j)}}$ is a homeomorphism.
\item $\partial\mathfrak{D}_j$ is a piecewise non-singular real-analytic curve.
\item  $R_j:\mathfrak{D}_j\to\Omega_j$ is a conformal isomorphism.
\item $F\vert_{\Omega_j}\equiv R_{\kappa(j)}\circ\eta\circ(R_j\vert_{\mathfrak{D}_j})^{-1}$.
\end{enumerate}
For notational convenience, we denote the domain of $R_j$ by $\widehat{\C}_j$. Consider the disjoint union 
$$
\mathfrak{U}\ :=\ \bigsqcup_{j=1}^k \widehat{\C}_{j}
$$
and define the maps
$$
\pmb{R}:\ \mathfrak{U}\longrightarrow \widehat{\C},\quad (z,j)\mapsto R_{j}(z),
$$
and
$$
\pmb{\eta}\ : \mathfrak{U}\longrightarrow \mathfrak{U},\quad (z,j)\mapsto (\eta(z),\kappa(j)).
$$
By construction, $\pmb{R}$ is a branched covering of degree $d+1$, and $\pmb{\eta}$ is a homeomorphism. 

Following \cite[\S 5.2]{MM2} (cf. \cite[\S 17]{LLM24}), one can lift the conformal mating $F$ by the degree $d+1$ branched cover $\pmb{R}:\mathfrak{U}\to\widehat{\C}$ to obtain a bi-degree $d$:$d$ correspondence $\mathfrak{C}^{\circledast}$ on $\mathfrak{U}$. This correspondence can be written explicitly as follows:
\begin{equation}
\Big\{(\mathfrak{u}_1,\mathfrak{u}_2)\in \mathfrak{C}^{\circledast}\subset\mathfrak{U}\times\mathfrak{U}: \frac{\pmb{R}(\mathfrak{u}_2)-\pmb{R}(\pmb{\eta}(\mathfrak{u}_1))}{\mathfrak{u}_2-\pmb{\eta}(\mathfrak{u}_1)}=0\Big\}.
\label{holo_corr_eqn}
\end{equation}
We then pass to the quotient 
$$
\mathfrak{W}\ :=\ \faktor{\mathfrak{U}}{\sim},
$$
where $\sim$ is the finite equivalence relation defined as
\begin{center}
For $z\in\partial\mathfrak{D}_i\subset\widehat{\C}_i$ and $w\in\partial\mathfrak{D}_j\subset\widehat{\C}_j,\ i\neq j$
\smallskip

$(z,i)\sim (w,j)\iff R_i(z)=R_j(w)$.
\end{center}
The space $\mathfrak{W}$ can be viewed as a compact, (possibly) noded Riemann surface.
It is easily checked that the maps $\pmb{R}, \pmb{\eta}$ descend to $\mathfrak{W}$, defining a bi-degree $d$:$d$ correspondence $\mathfrak{C}$ on $\mathfrak{W}$ (see \cite[Lemma~5.11]{MM2}).

We now set 
$$
\cT_j:=\pmb{R}^{-1}(\mathfrak{X}_j(\D))\subset \mathfrak{W},\ j\in\{1,2\}.
$$
The arguments of \cite[\S 5.1.2]{MM2} show that $\cT_j$ is the disjoint union of $p_j$ simply connected domains, each of which is mapped by $\pmb{R}$ onto $\mathfrak{X}_j(\D)$ with degree $n_j$ (see Figure~\ref{hecke_punc_sphere_conf_mating_fig}). Moreover, by \cite[Proposition~5.13]{MM2} (also see \cite[\S 5.1.2, \S 5.1.3]{MM2}), the forward branches of the correspondence $\mathfrak{C}$ act on $\cT_j$ by conformal automorphisms such that the group $G_j$ generated by these conformal automorphisms act properly discontinuously on $\cT_j$ with $\cT_j/G_j\cong_{\textrm{conf.}}\Sigma_j$. 

Finally, it readily follows from the dynamics of $\mathfrak{C}$ that the regular set $\Omega(\mathfrak{C})$ of $\mathfrak{C}$ is given by $\cT_1\sqcup\cT_2$. Hence, we conclude that the quotient $\Omega(\mathfrak{C})/\mathfrak{C}$ is biholomorphic to the disjoint union of $\Sigma_1$ and $\Sigma_2$.
\end{proof}

\section{A Teichm{\"u}ller space for punctured spheres}\label{bers_sec}

We will now describe parameter space consequences of the combination procedure explicated in Section~\ref{qf_corr_sec}. More precisely, we will consider a collection of algebraic correspondences such that each correspondence uniformizes a given rigid orbifold (i.e., an orbifold admitting only one complex structure) and a sphere with a given number of punctures. As the punctured sphere varies over its Teichm{\"u}ller space, we will obtain a copy of the Teichm{\"u}ller space of punctured spheres in the space of algebraic correspondences.
We now proceed to formalize this construction.

\subsection{Hecke orbifold and punctured spheres}
Let $\Gamma_1\cong\Z/2\Z*\Z/2n\Z$ be the Fuchsian group such that $\D/\Gamma_1$ is the Hecke orbifold $\Sigma_1$; i.e., the genus zero orbifold with one puncture, an order $2$ orbifold point and an order $2n$ orbifold point, for $n\geq 2$. Let $A_1:\overline{\D}\setminus\Int{\mathcal{H}_1}\to\overline{\D}$ be the factor Bowen-Series map of $\Sigma_1$. The map $A_1$ restricts to a degree $2n-1$ covering of $\mathbb{S}^1$, and has a unique critical point, of multiplicity $2n-1$ (see Figure~\ref{hecke_factor_bs_fig}). 
\begin{figure}[ht]
\captionsetup{width=0.98\linewidth}
\begin{tikzpicture}
\node[anchor=south west,inner sep=0] at (0.5,0) {\includegraphics[width=0.45\linewidth]{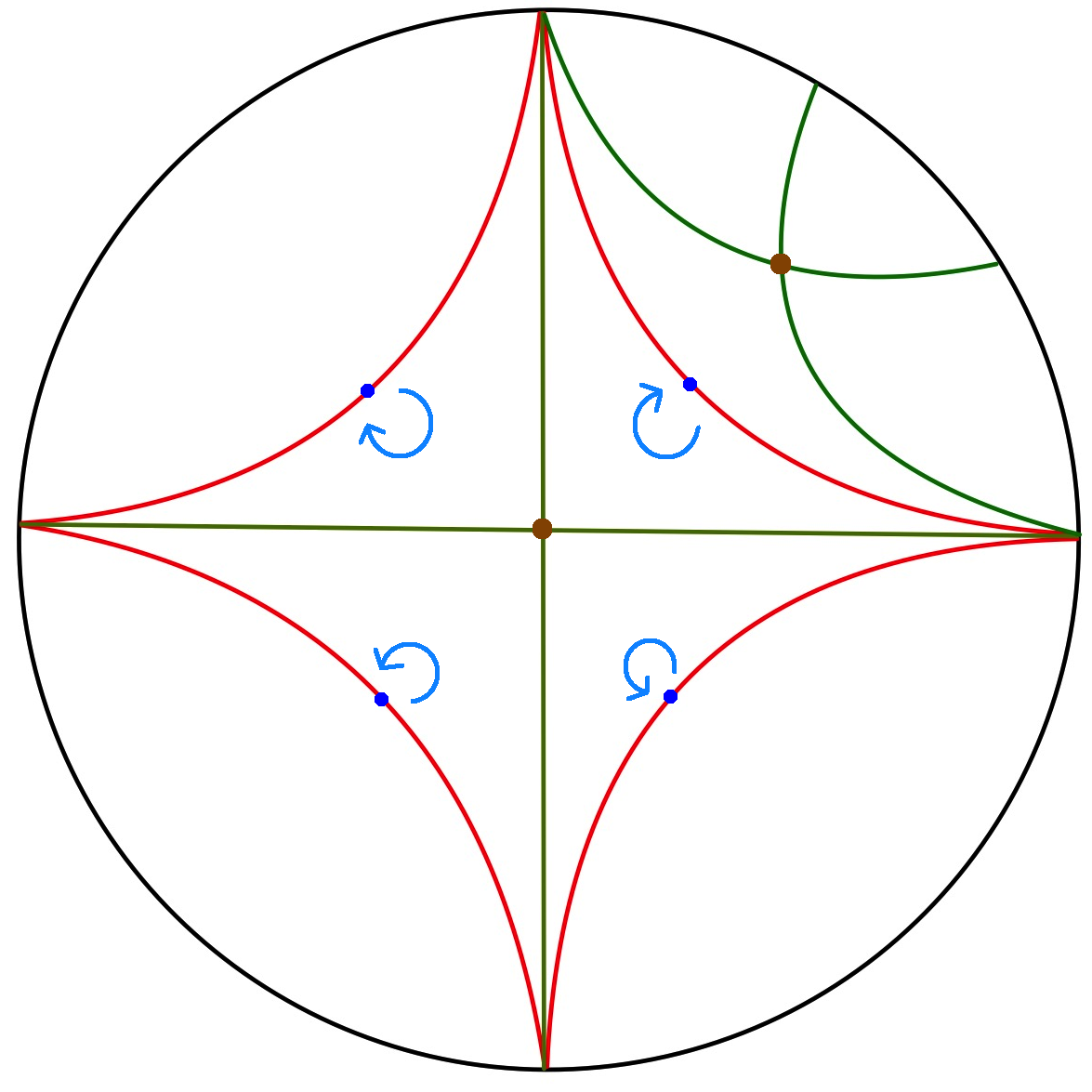}};
\node[anchor=south west,inner sep=0] at (7,0) {\includegraphics[width=0.45\linewidth]{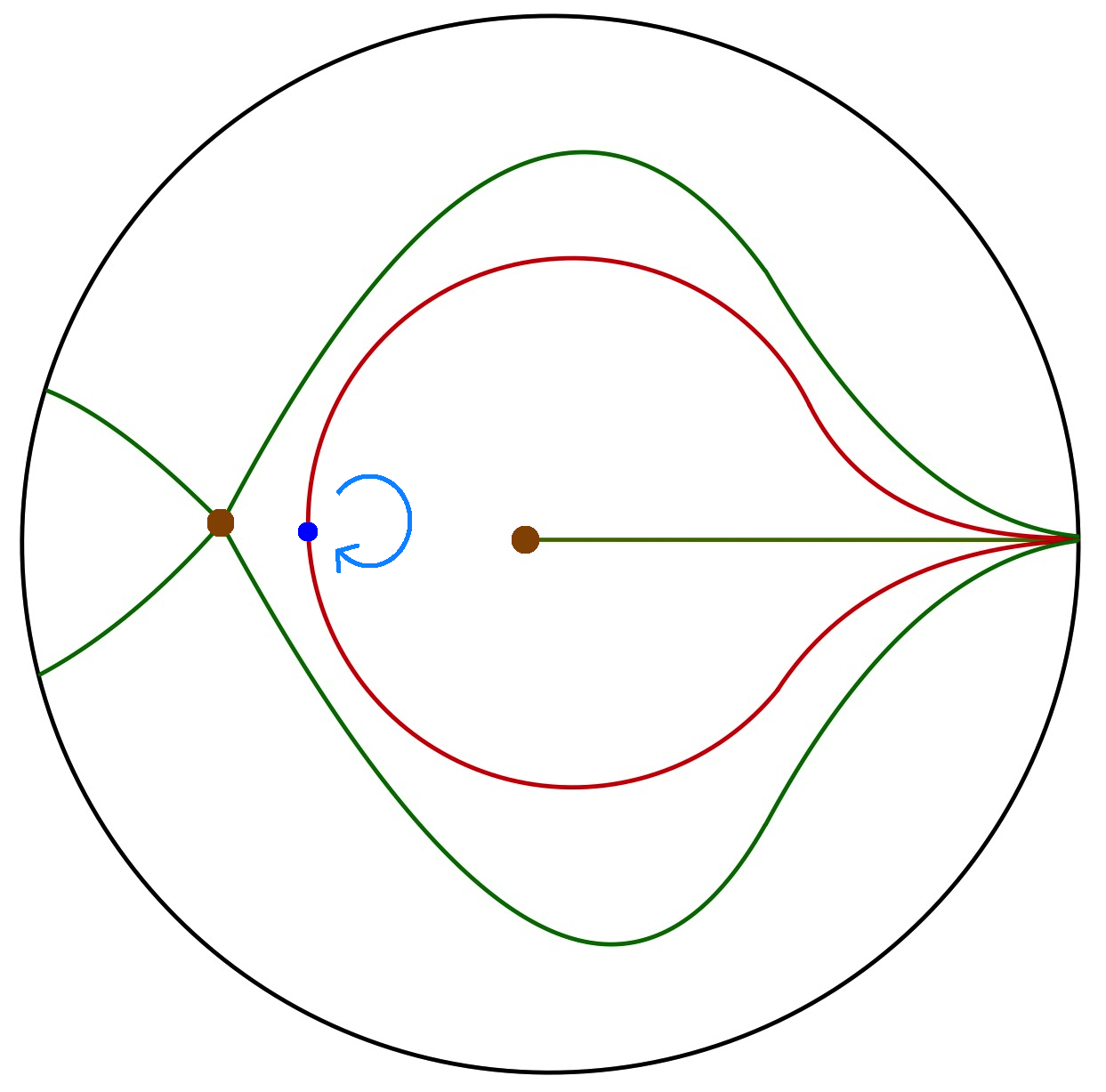}};
\node at (5.08,4.66) {\begin{small}$g_{1}$\end{small}};
\node at (2,4.5) {\begin{small}$g_{2}$\end{small}};
\node at (2,1.5) {\begin{small}$g_{3}$\end{small}};
\node at (4.8,1.5) {\begin{small}$g_{4}$\end{small}};
\node at (3.72,3.2) {\begin{small}$g_{1}$\end{small}};
\node at (2.9,3.2) {\begin{small}$g_{2}$\end{small}};
\node at (2.9,2.5) {\begin{small}$g_{3}$\end{small}};
\node at (3.72,2.5) {\begin{small}$g_{4}$\end{small}};
\node at (3.12,4) {\begin{small}$\Pi_1$\end{small}};
\node at (10,2.2) {$\mathcal{H}_1$};
\end{tikzpicture}
\caption{For the Hecke surface $\Sigma_1$ with an order $4$ orbifold point, the cyclic cover $\widetilde{\Sigma_1}$ is a sphere with one puncture and four order two orbifold points. Left: The preferred fundamental domain $\Pi_1$ and the action of the associated Bowen-Series map $A^{\textrm{BS}}_{\widetilde{\Sigma_1}}$ for $\widetilde{\Sigma_1}$ is shown.
The Bowen-Series map $A^{\textrm{BS}}_{\widetilde{\Sigma_1}}$ commutes with rotation by $\pi/2$. The vertical and horizontal radial lines in $\D$ and their pre-images under $g_{1}$ are displayed in green. 
Right: Depicted is the factor Bowen-Series map $A_1:=A_{\Sigma_1}^{\mathrm{fBS}}:\overline{\D}\setminus\Int{\mathcal{H}_1}\to\overline{\D}$, where $\mathcal{H}_1$ (which is an ideal monogon) is the image of $\Pi_1$ under the projection map $\D\to\D/\langle\zeta\mapsto i\zeta\rangle$. The map $A_1$ has a unique critical point of multiplicity three at the valence four vertex of the green graph.}
\label{hecke_factor_bs_fig}
\end{figure}

Further, let $\Gamma_2\in\mathrm{Teich}(S_{0,n+1})$. We equip $\Gamma_2$ with the fundamental domain $\Pi_2$ described in Section~\ref{sec-bs}, and set $\cH_2:=\Pi_2$.
Note that the factor Bowen-Series map of $\Gamma_2$ is the usual Bowen-Series map $A_2:\overline{\D}\setminus\Int{\cH_2}\to\overline{\D}$.
We denote the factor Bowen-Series map of any marked group $\Gamma\in\mathrm{Teich}(\Gamma_2)\equiv\mathrm{Teich}(S_{0,n+1})$ by $A_\Gamma:\overline{\D}\setminus\Int{\cH_\Gamma}\to\overline{\D}$. The map $A_\Gamma$ is a piecewise M{\"o}bius degree $2n-1$ circle covering (see Figure~\ref{punc_sphere_bs_fig}). 
\begin{figure}[ht]
\captionsetup{width=0.96\linewidth}
	\begin{tikzpicture}
		\node[anchor=south west,inner sep=0] at (0.5,0) {\includegraphics[width=0.45\linewidth]{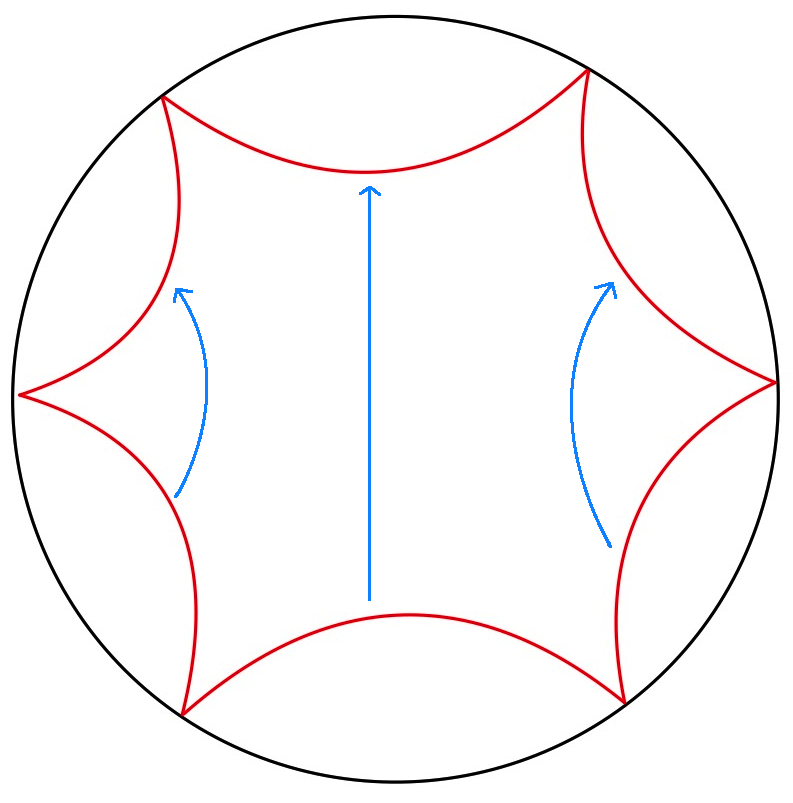}};
		\node at (3.8,3.6) {$\cH_\Gamma$};
		\node at (4.9,2.8) {$g_1$};
		\node at (2.84,2.8) {$g_2$};
		\node at (1.66,2.8) {$g_3$};
		\node at (5.4,4.08) {\begin{small}$g_{1}^{-1}$\end{small}};
		\node at (5.4,1.6) {\begin{small}$g_{1}$\end{small}};
		\node at (3.3,5.25) {\begin{small}$g_{2}^{-1}$\end{small}};
		\node at (3.5,0.6) {\begin{small}$g_{2}$\end{small}};
		\node at (1.24,4) {\begin{small}$g_{3}^{-1}$\end{small}};
		\node at (1.32,1.8) {\begin{small}$g_{3}$\end{small}};	
	\end{tikzpicture}
	\caption{Pictured is the preferred fundamental domain $\cH_\Gamma$ and the action of the associated Bowen-Series map $A_\Gamma:\overline{\D}\setminus\Int{\cH_\Gamma}\to\overline{\D}$ for a four times punctured sphere group.}
	\label{punc_sphere_bs_fig}
\end{figure}
We note that the Teichm{\"u}ller space of $\Sigma_2$ has complex dimension $n-2$.

\subsection{Conformal matings and associated correspondences}

By Proposition~\ref{conf_mating_exists_prop}, the maps $A_1$ and $A_\Gamma$ are conformally mateable. By Proposition~\ref{conf_mating_b_inv_prop}, the conformal mating $F:\overline{\cD}\to\widehat{\C}$ of $A_1$ and $A_\Gamma$ is a B-involution, where $\mathcal{D}$ is a simply connected inversive domain (see Figure~\ref{hecke_punc_sphere_top_mating_fig} for the domain of the topological mating between $A_1$ and $A_\Gamma$). The conformal mating $F$ is unique up to M{\"o}bius conjugacy.
It follows from the construction of $F$ that $\partial\cD$ is homeomorphic to a wedge of two circles with the unique cut-point being $\pmb{x}:=\mathfrak{X}_1(1)=\mathfrak{X}_\Gamma(1)$, where $\mathfrak{X}_1, \mathfrak{X}_\Gamma$ are the mating conjugacies (see Figure~\ref{hecke_punc_sphere_conf_mating_fig}). 
\begin{figure}[h!]
\captionsetup{width=0.96\linewidth}
\begin{tikzpicture}
\node[anchor=south west,inner sep=0] at (0,0) {\includegraphics[width=0.8\textwidth]{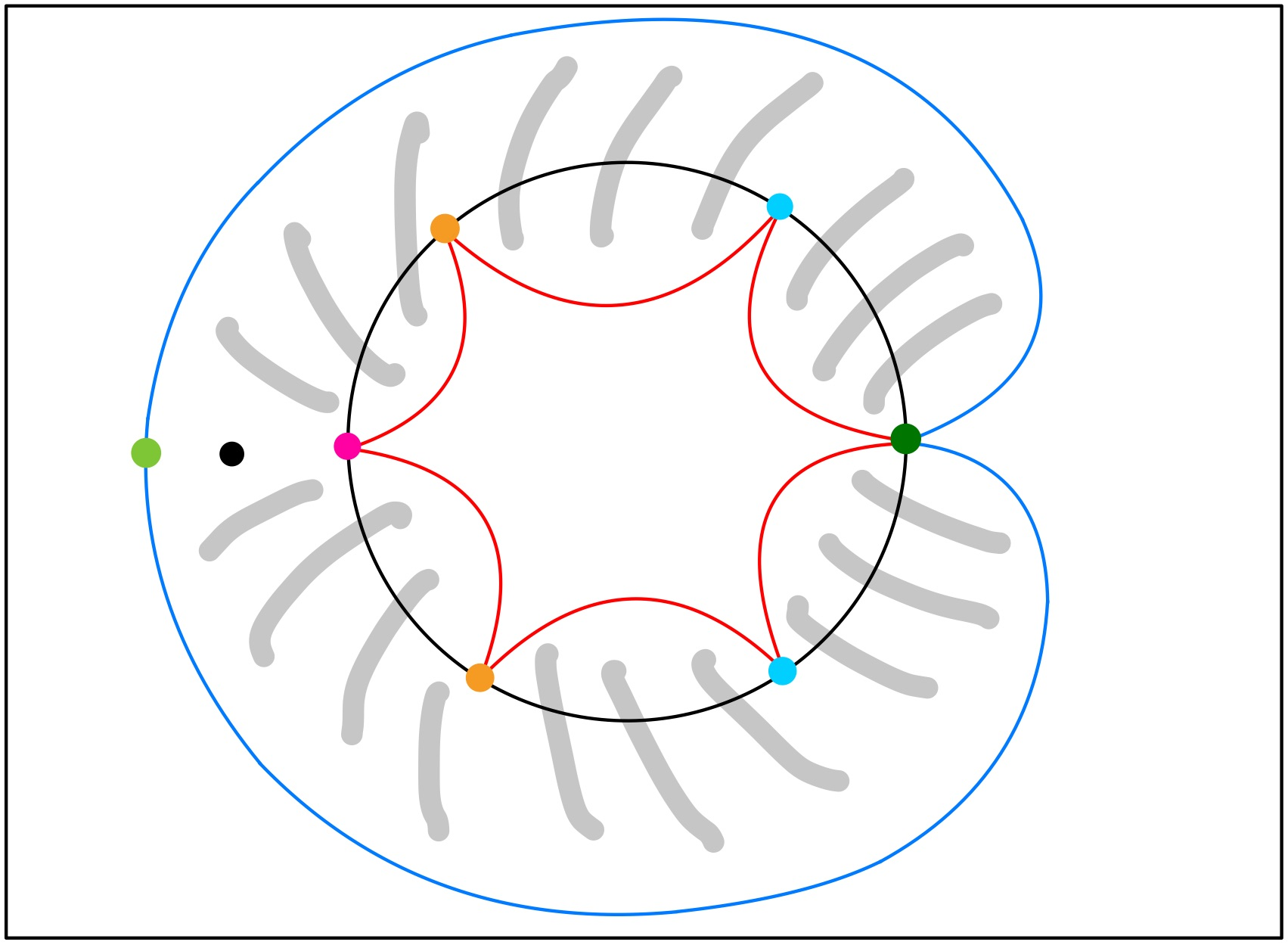}};
\node at (4.9,4) {$\mathfrak{g}_\Gamma^{-1}(\cH_\Gamma)$};
\node at (9,0.8) {$\widetilde{\mathfrak{g}}_1^{-1}(\cH_1)$};
\end{tikzpicture}
\caption{The shaded region is the domain of the topological mating between the factor Bowen-Series map $A_1$ of the $(2,6,\infty)$ genus zero orbifold and a Bowen-Series map $A_\Gamma$ of a four times punctured sphere group $\Gamma$. Here $\mathfrak{g}_\Gamma$ (respectively, $\widetilde{\mathfrak{g}}_1$) is a David homeomorphism from $\overline{\D}$ (respectively, from $\widehat{\C}\setminus\D$) onto $\overline{\D}$ that conjugates $z^{5}$ to $A_\Gamma$ (respectively, to $A_1$) on~$\mathbb{S}^1$. The unique critical point and some of the fixed points and $2$-cycles of the mating are marked.}
\label{hecke_punc_sphere_top_mating_fig}
\end{figure}

Let $R$ be a degree $2n$ rational map and $\mathfrak{D}$ be a Jordan domain such that
\begin{enumerate}[\upshape ({{R}}-1)]
\item\label{eta_inv_cond} $\eta(\mathfrak{D})=\widehat{\C}\setminus\overline{\mathfrak{D}}$, $\pm 1\in\partial\mathfrak{D}$,
\item\label{univ_cond} $R:\mathfrak{D}\to\mathcal{D}$ is a conformal isomorphism, and
\item\label{alg_formula} $F\vert_{\overline{\mathcal{D}}}\equiv R\circ\eta\circ (R\vert_{\overline{\mathfrak{D}}})^{-1}$
\end{enumerate}
(cf. \cite[Lemma~14.3]{LLM24}.) The rational map $R$ is unique in the following sense:
\begin{enumerate}[\upshape ({{U}}-1)]
\item\label{pre_comp} Given a conformal mating $F$ of $A_1$ and $A_\Gamma$, if there are two pairs $(R_j,\mathfrak{D}_j)$, $j\in\{1,2\}$, satisfying the above properties, then there exists a M{\"o}bius map $N$ commuting with $\eta$ such that $N(\mathfrak{D}_1)=\mathfrak{D}_2$ and $R_1=R_2\circ N$ (see the proof of \cite[Proposition~6.1]{MM2}).
\item\label{post_comp} Conjugating $F$ by a M{\"o}bius map $M$ amounts to post-composing $R$ with the map $M$.
\end{enumerate}

By Theorem~\ref{simult_unif_corr_thm}, the algebraic correspondence $\mathfrak{C}_\Gamma$ on the Riemann sphere defined as 
$$
\{(\mathfrak{u}_1,\mathfrak{u}_2)\in\mathfrak{C}_\Gamma\subset\widehat{\C}\times\widehat{\C}: \frac{R(\mathfrak{u}_2)-R(\eta(\mathfrak{u}_1))}{\mathfrak{u}_2-\eta(\mathfrak{u}_1)}=0\}
$$
simultaneously uniformizes the Hecke orbifold $\Sigma_1$ and the marked $(n+1)$-times punctured sphere $\D/\Gamma$. 

\subsection{Explicit description of the correspondences}\label{rat_norm_subsec}

We will now use normalizations~\ref{pre_comp} and~\ref{post_comp} to give an explicit formula for the rational map $R$. Let us denote the centralizer of $\eta(z)=1/z$ in $\mathrm{PSL}_2(\C)$ by $C(\eta)$.

Note that $A_1$ has a unique critical point. This critical point has multiplicity $2n-1$, and has $0$ as its associated critical value. Hence, the conformal mating $F$ also has a unique critical point $\pmb{c}$, of multiplicity $2n-1$. We pre-compose $R$ with $N\in C(\eta)$ and post-compose $R$ with $M\in\mathrm{PSL}_2(\C)$ such that $\pmb{c}=0, F(0)=\infty,$ and $0\in\mathfrak{D}$ with $R(0)=0$. With these normalizations, Relation~\ref{alg_formula} implies that $R$ maps $\infty$ to $\infty$ with local degree $2n$, and hence $R$ is a degree $2n$ polynomial.

The ideal boundary points of $\cH_\Gamma$ which are not fixed by $A_\Gamma$ form $(n-1)$ two-cycles. Hence, $F$ has $(n-1)$ two-cycles on $\partial\cD$, such that $F$ does not extend analytically to neighborhoods of these points (see Figure~\ref{hecke_punc_sphere_conf_mating_fig}).
The above observation and Relation~\ref{alg_formula} imply that there exist $c_1,\cdots,c_{n-1}\in\partial\mathfrak{D}$ such that for $j\in\{1,\cdots, n-1\}$, the points $c_j, \eta(c_j)$ are critical points of $R$, and they map under $R$ to these $(n-1)$ two-cycles of $F$.

We denote by $\pmb{x}_+$ the image of the fixed point of $A_\Gamma$ on $\partial\cH_\Gamma\setminus\{1\}$ under $\mathfrak{X}_\Gamma$ (see Figure~\ref{hecke_punc_sphere_conf_mating_fig}). By construction, $F(\pmb{x}_+)=\pmb{x}_+$, and $\pmb{x}_+$ is not a cut-point of $\partial\mathcal{D}$. Thus, the unique $R$-preimage of $\pmb{x}_+$ on $\partial\mathfrak{D}$ is a fixed point of $\eta$. After possibly pre-composing $R$ with $z\mapsto -z$, we can assume that $R(1)=\pmb{x}_+$. Moreover, the fact that $F$ does not extend analytically to a neighborhood of $\pmb{x}_+$ (because $A_\Gamma$ does not extend analytically to a relative neighborhood in $\overline{\D}$ of the ideal boundary points of $\cH_\Gamma$) implies that $1$ is a critical point of $R$; i.e., $R'(1)=0$. Thus, the finite critical points of $R$ are of the form
$$
\Big\{1,c_1, c_1^{-1},\cdots,c_{n-1},c_{n-1}^{-1}\Big\}.
$$
After possibly post-composing $R$ with a scaling, we have that 
$$
R(z)=z^{2n}+\sum_{j=1}^{2n}a_{2n-j}z^{2n-j}.
$$
As $R(0)=0$, we have $a_0=0$. The critical points of $R$ are the solutions of 
$$
R'(z)=2nz^{2n-1}+\sum_{j=1}^{2n-1}(2n-j)a_{2n-j}z^{2n-j-1}=0.
$$
By Vieta's formulas and the form of the critical points of $R$ given above, we have that
$$
a_1=-2n,\quad \textrm{and}\quad a_{2n-j}=-\frac{j+1}{2n-j}a_{j+1},\ j\in\{1,\cdots, n-1\}.
$$
Hence,
\begin{equation}
R(z)=z^{2n}-\sum_{j=1}^{n-1}\frac{j+1}{2n-j}a_{j+1} z^{2n-j}+\sum_{j=n}^{2n-2}a_{2n-j}z^{2n-j}-2nz.    
\label{R_reduced_form}
\end{equation}
Thus, $R$ depends only on the coefficients $a_2,\cdots,a_{n}$. 

We also note that the unique cut-point $\pmb{x}$ of $\partial\mathcal{D}$ is also a fixed point of $F$. Since $\partial\cD$ is topologically the wedge of two circles, there exist $\beta,\beta'\in\partial\mathfrak{D}$ ($\beta\neq \beta'$) such that $R(\beta)=R(\beta')=\pmb{x}$ (see Figure~\ref{hecke_punc_sphere_conf_mating_fig}). By Relation~\ref{alg_formula}, we have $\beta'=\eta(\beta)$. Further, the parabolic behavior of $A_1, A_\Gamma$ at $1$ translates to the fact that the two branches of $F$ at $\pmb{x}$ extend locally as tangent-to-identity parabolic germs. Hence, the complex number $\beta$ satisfies the equations 
$$
R(z)= R(\eta(z)),\quad \mathrm{and}\quad R'(\eta(z))\cdot \eta'(z)=R'(z).
$$
Note that the above equations are also satisfied by $z=1$. Thus, we have that $\deg_z\left(\gcd\left( R(z)- R(\eta(z)), R'(\eta(z))\cdot \eta'(z)-R'(z)\right)\right)\geq 2$. Hence the coefficients of $R$ satisfy the equation
$$
\mathrm{sRes}_1(R(z)- R(\eta(z)), R'(\eta(z))\cdot \eta'(z)-R'(z))=0,
$$
where $\mathrm{sRes}_j(P_1, P_2)$ denotes the $j$-th principal subresultant coefficient of two univariate polynomials $P_1,P_2\in\C[a_2,\cdots,a_n][z]$ (cf. \cite[\S 4.2.2]{BPC06}).

Therefore, the rational map $R$ lies on the $(n-2)$-dimensional algebraic variety
\begin{align}\label{alg_var_eqn}
\mathscr{V}:= \Big\{& \left(a_2,\cdots,a_{n}\right)\in\C^{n-1}: \\
\notag &\mathrm{sRes}_1\left(R(z)- R(\eta(z)), R'(\eta(z))\cdot \eta'(z)-R'(z)\right)=0\Big\}, 
\end{align}
where $R$ is given by Formula~\eqref{R_reduced_form}.

\subsection{Recovering marked groups from correspondences}\label{injective_subsec}

In what follows, we will denote the rational map $R$ associated with a marked group $\Gamma\in\mathrm{Teich}(S_{n+1})$ (constructed and normalized in Subsection~\ref{rat_norm_subsec}) by $R_\Gamma$.

We will explicate how the group $\Gamma$ and its preferred generating set can be recovered from the rational map $R_\Gamma$. Recall that the correspondence $\mathfrak{C}_\Gamma$ on $\widehat{\C}$ is defined as:
\begin{equation}
\{(\mathfrak{u}_1,\mathfrak{u}_2)\in\mathfrak{C}_\Gamma: \frac{R_\Gamma(\mathfrak{u}_2)-R_\Gamma(\eta(\mathfrak{u}_1))}{\mathfrak{u}_2-\eta(\mathfrak{u}_1)}=0\}.
\label{corr_eqn}
\end{equation}
The following result, which is an immediate consequence of Formulas~\ref{alg_formula} and~\eqref{corr_eqn}, underscores the role of $R_\Gamma$ as a mediator between the $F$-plane (where $F$ is the conformal mating between $A_1$ and $A_\Gamma$) and the $\mathfrak{C}_\Gamma$-plane.

\begin{prop}\label{lift_prop}
\noindent\begin{itemize}
\item Let $\mathfrak{u}_1\in\overline{\mathfrak{D}}$. Then, 
\begin{equation*}
(\mathfrak{u}_1,\mathfrak{u}_2)\in\mathfrak{C}_\Gamma \iff R_\Gamma(\mathfrak{u}_2)=F(R_\Gamma(\mathfrak{u}_1)),\ \mathfrak{u_2}\neq \eta(\mathfrak{u}_1).
\end{equation*}

\item Let $\mathfrak{u}_1\in\widehat{\C}\setminus\overline{\mathfrak{D}}$. Then, 
\begin{equation*}
(\mathfrak{u}_1,\mathfrak{u}_2)\in\mathfrak{C}_\Gamma \implies F(R_\Gamma(\mathfrak{u}_2))=R_\Gamma(\mathfrak{u}_1),\ \mathfrak{u_2}\neq \eta(\mathfrak{u}_1).
\end{equation*}
\end{itemize}
\end{prop}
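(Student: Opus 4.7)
My plan is to reduce both assertions to a direct application of the semi-conjugacy encoded in relation~\ref{alg_formula}, together with the observation that the defining equation of $\mathfrak{C}_\Gamma$ from~\eqref{corr_eqn} is equivalent, after cancelling the factor $\mathfrak{u}_2 - \eta(\mathfrak{u}_1)$ from the numerator, to the pair of conditions
\begin{equation*}
R_\Gamma(\mathfrak{u}_2) = R_\Gamma(\eta(\mathfrak{u}_1)) \quad \text{and} \quad \mathfrak{u}_2 \neq \eta(\mathfrak{u}_1).
\end{equation*}
Meanwhile, relation~\ref{alg_formula} rewrites as the semi-conjugacy
\begin{equation*}
F \circ R_\Gamma = R_\Gamma \circ \eta \quad \text{on } \overline{\mathfrak{D}},
\end{equation*}
and this is the only analytic input I will need.

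For the first bullet, I would take $\mathfrak{u}_1 \in \overline{\mathfrak{D}}$ and apply the semi-conjugacy directly at $\mathfrak{u}_1$ to substitute $R_\Gamma(\eta(\mathfrak{u}_1)) = F(R_\Gamma(\mathfrak{u}_1))$ in the boxed equation above. The claimed equivalence then drops out immediately.

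For the second bullet, the point $\mathfrak{u}_1$ lies outside $\overline{\mathfrak{D}}$, so the semi-conjugacy cannot be applied at $\mathfrak{u}_1$ itself. Instead, I would invoke condition~\ref{eta_inv_cond}, which places $\eta(\mathfrak{u}_1)$ in $\mathfrak{D}$, and apply the semi-conjugacy at $\eta(\mathfrak{u}_1)$ to get $F(R_\Gamma(\eta(\mathfrak{u}_1))) = R_\Gamma(\eta(\eta(\mathfrak{u}_1))) = R_\Gamma(\mathfrak{u}_1)$. Combining this with the correspondence equation $R_\Gamma(\mathfrak{u}_2) = R_\Gamma(\eta(\mathfrak{u}_1))$ and applying $F$ to both sides yields the claimed implication.

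There is no deep obstacle here; the only point worth flagging is that the second bullet is stated as an implication only. This asymmetry is genuine: for $\mathfrak{u}_1 \notin \overline{\mathfrak{D}}$, the point $R_\Gamma(\mathfrak{u}_1)$ generically lies outside $\overline{\mathcal{D}}$, so the equation $F(w) = R_\Gamma(\mathfrak{u}_1)$ has several solutions $w \in \overline{\mathcal{D}}$ (reflecting the branched covering structure of $F$ on $\overline{\mathcal{D}}$). The correspondence $\mathfrak{C}_\Gamma$ records only those $\mathfrak{u}_2$ with $R_\Gamma(\mathfrak{u}_2)$ equal to the specific solution $R_\Gamma(\eta(\mathfrak{u}_1))$, which is why the reverse direction cannot be recovered from the semi-conjugacy alone.
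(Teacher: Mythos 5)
Your proof is correct and uses exactly the ingredients the paper declares sufficient (the paper gives no explicit proof, calling the proposition an immediate consequence of Relation~\ref{alg_formula} and Equation~\eqref{corr_eqn}): you rewrite the defining equation of $\mathfrak{C}_\Gamma$ as $R_\Gamma(\mathfrak{u}_2)=R_\Gamma(\eta(\mathfrak{u}_1))$ with $\mathfrak{u}_2\neq\eta(\mathfrak{u}_1)$, and substitute via the semi-conjugacy $F\circ R_\Gamma=R_\Gamma\circ\eta$ on $\overline{\mathfrak{D}}$, applied at $\mathfrak{u}_1$ for the first bullet and at $\eta(\mathfrak{u}_1)\in\mathfrak{D}$ for the second. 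Your closing remark correctly identifies why the second bullet is only an implication: for $\mathfrak{u}_1\notin\overline{\mathfrak{D}}$, the equation $F(w)=R_\Gamma(\mathfrak{u}_1)$ has multiple solutions $w\in\overline{\mathcal{D}}$, and $\mathfrak{C}_\Gamma$ only picks out the preimage $w=R_\Gamma(\eta(\mathfrak{u}_1))$.
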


By our normalization, $R_\Gamma(1)$ is a fixed point of $F$ on the limit set $\Lambda$. Since the iterated $F$-pre-images of this fixed point are dense on $\Lambda$, it follows by Proposition~\ref{lift_prop} that the grand orbit of $1$ under the correspondence $\mathfrak{C}_\Gamma$ is dense on $\widetilde{\Lambda}:=R_\Gamma^{-1}(\Lambda)$. Thus, the limit set $\widetilde{\Lambda}$ of $\mathfrak{C}_\Gamma$ can be recognized without referring to the conformal mating of $A_1$ and $A_\Gamma$. The limit set $\Lambda$ of the conformal mating and the limit set $\widetilde{\Lambda}$ of the correspondences are shown in black in  Figure~\ref{hecke_punc_sphere_conf_mating_fig}. 

We now look at the regular set $\Omega(\mathfrak{C}_\Gamma)=\widehat{\C}\setminus\widetilde{\Lambda}$. It consists of the sets 
$$
\mathcal{T}_\Gamma:=R_\Gamma^{-1}(\mathfrak{X}_\Gamma(\D))\quad \mathrm{and}\quad \mathcal{T}_1:=R_\Gamma^{-1}(\mathfrak{X}_1(\D)).
$$
Relation~\ref{alg_formula} and the fact that $\mathfrak{X}_\Gamma(\D), \mathfrak{X}_1(\D)$ are completely invariant under $F$ together imply that $\widetilde{\Lambda}, \mathcal{T}_\Gamma, \mathcal{T}_1$ are $\eta$-invariant.
\begin{figure}[h!]
\captionsetup{width=0.9\linewidth}
\begin{tikzpicture}
\node[anchor=south west,inner sep=0] at (0,8.4) {\includegraphics[width=0.99\textwidth]{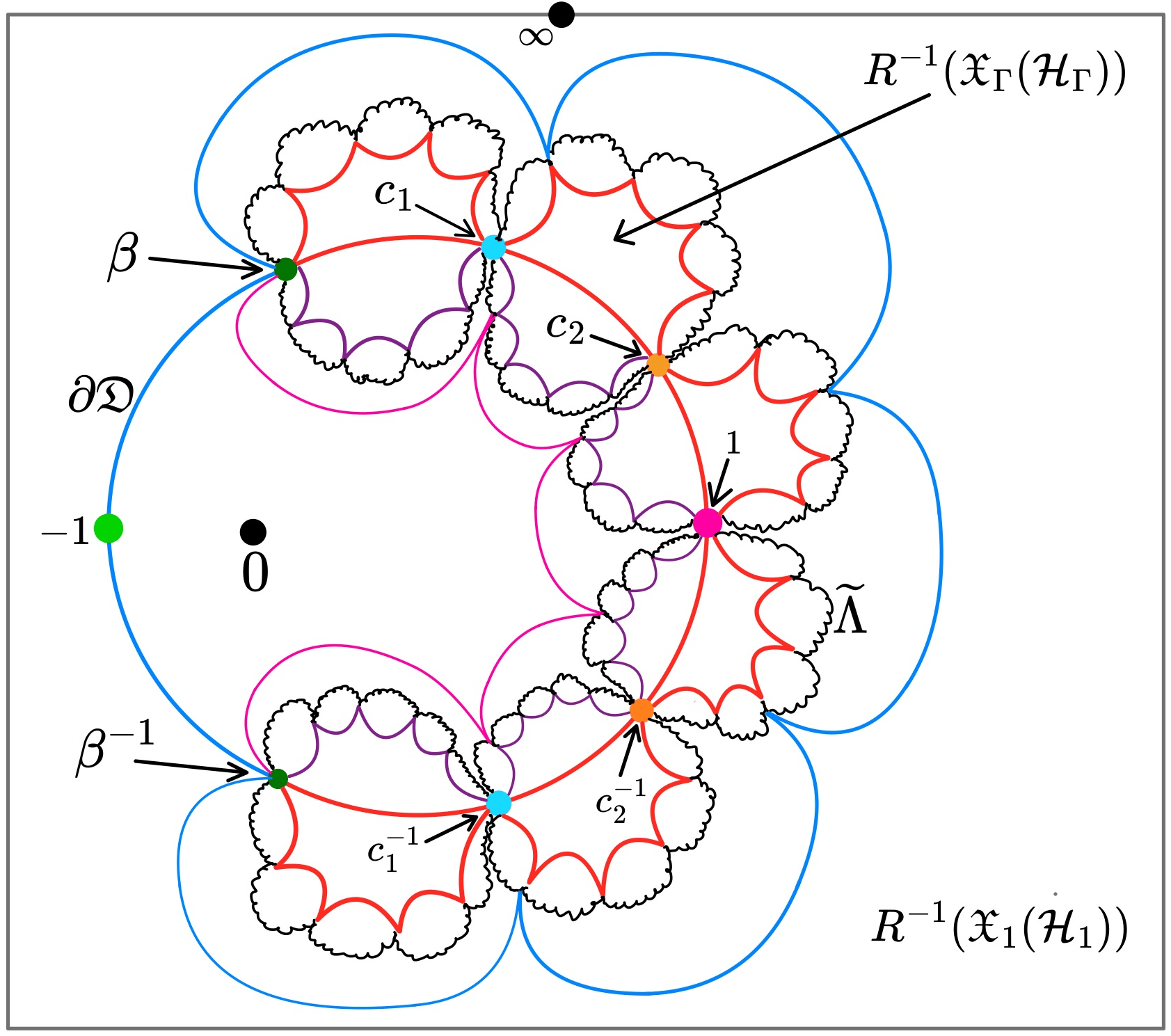}};
\node[anchor=south west,inner sep=0] at (0.8,-0.84) {\includegraphics[width=0.832\textwidth]{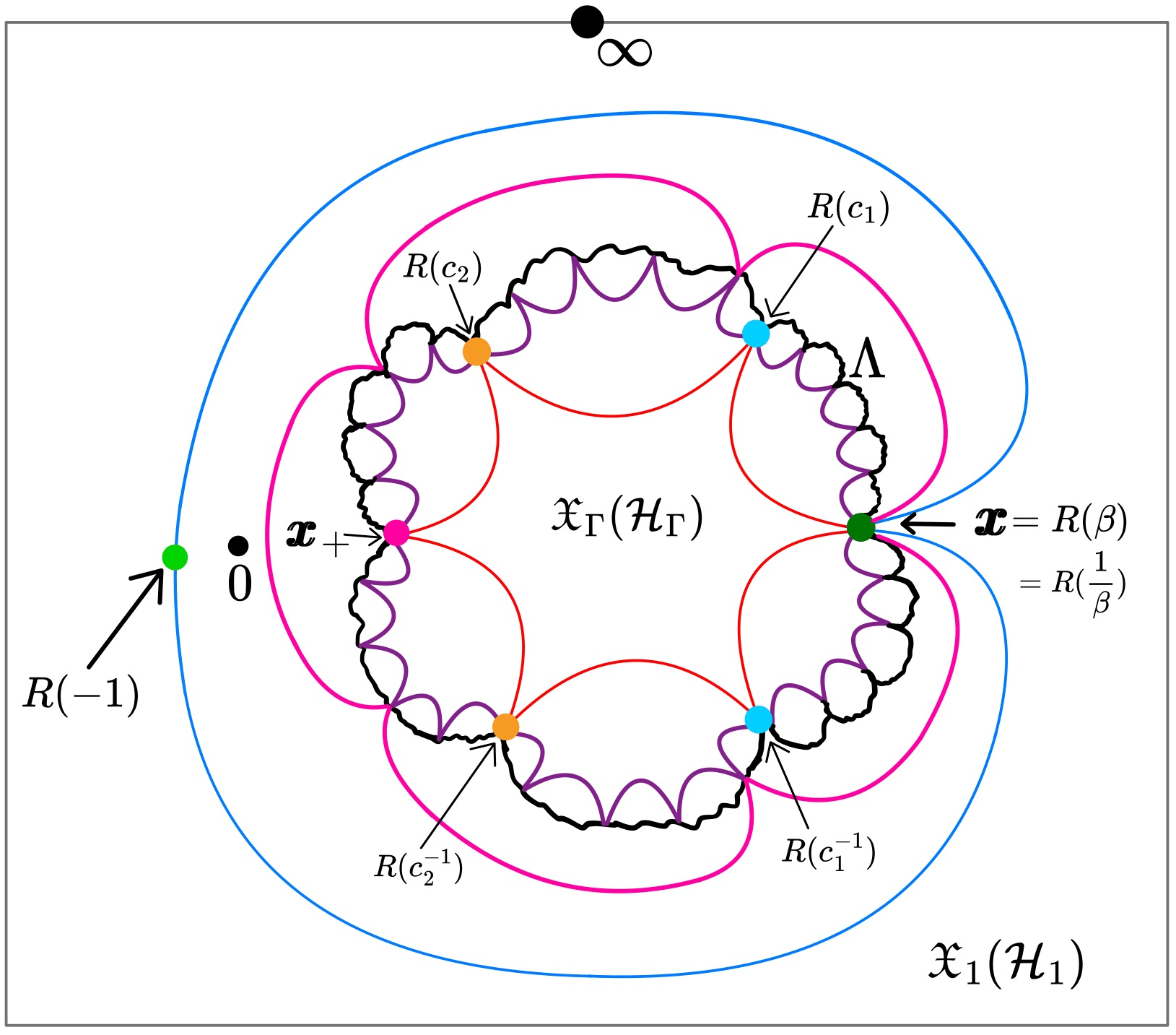}};
\draw [->, line width=0.5mm] (1.88,9.6) to [out=225,in=135] (1.96,7.2);
\node at (1.12,8.84) {\begin{Large}$R$\end{Large}};
\node at (7.5,11.2) {\begin{small}$\widetilde{\Lambda}$\end{small}};
\end{tikzpicture}
\caption{Illustrated are the correspondence plane (top) and the conformal mating plane (bottom) for $n=3$.}
\label{hecke_punc_sphere_conf_mating_fig}
\end{figure}

Since $\infty\in \mathfrak{X}_1(\cH_1)$ and $R_\Gamma$ is a polynomial, it follows that $\mathcal{T}_1$ is a simply connected domain. In particular, $\mathcal{T}_1$ is the unique component of $\widehat{\C}\setminus\widetilde{\Lambda}$ containing a critical value of $R_\Gamma$.
Further, the action of $\mathfrak{C}_\Gamma$ on $\mathcal{T}_1$ is generated by $\eta$ and the $2n-1$ non-trivial deck transformations of the branched covering $R_\Gamma:\mathcal{T}_1\to\mathfrak{X}_1(\D)$ (which is fully branched over $\infty$ and is unbranched otherwise).
By the proof of \cite[Proposition~5.13]{MM2}, these conformal automorphisms of $\mathcal{T}_1$ generate a group that acts properly discontinuously on $\mathcal{T}_1$, and the corresponding quotient is biholomorphic to the Hecke orbifold~$\Sigma_1$.

On the other hand, since $R_\Gamma$ has no critical value in $\mathfrak{X}_\Gamma(\D)$, it follows that $\mathcal{T}_\Gamma$ is the union of $2n$ simply connected domains each of which maps conformally onto $\mathfrak{X}_\Gamma(\D)$ under $R_\Gamma$. Hence, the deck transformations of the covering map $R_\Gamma:\mathcal{T}_\Gamma\to\mathfrak{X}_\Gamma(\D)$ permute the $2n$ components of $\mathcal{T}_\Gamma$ transitively. As before, the action of $\mathfrak{C}_\Gamma$ on $\mathcal{T}_\Gamma$ is generated by $\eta$ and the above deck transformations. It is easy to see from the dynamical structure of the conformal mating plane and $\eta$-invariance of $\mathcal{T}_1$ that the components of $\mathcal{T}_1$ can be enumerated as $U_1,\cdots,U_{2n}$ satisfying the following properties.
\begin{enumerate}
\item $\eta(U_j)=U_{2n+1-j}$, $j\in\{1,\cdots,n\}$.
\item $\partial U_1$ (respectively, $\partial U_{2n}$) touches $\partial U_2$ (respectively, $\partial U_{2n-1}$) only.
\item $\partial U_j$ touches $\partial U_{j-1}$ and $\partial U_{j+1}$ only, for $j\in\{2,\cdots,2n-1\}$.
\item The finite critical points of $R_\Gamma$ are the points of intersections of various $\partial U_j$s.
\end{enumerate}
(See Figure~\ref{hecke_punc_sphere_conf_mating_fig}.)
Further, Relation~\ref{alg_formula} and the fact that the conformal mating $F$ of $A_1$ and $A_\Gamma$ has exactly two parabolic fixed points on its limit set $\Lambda$ imply that there exists a unique $\beta\in\partial U_1$ such that
$$
R_\Gamma(\beta)= R_\Gamma(\beta^{-1}),\quad \mathrm{and}\quad R'(\eta(\beta))\cdot \eta'(\beta)=R'(\beta).
$$

Finally, construct a Jordan curve $\mathfrak{J}$ by connecting the finite critical points of $R_\Gamma$ and $\beta, \beta^{-1}$ by hyperbolic geodesics in the simply connected domains $U_1,\cdots,U_{2n}, \mathcal{T}_1$. By construction, $\eta(\mathfrak{J})=\mathfrak{J}$. It now follows from the relation between the conformal mating and correspondence planes and the normalization of $R_\Gamma$ that the map $R_\Gamma$ is univalent on one of the complementary components of $\mathfrak{J}$ (in this case, it is the component of $\widehat{\C}\setminus\mathfrak{J}$ containing the origin), and this component coincides with the domain $\mathfrak{D}$ such that $\overline{R_\Gamma(\mathfrak{D})}$ is the domain of definition of the conformal mating $F$ of $A_1$ and $A_\Gamma$. Thus, $R_\Gamma$ completely determines the conformal mating of $A_1$ and $A_\Gamma$. In particular, the Bowen-Series map of $\Gamma$ can be recovered from $R_\Gamma$. Since the Bowen-Series map of $\Gamma$ encodes a preferred generating set of $\Gamma$, we can recover the group $\Gamma$ and its preferred generating set from $R_\Gamma$.

As a consequence of the preceding discussion, we have the following result.

\begin{prop}\label{injective_prop}
The map 
\begin{align*}
\pmb{\Psi}:\ & \mathrm{Teich}(S_{0,n+1})\to\mathscr{V}\subset\C^{n-1}\\
&\hspace{2cm} \Gamma\mapsto R_\Gamma
\end{align*}
is injective. Here, $\mathscr{V}$ is the $(n-2)-$dimensional algebraic variety defined by Equation~\eqref{alg_var_eqn}.
\end{prop}

\noindent\textbf{Figure~\ref{hecke_punc_sphere_conf_mating_fig}:} The conformal mating $F$ is defined on the `pinched annulus' $\widehat{\C}\setminus\Int{\left(\mathfrak{X}_\Gamma(\cH_\Gamma)\cup\mathfrak{X}_1(\cH_1)\right)}$, where $\mathfrak{X}_\Gamma(\cH_\Gamma)$ is the bounded region enclosed by the red hexagon, and $\mathfrak{X}_1(\cH_1)$ is the region outside the blue monogon. The black fractal Jordan curve is the limit set $\Lambda$ of the conformal mating $F$. The bounded complementary component of $\Lambda$ is $\mathfrak{X}_\Gamma(\D)$, while the unbounded complementary component is $\mathfrak{X}_1(\D)$. The first preimages of $\mathfrak{X}_\Gamma(\cH_\Gamma)$, $\mathfrak{X}_1(\cH_1)$ under $F$ are also shown.

The set $\mathfrak{X}_1(\cH_1)$ (respectively, $\mathfrak{X}_1(\D)$) has a connected preimage under $R_\Gamma$ in the correspondence plane, and it maps onto $\mathfrak{X}_1(\cH_1)$ (respectively, $\mathfrak{X}_1(\D)$) as a $6:1$ branched cover ramified only at $\infty$. On the other hand, $R_\Gamma^{-1}(\mathfrak{X}_\Gamma(\cH_\Gamma))$ (respectively, $R_\Gamma^{-1}(\mathfrak{X}_\Gamma(\D))$) has six connected components, each of which maps conformally onto $\mathfrak{X}_\Gamma(\cH_\Gamma)$ (respectively, $\mathfrak{X}_\Gamma(\D)$). The limit set $\widetilde{\Lambda}=R_\Gamma^{-1}(\Lambda)$ of the correspondence $\mathfrak{C}_\Gamma$ is a chain of six Jordan curves that divides the sphere into the fully invariant sets $\mathcal{T}_\Gamma=R_\Gamma^{-1}(\mathfrak{X}_\Gamma(\D))\quad \mathrm{and}$ and $ \mathcal{T}_1=R_\Gamma^{-1}(\mathfrak{X}_1(\D))$.
\subsection{Holomorphic injection of $\mathrm{Teich}(S_{0,n+1})$ into a space of correspondences}\label{holo_subsection}

\begin{prop}\label{holomorphic_prop}
The map 
\begin{align*}
\pmb{\Psi}:\ & \mathrm{Teich}(S_{0,n+1})\to\mathscr{V}\subset\C^{n-1}\\
&\hspace{2cm} \Gamma\mapsto R_\Gamma
\end{align*}
is holomorphic, where $\mathrm{Teich}(S_{0,n+1})$ is identified with the Bers slice of the group $\Gamma_2$.
\end{prop}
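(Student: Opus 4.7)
The plan is to trace holomorphic dependence through three steps: from $\Gamma$ to the Bowen-Series map $A_\Gamma$, from $(A_1,A_\Gamma)$ to the conformal mating $F_\Gamma$, and from $F_\Gamma$ to the polynomial $R_\Gamma$. For the first step, the Bers slice identification equips $\mathrm{Teich}(S_{0,n+1})$ with a holomorphic family of $\mathrm{PSL}_2(\C)$-representatives of the generators of $\Gamma$; hence the M\"obius pieces of $A_\Gamma$ depend holomorphically on $\Gamma$, and the family $\{A_\Gamma\}$ is obtained from the basepoint $A_{\Gamma_2}$ by a holomorphic family of quasiconformal conjugacies $\phi_\Gamma:\overline{\D}\to\overline{\D}$ whose Beltrami coefficients are uniformly bounded away from~$1$.

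For the second step, I would propagate this dependence through the mating construction in the proof of Proposition~\ref{conf_mating_exists_prop}. Recall that $F_\Gamma=H_\Gamma\circ\widetilde{F}_\Gamma\circ H_\Gamma^{-1}$, where $H_\Gamma$ is the normalized David homeomorphism of $\widehat{\C}$ whose Beltrami coefficient $\mu_\Gamma$ is obtained by pulling back the standard complex structure via $\mathfrak{g}_\Gamma=\phi_\Gamma\circ\mathfrak{g}_{\Gamma_2}$ on $\D$ and via the fixed $\widetilde{\mathfrak{g}}_1$ on $\widehat{\C}\setminus\overline{\D}$. Since $\phi_\Gamma$ is uniformly quasiconformal and depends holomorphically on $\Gamma$, the $\Gamma$-dependence of $\mu_\Gamma$ is carried entirely by a uniformly quasiconformal factor, while the non-quasiconformal (David) part admits the \emph{fixed} model given by $\mathfrak{g}_{\Gamma_2}$ and $\widetilde{\mathfrak{g}}_1$. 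Consequently, $H_\Gamma$ factors as $H_{\Gamma_2}$ composed with the normalized integration of the pure $\phi_\Gamma$-contribution, and the latter depends holomorphically on $\Gamma$ by the parametric Ahlfors--Bers theorem. This yields holomorphic dependence of the mating conjugacies $\mathfrak{X}_\Gamma,\mathfrak{X}_1^\Gamma$ and of $F_\Gamma$ on $\Gamma$; in particular, the critical point $\pmb{c}=0$, the parabolic fixed point $\pmb{x}$, and the $(n-1)$ marked two-cycles on $\partial\mathcal{D}_\Gamma$ all move holomorphically.

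For the third step, I would extract $R_\Gamma$. By \eqref{R_reduced_form}, $R_\Gamma$ is determined via Vieta's formulas by its finite critical points $\{1,c_j,c_j^{-1}\}_{j=1}^{n-1}$. These are characterized by the algebraic conditions that $c_j,c_j^{-1}$ map under $R_\Gamma$ onto the marked periodic points on $\partial\mathcal{D}_\Gamma$ furnished by $F_\Gamma$; since these target points vary holomorphically with $\Gamma$ (by step two) and the combinatorics of the correspondence between critical points of $R_\Gamma$ and marked points on $\partial\mathcal{D}_\Gamma$ is constant over the connected Bers slice, the implicit function theorem produces holomorphic functions $c_1(\Gamma),\ldots,c_{n-1}(\Gamma)$ and hence holomorphic coefficients $a_2(\Gamma),\ldots,a_n(\Gamma)$.

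The principal obstacle is step two: holomorphic dependence of David solutions on a parameter is not automatic in the generality of the David Integrability Theorem, unlike the classical measurable Riemann mapping theorem. The situation here is rescued by the fact that the non-quasiconformal part of $\mu_\Gamma$ is $\Gamma$-independent, so the $\Gamma$-variation reduces to a uniformly quasiconformal deformation, for which the parametric Ahlfors--Bers theorem applies directly. Steps one and three are comparatively routine once this technical point is in place.
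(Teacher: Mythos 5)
Your reading of the main obstruction is exactly right: the David Integrability Theorem does not come with a parametric version, so one must recognize that the $\Gamma$-variation is quasiconformal and push it through the classical Ahlfors--Bers machinery. Your step~two is morally the same as the paper's, though stated more loosely: rather than ``factoring'' the David homeomorphism $H_\Gamma$ through $H_{\Gamma_2}$, the paper sidesteps David maps entirely at this stage by defining the Beltrami coefficient $\mu_\Gamma:=(\mathfrak{X}_{\Gamma_2})_*(\mu_\rho)$ directly on the $F_{\Gamma_2}$-plane (supported on $\mathfrak{X}_{\Gamma_2}(\D)$, zero elsewhere), solving the classical Beltrami equation to get $\psi_\Gamma$, and observing that $\psi_\Gamma\circ F_{\Gamma_2}\circ\psi_\Gamma^{-1}$ is the normalized $F_\Gamma$. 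Your proposed $K_\Gamma=H_\Gamma\circ H_{\Gamma_2}^{-1}$ is, after unwinding, equal to $\mathfrak{X}_\Gamma\circ\phi_\Gamma\circ\mathfrak{X}_{\Gamma_2}^{-1}$ on $\mathfrak{X}_{\Gamma_2}(\D)$ and conformal outside, which is precisely the paper's $\psi_\Gamma$; but you should make the removability of the limit set explicit to justify that the two pieces glue to a global quasiconformal map.

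The genuine gap is in your step~three. You propose to recover $R_\Gamma$ from $F_\Gamma$ by pinning down the critical points $c_1,\dots,c_{n-1}$ as solutions of the system ``critical point of $R$, mapping to the marked two-cycle of $F_\Gamma$,'' and then invoking the implicit function theorem. This requires a non-degeneracy (transversality) check that you do not supply and that is not obvious: the coefficients $a_2,\dots,a_n$ and the critical points $c_j$ are simultaneously unknown, the constraints are coupled via the (still unknown) domain $\mathfrak{D}_\Gamma$, and the algebraic variety~\eqref{alg_var_eqn} on which $R_\Gamma$ lives need not a priori be smooth or cut out transversally by your conditions. The paper avoids this entirely by a second quasiconformal conjugation: it pulls $\mu_\Gamma$ back along the fixed base map $R_{\Gamma_2}$ to get an $\eta$-invariant coefficient $\widehat{\mu}_\Gamma:=R_{\Gamma_2}^*(\mu_\Gamma)$, solves it with $\widehat{\psi}_\Gamma$ normalized at $\pm1,\infty$, and then observes via Weyl's lemma that $\psi_\Gamma\circ R_{\Gamma_2}\circ\widehat{\psi}_\Gamma^{-1}$ is rational. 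The uniqueness statements~\ref{pre_comp} and~\ref{post_comp} then force $R_\Gamma=\psi_\Gamma\circ R_{\Gamma_2}\circ\widehat{\psi}_\Gamma^{-1}$ and $\mathfrak{D}_\Gamma=\widehat{\psi}_\Gamma(\mathfrak{D}_{\Gamma_2})$, so holomorphic dependence of $R_\Gamma$ is inherited from that of $\psi_\Gamma$ and $\widehat{\psi}_\Gamma$ with no implicit function theorem and no counting of equations. This lift to the correspondence plane is the key idea that your outline is missing, and without it the holomorphicity of the coefficients $a_j(\Gamma)$ is not actually established.
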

\begin{figure}[ht]
\captionsetup{width=0.96\linewidth}
\begin{tikzpicture}
\node[anchor=south west,inner sep=0] at (0,0) {\includegraphics[width=0.9\linewidth]{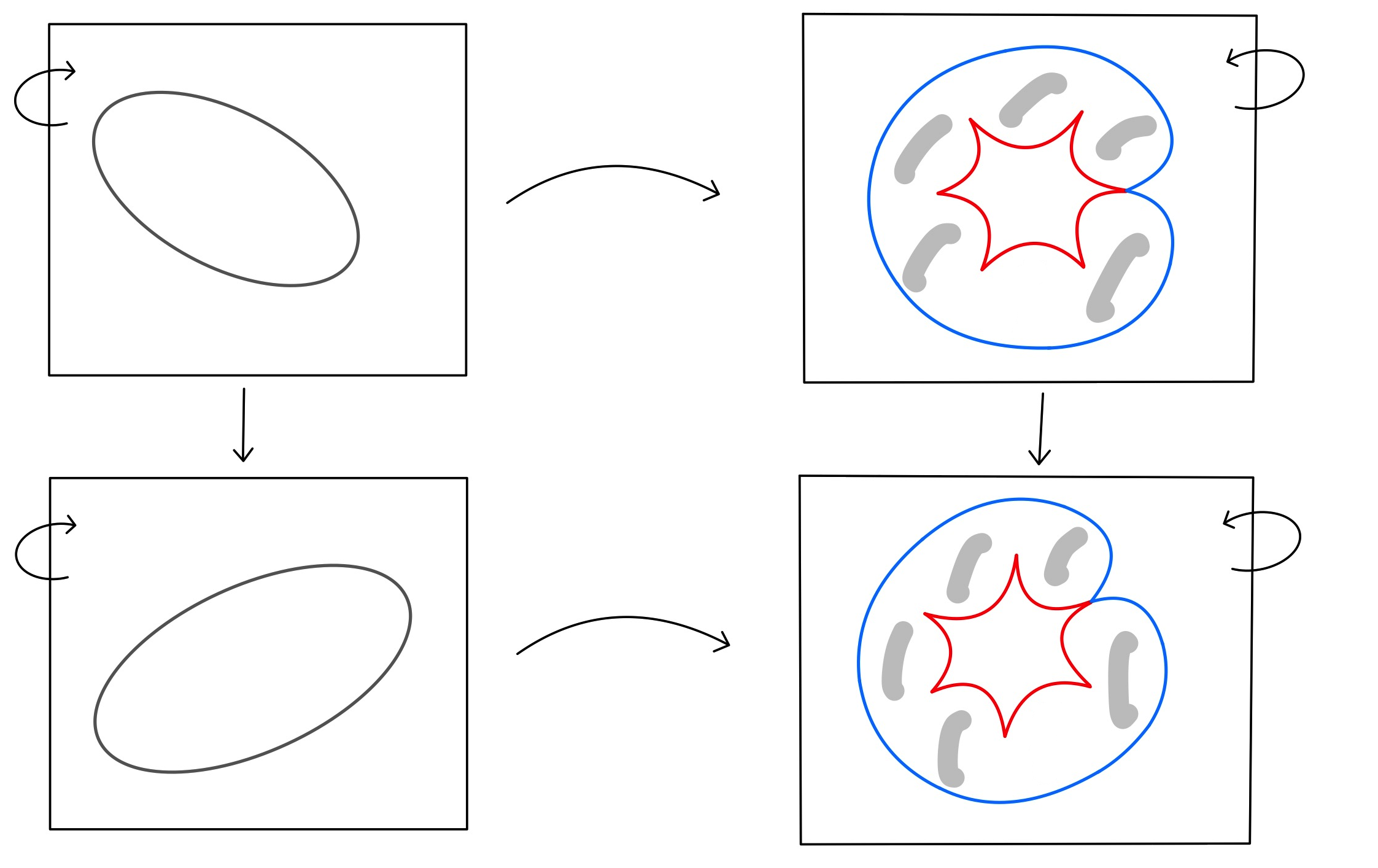}};
\node at (5,6.1) {$R_{\Gamma_2}$};
\node at (5,2.4) {$R_\Gamma$};
\node at (11.08,6.66) {\begin{small}$F_{\Gamma_2}$\end{small}};
\node at (11,2.8) {\begin{small}$F_{\Gamma}$\end{small}};
\node at (0,2.8) {\begin{small}$\eta$\end{small}};
\node at (0,6.6) {\begin{small}$\eta$\end{small}};
\node at (2.36,3.66) {\begin{small}$\widehat{\psi}_\Gamma$\end{small}};
\node at (8.92,3.66) {\begin{small}$\psi_\Gamma$\end{small}};
\node at (2.24,1.75) {\begin{small}$\mathfrak{D}_{\Gamma}$\end{small}};
\node at (2.1,5.66) {\begin{small}$\mathfrak{D}_{\Gamma_2}$\end{small}};
\node at (8.6,4.72) {\begin{small}$\cD_{\Gamma_2}$\end{small}};
\node at (8.66,1) {\begin{small}$\cD_{\Gamma}$\end{small}};
\node at (4.5,0.8) {\begin{small}$(\widehat{\C},\mu_0)$\end{small}};
\node at (4.5,4.6) {\begin{small}$(\widehat{\C},\widehat{\mu}_\Gamma)$\end{small}};
\node at (11,0.8) {\begin{small}$(\widehat{\C},\mu_0)$\end{small}};
\node at (11,4.6) {\begin{small}$(\widehat{\C},\mu_\Gamma)$\end{small}};
\end{tikzpicture}
\caption{The relation between the uniformizing rational maps $R_{\Gamma_2}$ and $R_\Gamma$ is shown.}
\label{qc_deform_fig}
\end{figure}
\begin{proof}
The Bers slice $\mathcal{B}(\Gamma_2)$ of $\Gamma_2$ consists of group isomorphisms $\rho:\Gamma_2\to\Gamma$ given by
$$
\rho(g)=\psi_\rho\circ g\circ \psi_\rho^{-1},\quad g\in\Gamma_2,
$$
where $\psi_\rho$ is a quasiconformal homeomorphism of $\widehat{\C}$ that is conformal on $\widehat{\C}\setminus\overline{\D}$ (cf. \cite[\S 5.10]{Marden}). The quasiconformal maps $\psi_\rho$ depend holomorphically on the complex coordinates on the Bers slice $\mathcal{B}(\Gamma_2)$. We denote the standard complex structure on the Riemann sphere by $\mu_0$, and set $\mu_\rho:=\psi_\rho^*(\mu_0)$. By construction, $\mu_\rho$ depends holomorphically on $\rho$, and is $\Gamma_2$-invariant. Note that the quasiconformal maps $\psi_\rho$ also conjugates $A_{\Gamma_2}$ to $A_\Gamma$.

We denote the normalized conformal mating of $A_1$ and $A_\Gamma$ by $F_\Gamma$, and the associated mating conjugacies by $\mathfrak{X}_1, \mathfrak{X}_\Gamma$. The Beltrami coefficient $\mu_\rho$ can be pushed forward to the $F_{\Gamma_2}$-plane by the mating conjugacy $\mathfrak{X}_{\Gamma_2}$ to yield
$$
\mu_{\Gamma}:=\begin{cases}
\left(\mathfrak{X}_{\Gamma_2}\right)_*(\mu_\rho)\quad \textrm{on}\quad \mathfrak{X}_{\Gamma_2}(\D),\\
\quad 0\hspace{2cm} \textrm{elsewhere}.
\end{cases}
$$ 
Clearly, $\mu_\Gamma$ is $F_{\Gamma_2}$-invariant, and depends holomorphically on the marked group $\Gamma$. Consequently, the quasiconformal homeomorphisms $\psi_\Gamma$ solving the Beltrami equation with coefficient $\mu_\Gamma$ depend holomorphically on $\Gamma$. Further, if we normalize $\psi_\Gamma$ appropriately, then $\psi_\Gamma\circ F_{\Gamma_2}\circ \psi_\Gamma^{-1}$ is the normalized conformal mating $F_\Gamma$ of $A_1$ and $A_\Gamma$ with mating conjugacies $\psi_\Gamma\circ\mathfrak{X}_1$ and $\psi_\Gamma\circ\mathfrak{X}_{\Gamma_2}\circ \psi_\rho^{-1}$.
Hence, the normalized conformal matings $F_{\Gamma_2}:\overline{\cD_{\Gamma_2}}\to\widehat{\C}$ (of $A_1$ and $A_{\Gamma_2}$) and $F_\Gamma:\overline{\cD_\Gamma}\to\widehat{\C}$ (of $A_1$ and $A_\Gamma$) are quasiconformally conjugate by a global quasiconformal homeomorphism $\psi_\Gamma$ that depends holomorphically on $\Gamma$. 

Let $R_{\Gamma}$ be as in Subsection~\ref{injective_subsec} for a marked group $\Gamma\in\mathrm{Teich}(\Gamma_2)$. We also denote by $\mathfrak{D}_\Gamma$ the Jordan domain satisfying Conditions~\ref{eta_inv_cond},~\ref{univ_cond}, and~\ref{alg_formula}.  
We define $\widehat{\mu}_\Gamma:=R_{\Gamma_2}^*(\mu_\Gamma)$, and note that $\widehat{\mu}_\Gamma$ also depends holomorphically on $\Gamma$.
The relation $F_{\Gamma_2}\circ R_{\Gamma_2}\equiv R_{\Gamma_2}\circ\eta$ (on $\mathfrak{D}_{\Gamma_2}$) and $F_{\Gamma_2}$-invariance of $\mu_\Gamma$ imply that $\widehat{\mu}_\Gamma$ is an $\eta$-invariant Beltrami coefficient. 
Let $\widehat{\psi}_\Gamma$ be a quasiconformal homeomorphism of the sphere solving the Beltrami equation with coefficient $\widehat{\mu}_\Gamma$; i.e., $\widehat{\psi}_\Gamma^*(\mu_0)=\widehat{\mu}_\Gamma$. Then, $\widehat{\psi}_\Gamma\circ\eta\circ\widehat{\psi}_\Gamma^{-1}$ is a M{\"o}bius involution. We normalize $\widehat{\psi}_\Gamma$ so that it sends $\pm 1, \infty$ to $\pm 1,\infty$ (respectively). It then follows that $\widehat{\psi}_\Gamma$ depends holomorphically on $\Gamma$, and conjugates $\eta$ to itself. It is now easy to see that $\psi_\Gamma\circ R_{\Gamma_2}\circ\widehat{\psi}_\Gamma^{-1}$ is a quasiregular map of $\widehat{\C}$ preserving the standard complex structure, and hence is a rational map (see Figure~\ref{qc_deform_fig}). 
Further, the rational map $\psi_\Gamma\circ R_{\Gamma_2}\circ\widehat{\psi}_\Gamma^{-1}$ is injective on $\widehat{\psi}_\Gamma(\mathfrak{D}_{\Gamma_2})$, and we have
$$
F_\Gamma\circ\left(\psi_\Gamma\circ R_{\Gamma_2}\circ\widehat{\psi}_\Gamma^{-1}\right)\equiv \left(\psi_\Gamma\circ R_{\Gamma_2}\circ\widehat{\psi}_\Gamma^{-1}\right)\circ\eta
$$
on $\widehat{\psi}_\Gamma(\mathfrak{D}_{\Gamma_2})$. By the uniqueness statement~\ref{pre_comp} and our normalization of $\widehat{\psi}_\Gamma$, we now have that
$$
\mathfrak{D}_\Gamma=\widehat{\psi}_\Gamma(\mathfrak{D}_{\Gamma_2}),\quad \mathrm{and}\quad R_\Gamma=\psi_\Gamma\circ R_{\Gamma_2}\circ\widehat{\psi}_\Gamma^{-1}.
$$
Thanks to the holomorphic dependence of the quasiconformal homeomorphisms $\psi_\Gamma$ and $\widehat{\psi}_\Gamma$ on $\Gamma$, the rational map $R_\Gamma$ (more precisely, the coefficients of $R_\Gamma$) depend holomorphically as the marked group $\Gamma$ runs over the Bers slice $\mathcal{B}(\Gamma_2)$.  
\end{proof}

\begin{remark}
The group $\Gamma_2$ merely plays the role of a base-point in the proof of Proposition~\ref{holomorphic_prop}; while it is used to put a complex structure on $\mathrm{Teich}(S_{0,n+1})$, the image of the map $\pmb{\Psi}$ does not depend on $\Gamma_2$. In fact, for any $\Gamma_2'\in\mathrm{Teich}(S_{0,n+1})$, the corresponding Bers slices $\mathcal{B}(\Gamma_2)$ and $\mathcal{B}(\Gamma_2')$ are naturally biholormorphic, and hence one can use any such Bers slice $\mathcal{B}(\Gamma_2')$ to equip $\mathrm{Teich}(S_{0,n+1})$ with a complex structure.
\end{remark}

Combining Propositions~\ref{injective_prop} and~\ref{holomorphic_prop}, we readily obtain the following:

\begin{theorem}\label{bers_embedding_thm}
Let $n\geq 2$. There exists an injective holomorphic map 
\begin{align*}
\pmb{\Psi}:\ & \mathrm{Teich}(S_{0,n+1})\to\mathscr{V}\subset\C^{n-1}\\
&\hspace{2cm} \Gamma\mapsto R_\Gamma,
\end{align*}
where $\mathscr{V}$ is the $(n-2)-$dimensional algebraic variety defined by Equation~\eqref{alg_var_eqn}, such that the correspondence $\mathfrak{C}$ defined by Equation~\eqref{corr_eqn} simultaneously uniformizes the (marked) $(n+1)-$times punctured sphere $\D/\Gamma$ and the Hecke orbifold (having one puncture, an order 2 orbifold point and an order $2n$ orbifold point).
\end{theorem}

\noindent We conclude with several questions and comments.

\subsection*{Non-singularity and holomorphic embedding of $\mathrm{Teich}(S_{0,n+1})$ into a space of correspondences}

If $\pmb{\Psi}(\mathrm{Teich}(S_{0,n+1}))$ contains no singular point of $\mathscr{V}$, then it is a complex manifold of dimension $n-2$, and hence the injective holomorphic map $\pmb{\Psi}$ would be a biholomorphism onto its image. To this end, we ask:

\begin{question}
Is the variety $\mathscr{V}$ defined by Equation~\eqref{alg_var_eqn} non-singular? If it is not, does the image of $\pmb{\Psi}$ contains a singular point of $\mathscr{V}$?
\end{question}

\subsection*{Boundedness of Bers slices} 
Classical Bers slices are pre-compact in the $\PSL_2(\C)$-character variety \cite{Ber70}. As mentioned the introduction, holomorphic embeddings of Bers slices of genus zero orbifolds in spaces of algebraic correspondences was constructed in \cite{MM2}, with the property that the correspondences in the image are matings of the corresponding surfaces and the polynomial $z^d$. Pre-compactness of these images was proved in \cite{LMM25}. It is natural to ask the same question in the current setting.

\begin{question}
1) Is the image of the map $\pmb{\Psi}$ bounded in the variety $\mathscr{V}$?  

\noindent 2) Analyze the dynamics of the correspondences lying on the boundary of the image of $\pmb{\Psi}$.
\end{question}

Furthermore, many of the questions posed in \cite[Question~1.2]{LMM25} make sense in the present setting too.

\subsection*{Higher Bowen-Series maps, and algebraic correspondences}
The arguments of Proposition~\ref{conf_mating_exists_prop} can also be used to construct conformal matings of higher Bowen-Series maps and factor Bowen-Series maps of the same degree. However, we do not know if such matings admit algebraic descriptions analogous to the one given in Proposition~\ref{conf_mating_b_inv_prop} (which asserts that the mating of two factor Bowen-Series maps is an algebraic function). We raise the following question:
\begin{question}
  Can the conformal matings of higher Bowen-Series maps and factor Bowen-Series maps (of equal degree) be lifted to produce algebraic correspondences uniformizing a pair of genus zero orbifolds?
\end{question}

\subsection*{Quasi-Fuchsian locus, and boundary correspondences}
In Theorem~\ref{simult_unif_corr_thm}, we consrtucted correspondences that combine genus zero $\Sigma_1, \Sigma_2$ satisfying certain compatibility conditions. While we focussed on the special case where $\Sigma_1$ is the Hecke orbifold in Section~\ref{bers_sec}, one can consider the situation where both $\Sigma_1$ and $\Sigma_2$ have non-trivial Teichm{\"u}ller spaces. In this general setup, the construction of this section would give rise to a holomorphic injection from $\mathrm{Teich}(\Sigma_1)\times\mathrm{Teich}(\Sigma_2)$ into an appropriate space of algebraic correspondences that combine marked surfaces in $\mathrm{Teich}(\Sigma_1)$, $\mathrm{Teich}(\Sigma_2)$. The following problem is of considerable interest.

\begin{question}
1) Prove an analog of Thurston's Double Limit Theorem for correspondences lying in the image of $\mathrm{Teich}(\Sigma_1)\times\mathrm{Teich}(\Sigma_2)$ under the above holomorphic injection.

2) Are there correspondences on the boundary of the image of $\mathrm{Teich}(\Sigma_1)\times\mathrm{Teich}(\Sigma_2)$ (under the above holomorphic injection) that are matings of two singly degenerate groups lying on $\partial \mathrm{Teich}(\Sigma_1)$, $\partial \mathrm{Teich}(\Sigma_2)$?
\end{question}

\end{document}